\newcommand{\bfgreek}[1]{\bm{\@nameuse{up#1}}}
\begin{document}

\title[Hecke $L$-functions]{Notes on the arithmetic of Hecke $L$-functions}

\author{\bf A. Raghuram}
\address{A. Raghuram: Department of Mathematics\\ Fordham University Lincoln Center,  113 West 60th Street,  New York,  NY 10023, USA} 
\email{araghuram@fordham.edu}
\date{\today}
\subjclass{11F67; 11F66, 11F70, 11F75, 20G05, 22E50, 22E55}

\begin{abstract}
This is an expository article that concerns the various related notions of algebraic id\`ele class characters, the {\it Gr\"o\ss encharaktere} of Hecke, 
and cohomological automorphic representations of ${\rm GL}(1)$, all under the general title of algebraic Hecke characters. 
The first part of the article systematically lays the foundations of algebraic Hecke characters. The only pre-requisites are: basic algebraic number theory, familiarity with the adelic language, and basic sheaf theory. Observations that play a crucial role in the arithmetic of automorphic $L$-functions are also discussed. 
The second part of the article, on the ratios of successive critical values of the Hecke $L$-function attached to an algebraic Hecke character, concerns certain variations on a theorem of Harder \cite{harder}, especially drawing attention to a delicate signature that apparently has not been noticed before. 
\end{abstract}

\maketitle


\def\g{\mathfrak{g}}
\def\m{\mathfrak{m}}
\def\k{\mathfrak{k}}
\def\z{\mathfrak{z}}
\def\h{{\mathfrak h}}
\def\gl{\mathfrak{gl}}
\def\p{\mathfrak{p}}
\def\q{\mathfrak{q}}
\def\f{\mathfrak{f}}
\def\a{\mathfrak{a}}
\def\c{\mathfrak{c}}
\def\l{\mathfrak{l}}
\def\i{\mathfrak{i}}
\def\ul{\underline }

\def\Ext{{\rm Ext}}
\def\Hom{{\rm Hom}}
\def\Ind{{\rm Ind}}
\def\aInd{{}^{\rm a}{\rm Ind}}
\def\cm{{\rm cm}}

\def\GL{{\rm GL}}
\def\SO{{\rm SO}}

\def\O{\mathcal{O}}

\def\R{\mathbb{R}}
\def\C{\mathbb{C}}
\def\Z{\mathbb{Z}}
\def\Q{\mathbb{Q}}
\def\A{\mathbb{A}}
\def\I{\mathbb{I}}
\def\J{\mathbb{J}}
\def\U{\mathbb{U}}
\def\P{\mathbb{P}}
\def\bL{\mathbb{L}}

\def\G{\mathcal{G}}
\def\Cl{\mathcal{C}l}
\def\cN{\mathcal{N}}
\def\cG{\mathcal{G}}
\def\cM{\mathcal{M}}
\def\tM{\widetilde{\mathcal{M}}}

\def\w{\wedge}

\def\HC{{\rm HC}}
\def\proj{{\rm proj}}

\def\cH{\mathcal{H}}

\def\to{\rightarrow}
\def\To{\longrightarrow}

\def\1{1\!\!1}
\def\dim{{\rm dim}}

\def\th{^{\rm th}}
\def\isom{\approx}

\def\CE{\mathcal{C}\mathcal{E}}

\def\Crit{{\rm Crit}}
\def\autc{{\rm Aut}(\C)}
\def\log{{\rm log}}
\def\GalQ{{\rm Gal}(\bar\Q/\Q)}
\def\Gal{{\rm Gal}}
\def\Res{{\rm Res}}


\def\bpi{\bfgreek{pi}}


\setcounter{tocdepth}{2}

\newtheorem{thm}[equation]{Theorem}
\newtheorem{cor}[equation]{Corollary}
\newtheorem{lemma}[equation]{Lemma}
\newtheorem{prop}[equation]{Proposition}
\newtheorem{con}[equation]{Conjecture}
\newtheorem{ass}[equation]{Assumption}
\newtheorem{defn}[equation]{Definition}
\newtheorem{rem}[equation]{Remark}
\newtheorem{exer}[equation]{Exercise}
\newtheorem{exam}[equation]{Example}


{\tiny \tableofcontents}

\bigskip
\section{\bf Introduction}

In \cite{weil}
Andr\'e Weil introduced a special class of characters on the group of id\`ele-classes of an algebraic number field $F$ which 
he called characters of type ($A_0$). The importance of these characters are best captured in Weil's words: 
\begin{verse}
`` {\it As to the non-trivial characters of type ($A_0$), some of them arise with the theory of abelian varieties with complex multiplication ... 
Taniyama has proved that the $L$-series attached to the characters of type ($A_0$) belonging to abelian varieties with complex multiplication are 
precisely those which occur in the zeta-functions of such varieties ... all that can be said here is that they tend to emphasize the importance of the 
characters we have discussed and of their remarkable properties.}" 
\end{verse} 
With the hindsight afforded by decades of development, we can now say that the modern arithmetic theory of automorphic forms has 
Weil's landmark 1955 paper as one of its points of origin.

\medskip

In this article, we are primarily concerned with the arithmetic aspects of automorphic forms and $L$-functions on ${\rm GL}(1).$ While working on the special values of automorphic $L$-functions, especially in the context of \cite{raghuram-CM} and \cite{raghuram-tot-imag}, the author was naturally confronted with the various notions connecting Weil's characters of type ($A_0$),  the {\it Gr\"o\ss encharaktere} of Hecke, and cohomological automorphic representations of ${\rm GL}(1)$, all under the general title of algebraic Hecke characters. 
The idea of writing these notes is so that anyone with a similar interest in the arithmetic theory of $L$-functions has a ready reference at hand with all the necessary details. There is nothing new in the first five sections other than a reorganization of one's thoughts to wade through the various dictionaries; this part of the article is based upon the references: Deligne \cite{deligne-sga}, \cite{deligne-corvallis}, 
Harder \cite{harder}, Hida \cite{hida-duke}, 
Neukirch \cite{neukirch}, Serre \cite{serre}, Schappacher \cite{schappacher}, 
Waldspurger \cite{waldspurger}, and Weil \cite{weil}. Also included are observations that play a crucial role in the study of 
special values of automorphic $L$-functions. 
The only pre-requisites to read the first part are basic algebraic number theory, the 
language of ad\`eles and id\`eles (for example, the first seven chapters of Weil's book \cite{weil-book} are sufficient), and some basic theory of sheaf cohomology.

\medskip

The last section (Sect.\,\ref{sec:harder-hecke}), which concerns the special values of Hecke $L$-functions, is conceptually and technically much deeper.  
In a pioneering paper, G\"unter Harder \cite{harder} laid the foundations of 
Eisenstein cohomology for $\GL(2)$ and as a consequence proved a rationality result on the ratios of critical values of Hecke $L$-functions. 
To give an idea of the shape of this result 
let $E$ and $F$ be number fields and $\chi$ an algebraic Hecke character of $F$ with coefficients in $E$. For an embedding of fields 
$\iota : E \to \C$ consider the $\C$-valued Hecke $L$-function $L(s, {}^\iota\chi)$. Suppose $m \in\Z$ is such that both $m$ and $m+1$ are {\it critical} for 
$L(s, {}^\iota\chi)$. This condition restricts $F$ to be a totally imaginary field that contains a maximal CM subfield. 
Let $\delta_{F/\Q}$ denote the absolute discriminant of $F$. 
Harder proved (\cite{harder}) that the complex number 
$|\delta_{F/\Q}|^{1/2}  \frac{L(m, {}^\iota\chi)}{L(m+1, {}^\iota\chi)}$
is algebraic (see \eqref{eqn:harder-1}) 
and satisfies a reciprocity law that it is equivariant under the Galois group $\Gal(\bar\Q/\Q)$ of $\Q$ (see \eqref{eqn:harder-2}); it follows then 
that the algebraicity statement can be strengthened to assert that the above quantity is in $\iota(E)$ (see \eqref{eqn:harder-3}).  
The {\it raison d'\^etre} for 
writing this article is that whereas the preliminary algebraicity result \eqref{eqn:harder-1} is correct, the reciprocity law \eqref{eqn:harder-2} is not correct as it stands by producing an example (see Sect.\,\ref{sec:counterexample}) when $F$ is a totally imaginary field that is not of CM type that gives a counterexample to 
\eqref{eqn:harder-3}. The real aim of this article is to state the correct reciprocity law which is stated in Thm.\,\ref{thm:main}. 
The missing signature, which is the term $\varepsilon_{{\bf n}, \iota}(\varsigma) \cdot 
\varepsilon_{\tilde{\bf n}, \iota}(\varsigma)$ in Thm.\,\ref{thm:main}, needed to rectify the reciprocity law is rather complicated, and turns out to be trivial when
$F$ is a CM field, and can be nontrivial for a general totally imaginary field. The presence of this subtle signature can be finessed using, interestingly enough, some other passages in \cite{harder}.  
Sect.\,\ref{sec:harder-hecke} is an exposition of some results in my paper \cite{raghuram-tot-imag} for Rankin--Selberg $L$-functions for $\GL_n \times \GL_{n'}$ over a totally imaginary base field 
in the simple situation of $n = n' = 1.$ 

\medskip

That there is a signature missing was suggested by calculations due to Pierre Deligne (see Prop.\,\ref{prop:deligne}),  
in the context of motivic $L$-functions, carried out from the point of view afforded by his celebrated conjecture--which is recalled in Conj.\,\ref{con:deligne} in the context at hand. 
After relating the missing signature  to the signature predicted by Deligne, the main theorem is restated as a much cleaner looking 
Thm.\,\ref{thm:main-restated}.  

\medskip
As a final comment to end the introduction, the results of this article show that whereas the analytic theory of $L$-functions of automorphic forms on $\GL(1)$ is not sensitive to the inner structure of the base field, the arithmetic theory of $L$-functions concerning the special values of $L$-functions crucially depends on the arithmetic structure of the base field. 

\medskip

{\small
{\it Acknowledgements:} 
I thank Pierre Deligne, G\"unter Harder, and Haruzo Hida for conversations and email correspondence on arithmetic aspects of Hecke characters and special values of $L$-functions; some of these discussions have found their way into this article. I acknowledge support from a MATRICS grant MTR/2018/000918 of the Science and Engineering Research Board, Department of Science and Technology, Government of India.}

\bigskip
\section{\bf Hecke characters}

We set up some general notation for this article: 

\begin{enumerate}
\item[] $\Q$ is the rational number field, with closure $\bar{\Q}$ inside complex numbers $\C$;
\item[] $F$ is a number field, i.e., a finite extension of $\Q$; \\
we do not assume that $F$ is contained in $\bar{\Q} \subset \C$; 
\item[] $d_F = d = [F:\Q]$;
\item[] $\O_F$ is the ring of integers of $F$; 
\item[] $U_F = \O_F^\times$ is the group of units of $\O_F$;
\item[] $\Sigma_F = {\rm Hom}(F,\C)$, all embeddings of $F$ into $\C$; note: ${\rm Hom}(F,\bar\Q) = {\rm Hom}(F,\C);$
\item[] $S_\infty$ = set of all archimedean places of $F$;
\item[] $S_\infty = S_r \cup S_c$ the real and complex places, respectively; 
\item[] $r_1 = \#S_r,$ $r_2 = \#S_c$; $d_F = r_1 + 2r_2;$
\item[] $\p$ a finite prime ideal of $\O_F$ or an infinite place;
\item[] $F_\p$ the completion of $F$ at $\p;$
\item[] $\O_\p$ the ring of integers of $F_\p$ at a finite prime $\p;$ 
\item[] $\varpi_\p$ is a uniformizer at a finite prime $\p;$ hence $\p \O_\p = \varpi_\p \O_\p;$ 
\item[] $U_\p = \O_\p^\times = \O_\p - \varpi_\p\O_\p$ is the group of units of $\O_\p$ for a finite prime $\p.$
\end{enumerate}

\medskip
\subsection{Class groups}
We start with some preliminaries on class groups, towards which we need some more notation: 
\begin{enumerate}
\item[] $\J_F$ is the group of all fractional ideals of $F;$
\item[] $\P_F$ is the group of all principal fractional ideals $(x) = x\O_F$ for $x \in F^\times;$  
\item[] $x \gg 0$ means that $x \in F$ is totally positive, i.e., $\rho(x) > 0$ for all $\rho \in \Hom(F,\R)$;
\item[] $\P_F^+ = \{ (x) \in \P_F \ : \ x \gg 0\};$
\item[] $\Cl_F = \J_F/\P_F$ is the class group of $F$;
\item[] $\Cl_F^+= \J_F/\P_F^+$ is the narrow class group of $F$.
\end{enumerate}
The adjective `narrow' is to suggest narrow conditions in $\P_F^+$ that we divide by to get $\Cl_F^+$; the narrow class group surjects onto the class group; we have the exact sequence: 
\begin{equation}
0 \longrightarrow \frac{\P_F}{\P_F^+} 
\longrightarrow \frac{\J_F}{\P_F^+} 
\longrightarrow \frac{\J_F}{\P_{F}} \longrightarrow 0. 
\end{equation}
Observe that 
$$
\frac{\P_F}{\P_F^+}  \ \simeq \ \frac{F^\times/U_F}{F^\times_+/U_F^+} \ \simeq \ 
\frac{F^\times}{F^\times_+ U_F}, 
$$
where $F^\times_+ = \{x \in F^\times  : x \gg 0\}$ and $U_F^+ = U_F \cap F^\times_+$ is the group of totally positive units. 
We have an exact sequence: 
\begin{equation}
0 \longrightarrow \frac{U_F}{U_F^+} 
\longrightarrow \frac{F^\times}{F^\times_+} \longrightarrow
\frac{F^\times}{F^\times_+ U_F} \longrightarrow 0. 
\end{equation}
Splicing the two exact sequences, we get a four-term exact sequence relating the narrow class group to the class group: 
\begin{equation}\label{eqn:4-term-ideal}
0 \longrightarrow \frac{U_F}{U_F^+} 
\longrightarrow \frac{F^\times}{F^\times_+} \longrightarrow
\Cl_F^+\longrightarrow \Cl_F \longrightarrow 0. 
\end{equation}
The narrow class group surjects onto the class group and the ratio of their orders is a power of $2$ since 
$[F^\times:F^\times_+] = 2^{r_1}.$ (This suggests an interesting exercise in basic number theory: 
given $r_1, t_1 \in \Z,$ with $r_1 \geq 1$ and $0 \leq t_1 \leq r_1$, construct a number field $F$ with exactly $r_1$ real embeddings and such that the order of the group $U_F/U_F^+$ is $2^{t_1},$ equivalently, the kernel of $\Cl_F^+\rightarrow \Cl_F$ is $2^{r_1-t_1}.$)

\medskip

To discuss the class groups in terms of id\`eles, here are some more notations: 
\begin{enumerate}
\item[] $\A_F$ is the ad\`ele ring of $F;$
\item[] $\I_F$ is the group of id\`eles of $F$;
\item[] $C_F = \I_F/F^\times$ is the group of id\`ele-classes of $F$; 
\item[] $U_\p := \O_\p^\times$ for finite $\p$ as before, but moreover \\ 
$U_\p := \R^\times_+$ for $\p \in S_r,$ and 
$U_\p := \C^\times$ for $\p \in S_c;$ 
\item[] $\I_F^S$ = group of $S$-id\`eles for any finite set $S$ of places; 
\item[] $F_\infty \ := \ F \otimes \R \ \simeq \ 
\prod_{v \in S_r} F_v  \times \prod_{w \in S_c} F_w \ \simeq \ 
\prod_{v \in S_r} \R \times \prod_{w \in S_c} \C;$
\item[] $F_{\infty +} = \{ x = (x_\lambda) \in F_\infty \ : \ x_v > 0, \ \forall v \in S_r \}.$
\end{enumerate}
There is a canonical map 
$\bfgreek{iota} : \I_F \to \J_F$
which sends an id\`ele $a = (a_\p)$ to the ideal $\bfgreek{iota}(a) :=  \prod_{\p \notin S_\infty} \p^{{\rm ord}_\p(a_\p)}.$ It is clear that
$
{\rm Kernel}(\bfgreek{iota}) = F_\infty^\times \prod_{\p \notin S_\infty} U_\p.
$
The map $\bfgreek{iota}$ induces an isomorphism which gives an id\`ele-theoretic description of the class group of $F$:
\begin{equation}
\frac{\I_F}{F^\times(F_\infty^\times \prod_{\p \notin S_\infty} U_\p)} \  \simeq \ \frac{\J_F}{\P_F} \ = \ \Cl_F. 
\end{equation}
Similarly, one has for the narrow class group: 
$$
\frac{\I_F}{F_+^\times(F_\infty^\times \prod_{\p \notin S_\infty} U_\p)} \ \simeq \ \frac{\J_F}{\P_F^+} \ = \ \Cl_F^+. 
$$
Weak-approximation gives that $F^\times$ is dense in $F_{\infty}^\times$; hence 
$F_+^\times F_\infty^\times = F^\times F_{\infty +}^\times;$ whence: 
\begin{equation}
\label{eqn:ray-class-group}
\frac{\I_F}{F^\times(F_{\infty +}^\times \prod_{\p \notin S_\infty} U_\p)} \ \simeq \ \frac{\J_F}{\P_F^+} \ = \ \Cl_F^+. 
\end{equation}

The four-term long exact sequence relating the class group and the narrow class group described using id\`eles takes the form: 
\begin{equation}\label{eqn:4-term-idele}
0 \longrightarrow \frac{U_F}{U_F^+} 
\longrightarrow \frac{F_\infty^\times}{F_{\infty +}^\times} \longrightarrow
\frac{\I_F}{F^\times(F_{\infty +}^\times \prod_{\p \notin S_\infty} U_\p)} 
\longrightarrow 
\frac{\I_F}{F^\times(F_\infty^\times \prod_{\p \notin S_\infty} U_\p)}  \longrightarrow 0. 
\end{equation}
To compare the second term from the left in (\ref{eqn:4-term-ideal}) with that in (\ref{eqn:4-term-idele}) we note that weak-approximation says that the canonical map 
$F^\times/F_+^\times \to F_\infty^\times/F^\times_{\infty +}$ 
induced by the diagonal inclusion $F^\times \hookrightarrow F_\infty^\times$ 
is an isomorphism.

\medskip
\subsection{Basics of Hecke characters} 
A Hecke character is a continuous homomorphism
$$
\chi \ : \ \I_F/F^\times \ \to \ \C^\times.
$$
We do not ask $\chi$ to be unitary. Some people prefer calling a unitary Hecke character as a Hecke character, and what we call a Hecke character, as a Hecke quasi-character. It is painful to keep writing `quasi-character' all the time, and so, with a possible abuse of terminology, we will work with the above definition. This is not so serious as we now explain: Consider the norm of an id\`ele $\alpha \in \I_F$ defined as $|\!| \alpha |\!| := 
\prod_\p |\alpha_\p|_\p$, where $\p$ runs over all valuations (finite and archimedean), and each valuation is normalized. 
Then $|\!| \ |\!|: \I_F \to \R^\times_+$ is a surjective homomorphism and let ${}^0\I_F$ denote its kernel. The product formula says that $|\!| a |\!| = 1$ for all $a \in F^\times$, i.e., 
$F^\times \subset {}^0\I_F.$ We have the following exact sequence: 
$$
0 \longrightarrow \frac{{}^0\I_F}{F^\times} \longrightarrow \frac{\I_F}{F^\times} \longrightarrow \R^\times_+ 
\longrightarrow 0.
$$
This sequence splits. There are several splittings; for example: map $t \in \R^\times_+$ into the id\`ele which is $t$ at a particular real infinite place and $1$ elsewhere, or map it to $t^{1/2}$ at a particular complex infinite place and $1$ elsewhere. But this depends on a choice of an infinite place. Instead, using $d = d_F = [F:\Q]$, 
we will always consider the following splitting: 
$$
t \mapsto (t^{1/d},\dots,t^{1/d}; 1,1,\dots), 
$$
where the id\`ele has the positive $d$-th root of $t$ at every infinite place and $1$ at every finite place. (Note that it is indeed a splitting, because for any complex place, the normalized valuation satisfies $|x|_\C = |x|_\R^2$ for $x \in \R.$) This splitting gives
$$
\I_F/F^\times \ \simeq \ {}^0\I_F/F^\times \times \R^\times_+.
$$
It is a fundamental fact that ${}^0\I_F/F^\times$ is compact (Neukirch~\cite[Theorem VI.1.6]{neukirch}). 
A continuous homomorphism of ${}^0\I_F/F^\times$ into $\C^\times$ has compact image and so lands in $S^1.$  
Further, any homomorphism $\R^\times_+ \to \C^\times$ is of the form $x \mapsto |x|^w$ for a complex number 
$w = \sigma + i \varphi$. (See \ref{sec:character-r} below.) Putting these remarks together, 
any Hecke character $\chi$ can be uniquely factored as  
\begin{equation}
\label{eqn:hecke-unitary}
\chi \ = \ {}^0\chi \otimes |\!| \ |\!|^{\sigma}
\end{equation}
for a unitary Hecke character ${}^0\chi : \I_F/F^\times \to S^1$ and $\sigma \in \R;$ sometimes ${}^0\chi$ is denoted also as $\chi^u.$

\medskip
\subsection{Dirichlet characters: Hecke characters of finite order}

A continuous homomorphism $\chi : \I_F/F^\times \rightarrow \C^\times$
with finite image and which is unramified everywhere (i.e., $\chi_\p := \chi|_{F_\p^\times}$ is trivial on the units $U_\p$ for all finite $\p$) gives a  
character of $\frac{\I_F}{F^\times(F_{\infty +}^\times \prod_{\p \notin S_\infty} U_\p)},$ and by \eqref{eqn:ray-class-group}, is a character of the 
narrow class group $\Cl_F^+.$ This generalizes by introducing some level structure giving us a description of Hecke characters of finite order in terms of 
characters of narrow ray class groups with level structure. Here is some more notation: 
\begin{enumerate}
\smallskip
\item[] $\m$ will be an integral ideal with prime-factorization $\m = \prod_{\p \notin S_\infty} \p^{m_\p};$

\smallskip
\item[] $U_\p(m_\p)$ will be defined as: 
$$
U_\p(m_\p) = \left\{\begin{array}{ll} 
1 + \p^{m_\p}, & {\rm if} \ \p \notin S_\infty, \  \p \, | \, \m, \\
U_\p, & {\rm if} \ \p \notin S_\infty, \  \p \not| \, \m, \\ 
\R^\times_+, & {\rm if} \  \p \in S_r, \\
\C^\times, & {\rm if} \  \p \in S_c.
\end{array}\right.
$$
\item[] $\U_F(\m) := \prod_\p U_\p(m_\p);$ 

\smallskip

\item[] $\U_{F,f}(\m) := \prod_{\p \notin S_\infty} U_\p(m_\p);$

\smallskip

\item[] $C_F(\m) := \U_F(\m)F^\times/F^\times$ is called the congruence subgroup mod $\m$ in $C_F;$ 

\smallskip

\item[] $C_F/C_F(\m) := \I_F/\U_F(\m)F^\times$ is the id\`ele-theoretic narrow ray class group mod $\m$;

\smallskip

\item[] $\J_F(\m)$ is the group of all fractional ideals relatively prime to $\m;$

\smallskip

\item[] $\P_F(\m)$ is the group of all principal fractional ideals $(x)$ with $x \equiv 1 \pmod{\m}.$ 

\smallskip

\item[] $\P_F^+(\m)$ is the subgroup of $\P_F(\m)$ consisting of those $(x)$ with $x \gg 0;$ 

\smallskip

\item[] $\Cl_F(\m) := \J_F(\m)/\P_F(\m)$ is the ray class group mod $\m$;

\smallskip

\item[] $\Cl_F^+(\m) := \J_F(\m)/\P_F^+(\m)$ is the narrow ray class group mod $\m$. 
\end{enumerate}

\medskip
\begin{prop}
\label{prop:narrow-ray-class-gp-mod-m}
The canonical homomorphism $w : \I_F \to \J_F$ induces an isomorphism:
$$
\frac{C_F}{C_F(\m)} \ \simeq \ \frac{\J_F(\m)}{\P_F^+(\m)}.
$$
\end{prop}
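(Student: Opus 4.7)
The plan is to realize both sides of the desired isomorphism via a common group of ``preferred idèle representatives.''  Specifically, introduce the subgroup
$$
\I_F^{(\m)} \ := \ \prod_{\p \mid \m} U_\p(m_\p) \ \times \prod_{\substack{\p \nmid \m \\ \p \notin S_\infty}} F_\p^\times \ \times \ F_{\infty +}^\times,
$$
that is, the idèles which are $\equiv 1 \pmod{\p^{m_\p}}$ at primes dividing $\m$ and totally positive at the real places. I would then run the argument in the following four steps.

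First, analyze the canonical map $w$ restricted to $\I_F^{(\m)}$. By construction, for $a\in\I_F^{(\m)}$ one has $\mathrm{ord}_\p(a_\p)=0$ for all $\p\mid\m$, so $w(a)\in \J_F(\m)$. The restriction
$$
w \ : \ \I_F^{(\m)} \ \To \ \J_F(\m)
$$
is surjective: given $\a=\prod_\p\p^{n_\p}\in\J_F(\m)$, lift it by $a_\p = \varpi_\p^{n_\p}$ for finite $\p\nmid\m$ and $a_\p=1$ elsewhere. Its kernel is exactly $\U_F(\m)$, since requiring $\mathrm{ord}_\p(a_\p)=0$ for all finite $\p$ together with membership in $\I_F^{(\m)}$ yields $a_\p\in U_\p(m_\p)$ at every finite place and $a\in F_{\infty+}^\times$ at infinity.

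Second, invoke weak approximation to show $\I_F = F^\times\cdot \I_F^{(\m)}$. Given $a\in\I_F$, one finds $x\in F^\times$ that is $\p$-adically close to $a_\p^{-1}$ for each $\p\mid\m$ (closer than $|a_\p^{-1}|_\p\cdot|\varpi_\p^{m_\p}|_\p$) and of the same sign as $a_v$ at each real $v$; this is exactly the statement that $F^\times$ is dense in the product $\prod_{\p\mid\m}F_\p^\times\times \prod_{v\text{ real}}F_v^\times$. Then $xa\in \I_F^{(\m)}$ and $a = x^{-1}(xa)$.

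Third, use the second isomorphism theorem. Since $\U_F(\m)\subseteq \I_F^{(\m)}$, one has $\I_F^{(\m)}\cap F^\times\U_F(\m) = \bigl(F^\times\cap \I_F^{(\m)}\bigr)\,\U_F(\m)$, and straight from the definitions
$$
F^\times \cap \I_F^{(\m)} \ = \ \{\, y \in F^\times \ : \ y \equiv 1 \pmod{\m},\ y \gg 0\,\}.
$$
Combining with Step 2,
$$
\frac{C_F}{C_F(\m)} \ = \ \frac{\I_F}{F^\times\U_F(\m)} \ = \ \frac{F^\times\I_F^{(\m)}}{F^\times\U_F(\m)} \ \simeq \ \frac{\I_F^{(\m)}}{(F^\times\cap\I_F^{(\m)})\,\U_F(\m)}.
$$

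Finally, push this through $w$: Step 1 identifies $\I_F^{(\m)}/\U_F(\m)\simeq \J_F(\m)$, and under this identification $F^\times\cap\I_F^{(\m)}$ maps onto the subgroup of principal ideals $(y)$ with $y \gg 0$ and $y\equiv 1\pmod{\m}$, which is precisely $\P_F^+(\m)$. This yields the claimed isomorphism. The only non-formal ingredient is weak approximation used in Step 2; everything else is bookkeeping with exact sequences and the second isomorphism theorem, so that is where I would expect a careful reader to want the most detail.
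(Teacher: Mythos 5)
Your proposal is correct and follows essentially the same route as the paper: the subgroup $\I_F^{(\m)}$ you introduce is exactly the paper's $\I_F(\m)$, and the argument proceeds identically via weak approximation, the identification of the kernel of $\bfgreek{iota}$ on this subgroup with $\U_F(\m)$, and the image of $F^\times\cap\I_F(\m)$ with $\P_F^+(\m)$. The only difference is that you spell out the bookkeeping (the modular-law/second-isomorphism step and the explicit approximation estimates) that the paper dismisses with ``the rest is easy.''
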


\begin{proof}
Define:
$$
\I_F(\m) \ := \ \{\alpha \in \I_F \ : \ \alpha_\p \in U_\p(m_\p),  \ \forall \p |\m\cdot \infty \}.
$$
Weak approximation implies: $\I_F = F^\times \cdot \I_F(\m).$ Hence 
$$
C_F = \I_F/F^\times = \I_F(\m)/(F^\times \cap \I_F(\m)). 
$$
Note that $\bfgreek{iota}$ maps $\I_F(\m)$ onto $\J_F(\m)$ and similarly, maps $F^\times \cap \I_F(\m)$ onto $\P_F^+(\m).$ Further, 
${\rm Ker}\left(\bfgreek{iota} : \I_F(\m) \to \J_F(\m)\right) \ = \ \U_F(\m)$. The rest is easy.
\end{proof}

A fundamental property of these congruence subgroups $C_F(\m)$ is captured by

\begin{prop}
\label{prop:finite-index-subgp}
For any integral ideal $\m$, the congruence subgroup $C_F(\m)$ is a subgroup of the id\`ele class group $C_F$ of finite index. Conversely, any subgroup of finite index of $C_F$ contains $C_F(\m)$ for some $\m.$ 
\end{prop}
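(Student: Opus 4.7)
The two assertions rely on quite different ingredients. For the first, the quickest route is through Proposition~\ref{prop:narrow-ray-class-gp-mod-m}: the isomorphism $C_F/C_F(\m) \simeq \Cl_F^+(\m)$ reduces finiteness of the index to the classical finiteness of the narrow ray class group, which in turn follows from the finiteness of $\Cl_F^+$ together with the standard exact sequence expressing $\Cl_F^+(\m)$ as an extension of $\Cl_F^+$ by a quotient of the finite group $(\O_F/\m)^\times \times \{\pm 1\}^{r_1}$ modulo the image of global units. A more topological proof bypasses this reduction: $\U_F(\m) = \prod_\p U_\p(m_\p)$ is visibly an open subgroup of $\I_F$ since $U_\p(m_\p) = U_\p$ for all $\p$ not dividing $\m$, so $C_F(\m)$ is open in $C_F$. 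The splitting $\R^\times_+ \to \I_F \to C_F,\ t \mapsto (t^{1/d}, \ldots, t^{1/d}; 1, 1, \ldots)$ lands inside $C_F(\m)$ (the entries $t^{1/d}$ lie in $\R^\times_+ = U_v$ at real places and in $\C^\times = U_w$ at complex places, while the finite coordinates $1$ lie in every $U_\p(m_\p)$), so $C_F/C_F(\m)$ is a quotient of $\,^0\I_F/F^\times$; the latter is compact by Neukirch \cite[Thm.\,VI.1.6]{neukirch}, while the quotient is also discrete because $C_F(\m)$ is open, forcing $C_F/C_F(\m)$ to be finite.

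For the converse, let $H \subseteq C_F$ be a subgroup of finite index. Since closed finite-index subgroups of a topological group are automatically open (the complement of a closed subgroup of finite index is a finite union of closed cosets, so the two notions coincide), one may take $H$ open, which is the setting of interest in class field theory. Let $\tilde H := \pi^{-1}(H) \subseteq \I_F$, an open subgroup containing $F^\times$. The plan is to exhibit an integral ideal $\m$ with $\U_F(\m) \subseteq \tilde H$; once done, $\tilde H \supseteq \U_F(\m) F^\times$, and pushing down yields $H \supseteq C_F(\m)$. Openness of $\tilde H$ furnishes a basic neighborhood $V = \prod_\p V_\p$ of $1$ with each $V_\p$ open in $F_\p^\times$ and $V_\p = U_\p$ for $\p$ outside some finite set $S$.

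At each infinite $\p$, the group $U_\p$ (namely $\R^\times_+$ or $\C^\times$) is connected and contains $1$, hence lies in the connected component of $1$ in $\I_F$ and therefore in every open subgroup; thus $\tilde H$ automatically contains $U_\p = U_\p(m_\p)$ at infinite places. At each finite $\p$, $\tilde H \cap F_\p^\times$ is open and contains $V_\p$; since $\{1 + \p^k\}_{k \geq 1}$ is a fundamental system of open neighborhoods of $1$ in $U_\p$, we may pick $m_\p \geq 0$ with $U_\p(m_\p) \subseteq V_\p$, choosing $m_\p = 0$ for $\p \notin S$. Setting $\m := \prod_\p \p^{m_\p}$ — a genuine integral ideal, as almost all exponents vanish — yields $\U_F(\m) = \prod_\p U_\p(m_\p) \subseteq \tilde H$, finishing the proof. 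The main obstacle is really just the careful topological bookkeeping, together with the observation that the $\U_F(\m)$ form a fundamental system of open neighborhoods of $1$ in $\I_F$ as $\m$ varies; once this dictionary is in hand, the verification is essentially routine.
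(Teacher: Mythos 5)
The paper offers no argument of its own here --- it simply cites Neukirch~\cite[Prop.\,VI.1.8]{neukirch} --- so your writing out a proof is a genuinely different (and more informative) route. Both of your arguments for the finiteness of $[C_F : C_F(\m)]$ are sound: the reduction to $\Cl_F^+(\m)$ via Prop.\,\ref{prop:narrow-ray-class-gp-mod-m}, and the topological argument that $C_F(\m)$ is open, absorbs the chosen splitting of $|\!|\ |\!|$, and hence has compact discrete (so finite) quotient. The verification that an open subgroup of $\I_F$ containing $F^\times$ must contain some $\U_F(\m)$ (connectedness at the archimedean places, the neighborhood basis $\{1+\p^k\}$ at the finite places, $m_\p = 0$ outside $S$) is also correct.

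There is, however, one genuine gap, in the sentence ``one may take $H$ open.'' What you actually prove is that \emph{closed} finite-index subgroups are open; you never justify why an arbitrary finite-index subgroup may be assumed closed, and in fact it may not be. The group $C_F$ surjects onto $\Gal(F^{\rm ab}/F)$, which in turn surjects onto an infinite product of copies of $\Z/2$ (infinitely many independent quadratic extensions); that product has index-$2$ subgroups that are not open, and their preimages in $C_F$ are finite-index subgroups containing no $C_F(\m)$ whatsoever (any subgroup containing an open subgroup is open). So the converse, read literally for arbitrary finite-index subgroups, is false; the correct statement --- and the one Neukirch actually proves in Prop.\,VI.1.8 --- restricts to \emph{closed} subgroups of finite index. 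Your proof is a correct proof of that correct statement, but you should make the closedness hypothesis explicit at the outset rather than smuggling it in via the clause ``which is the setting of interest in class field theory''; as written, the reduction is a non sequitur. Everything after that point is fine.
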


\begin{proof}
See Neukirch~\cite[Proposition VI.1.8]{neukirch}. 
\end{proof}

The moral of the above discussion is that Hecke characters of finite order are exactly the characters of narrow ray class groups: 

\begin{cor}[Finite order Hecke characters]
\label{cor:fin-order-hecke}
Consider a continuous homomorphism 
$$
\chi \ : \ \I_F/F^\times \  \longrightarrow \  \C^\times
$$
whose image is of finite order. Then there is an integral ideal $\m$ such that $\chi$ factors as
$$
\xymatrix{
\I_F/F^\times = C_F \ar[rr]^{\chi} \ar[rd] & & \C^\times \\
 & C_F/C_F(\m) \ar[ru]
}$$
The smallest such $\m$ is called the conductor of $\chi$ and will be denoted $\f_\chi.$ 
\end{cor}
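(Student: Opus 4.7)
The plan is to first establish the existence of some $\m$ via Proposition~\ref{prop:finite-index-subgp}, and then extract the conductor by analyzing the triviality condition prime by prime.

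For existence, since $\chi$ has finite image in $\C^\times$, its kernel $\ker(\chi)$ is an open subgroup of finite index in $C_F = \I_F/F^\times$. By Proposition~\ref{prop:finite-index-subgp}, this kernel must contain $C_F(\m)$ for some integral ideal $\m$, which is precisely the statement that $\chi$ factors through $C_F/C_F(\m)$.

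For the existence of a \emph{smallest} such $\m$ I would work place by place. Because $\U_F(\m) = \prod_\p U_\p(m_\p)$ is a restricted direct product, the condition $\chi|_{\U_F(\m)} \equiv 1$ is equivalent to the family of local conditions $\chi_\p|_{U_\p(m_\p)} \equiv 1$ over all places $\p$. At an archimedean $\p$, the group $U_\p$ (either $\R^\times_+$ or $\C^\times$) is connected while $\chi_\p$ has finite image in $\C^\times$, so $\chi_\p|_{U_\p}$ is automatically trivial and archimedean places impose no constraint on $\m$. At a finite prime $\p$, continuity of $\chi_\p$ together with the finiteness of its image, combined with the fact that $\{U_\p(m)\}_{m \geq 0}$ is a fundamental system of neighbourhoods of $1$ in $F_\p^\times$, yields a smallest nonnegative integer $m_\p^\ast$ such that $\chi_\p|_{U_\p(m_\p^\ast)} \equiv 1$. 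The existence statement already proved forces $m_\p^\ast = 0$ for all but finitely many $\p$, so $\f_\chi := \prod_\p \p^{m_\p^\ast}$ is a well-defined integral ideal.

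It remains to check that $\f_\chi$ is minimal in the divisibility order. By construction $\chi$ is trivial on $\U_F(\f_\chi)$; since $\chi$ is also trivial on $F^\times$, it is trivial on $C_F(\f_\chi) = \U_F(\f_\chi) F^\times / F^\times$, so $\chi$ factors through $C_F/C_F(\f_\chi)$. Conversely, any $\m = \prod_\p \p^{m_\p}$ through which $\chi$ factors satisfies $\chi_\p|_{U_\p(m_\p)} \equiv 1$ at every finite $\p$, whence $m_\p^\ast \leq m_\p$ by minimality of $m_\p^\ast$, and therefore $\f_\chi$ divides $\m$. The only nontrivial input is Proposition~\ref{prop:finite-index-subgp} (cited from Neukirch); once it is granted, the rest is bookkeeping enabled by the direct-product structure of $\U_F(\m)$, and I do not anticipate a genuine obstacle.
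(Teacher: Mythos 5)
Your argument is correct and follows the route the paper intends: the kernel of $\chi$ is an open subgroup of finite index in $C_F$, so Prop.\,\ref{prop:finite-index-subgp} gives the existence of $\m$, and the place-by-place analysis you add (archimedean factors connected hence no constraint, a minimal local exponent $m_\p^\ast$ at each finite prime by the no-small-subgroups argument, equal to $0$ for almost all $\p$) is exactly how the paper later pins down the conductor in its subsection on local components. The only detail worth making explicit is that triviality of $\chi$ on each factor $U_\p(m_\p)$ implies triviality on the full product $\U_F(\m)$ because the restricted direct sum is dense there and $\chi$ is continuous.
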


\medskip
A Dirichlet character of $F$ is a (necessarily unitary) character of a narrow ray class group of $F$ with level structure, 
i.e., it is a homomorphism 
$$
\chi : \J_F(\m)/\P_F(\m)^+ \ \to \ S^1,
$$
for some integral ideal $\m.$ By Prop.\,\ref{prop:narrow-ray-class-gp-mod-m}, Prop.\,\ref{prop:finite-index-subgp}, and Cor.\,\ref{cor:fin-order-hecke} it follows 
that Dirichlet characters are exactly the Hecke characters of finite order. The algebraic Hecke characters that we want to understand are in general not of finite order. 

\medskip
An easy exercise is to take $F = \Q$, $\m = N\Z$ to see that a Dirichlet character for $\Q$ modulo $N\Z$ is exactly a homomorphism 
$(\Z/N\Z)^\times \to \C^\times;$ such characters were considered by Dirichlet in his proof of infinitude of primes in arithmetic progressions; this exercise justifies the terminology.

\medskip
\subsection{Local components at finite places and conductor}

Fix any place $\p$ of $F$. Then $F_\p^\times$ embeds into $\I_F$ as $x_\p \mapsto (1,\dots, 1, x_\p,1\dots),$ 
the idele having $x_\p$ at the place corresponding to $\p$ and $1$ elsewhere. 
The restriction $\chi_\p$ of $\chi$ to this copy of $F_\p^\times$ is called the local character at $\p$. Sometimes we use $v$ for a place instead of $\p.$ 
For almost all finite $\p$, the character $\chi_\p$ is unramified, i.e., it is trivial on $U_\p$. This may be seen by considering the restriction of $\chi$ to $\prod_{\p \notin S_\infty} U_\p$, a compact and totally disconnected group the image of which under a continuous homomorphism into $\C^\times$ must be finite by the so-called `no small-subgroups argument'; see, for example, Bump~\cite[Exercise 3.1.1]{bump}. One can write $\chi = \otimes'_v \chi_v$ as a restricted tensor product. 
By the same token $\chi$ is trivial on $\U_{F,f}(\m)$ for some integral ideal $\m.$
The smallest ideal $\m$ such that $\chi$ is trivial on $\U_{F,f}(\m)$ is called conductor of $\chi$ and 
will be denoted $\f_\chi.$

\medskip
\subsection{The character at infinity of a Hecke character}

\subsubsection{Characters of $\R^\times$} \label{sec:character-r}

Any continuous homomorphism  $\chi : \R^\times \to \C^\times$ is of the form 
$$
\chi(x) \ = \ {\rm sgn}(x)^n \, |x|^{w} \ = \ \left(\frac{x}{|x|}\right)^n \, |x|^{w}, 
$$
with $n \in \{0,1\}$ and $w \in \C.$ Such a character $\chi$ is unitary if and only if $w = \i \varphi \in \i \R.$

\subsubsection{Characters of $\C^\times$}
For $z = x+\i y \in \C$, define $|z| := |z|_\R := \sqrt{x^2 + y^2},$ and the normalised valuation on 
$\C$ as $|z|_\C := |z|_\R^2 = x^2 + y^2.$ A continuous homomorphism  $\chi : \C^\times \to \C^\times$ is of the form 
$$
\chi(z)  \ = \ \left(\frac{z}{|z|}\right)^n \, |z|_\C^{w}, 
$$
with $n \in \Z$ and $w \in \C.$ If $z = r e^{\i \theta}$ with $r \in \R^\times_+$ and $\theta \in \R$, then $\chi(z) = r^{2w}e^{\i n \theta}. $
Such a character $\chi$ is unitary if and only if $w = \i \varphi \in \i \R.$

\subsubsection{Description of $\chi_\infty$}
For a Hecke character $\chi$, let $\chi_\infty = \chi\, |_{F_\infty^\times}$ where $F_\infty^\times \hookrightarrow \I_F.$ To describe this character at infinity explicitly, we introduce some more notation:
\begin{enumerate}
\item[] $\lambda$ is any infinite place; $\lambda \in S_\infty;$
\item[] $v$ is any real place; $v \in S_r$ and $F_v \simeq \R$ canonically;
\item[] $w$ is any complex place; $w \in S_c$ and $F_w \simeq \C$ non-canonically;
\item[] $|x_\infty|_\infty = \prod_\lambda |x_\lambda|_\lambda$ for $x_\infty \in F_\infty$ is the product of normalized valuations.
\end{enumerate}

Keeping in mind that a Hecke character factorizes as $\chi = {}^0\chi \otimes |\ |^\sigma$ (see (\ref{eqn:hecke-unitary})),  
one can write the character at infinity $\chi_\infty$ on $x_\infty \in F_\infty^\times$ as 
\begin{equation}\label{eqn:hecke-infinity}
\chi_\infty(x_\infty) = 
\left(\prod_{\lambda \in S_\infty}
 \left(\frac{x_\lambda}{|x_\lambda|}\right)^{n_\lambda}  |x_\lambda|_\lambda^{i \varphi_\lambda} \right) \, 
 |x_\infty|_\infty^\sigma.
\end{equation}
where $n_v \in \{0,1\}$, $n_w \in \Z$, $\varphi_\lambda \in \R,$ and $\sigma \in \R$.

\bigskip
\section{\bf \textit{Gr\"o\ss encharaktere} mod $\m$}

\medskip
\subsection{Definition} We follow the treatment in Neukirch~\cite[Section VII.6]{neukirch}, except that for us these characters can take values in $\C^\times$, i.e., they need not be unitary. Before getting started, some further notation that will be useful: 

\begin{enumerate}
\item[] $U_F^1(\m) := \{u \in U_F \ | \ u \equiv 1 \pmod{\m} \}$ are the units congruent to $1$ mod $\m$;

\smallskip

\item[] $U_F^1(\m)^+ := \{u \in U_F^1(\m) \ | \ u \gg 0\};$ 

\smallskip

\item[] $\O_F(\m) := \{ a \in \O_F\ | \ (a,\m) = 1\};$ nonzero integers in $F$ relatively prime to $\m;$

\smallskip

\item[] $F(\m) := \{ x \in F\ | \ (x,\m) = 1\};$ nonzero elements of $F$ relatively prime to $\m;$

\smallskip

\item[] $F^1(\m) := \{x \in F \ | \ x \equiv 1 \pmod{\m} \};$ elements of $F$ congruent to $1$ mod $\m$. 
\end{enumerate}

By a {\it Gr\"o\ss encharakter} mod $\m$  
we mean a homomorphism $\psi : \J_F(\m) \to \C^\times$ for which there exists a pair of characters $(\psi_f, \psi_\infty)$ with
$$
\psi_f : (\O_F/\m)^\times \to \C^\times, \ \ {\rm and} \ \ \psi_\infty : F_\infty^\times \to \C^\times, 
$$
such that for all $a \in \O_F(\m)$, we have
\begin{equation}
\label{eqn:grossen}
\psi((a)) \ = \ \psi_f(a \ {\rm mod}\  \m) \, \psi_\infty(a), 
\end{equation}
where in the left hand side $(a) := a\O_F$ is the principal ideal generated by $a$.
We call $\psi_f$ the finite part  and $\psi_\infty$ the infinite part of $\psi$; they satisfy the compatibility condition: 
if $u \in U_F = \O_F^\times$ is a unit then $\psi((u)) = 1$ and so we require: 
$$
\psi_f(u \ {\rm mod}\  \m) \, \psi_\infty(u) = 1.
$$
Further, if $\epsilon \in U_F^1(\m) := \{u \in U_F \ | \ u \equiv 1 \ {\rm mod}\ \m \}$ then the above requirement implies that 
$\psi_\infty(\epsilon) = 1.$  
The following proposition explains the relation between $\psi$ and $(\psi_f,\psi_\infty)$.

\begin{prop}
\label{prop:basic_gross}
A {\it Gr\"o\ss encharakter} $\psi$ mod $\m$ uniquely determines $\psi_f$ and $\psi_\infty$; indeed, the restriction of $\psi$ to  the group $\P_F(\m)$ uniquely determines $\psi_f$ and $\psi_\infty,$ and these satisfy the above compatibility conditions.  

Conversely, given homomorphisms $\psi_f' : (\O_F/\m)^\times \to \C^\times$ and $\psi_\infty' : F_\infty^\times \to \C^\times$ 
satisfying the compatibility condition $\psi_f'(u \ {\rm mod}\  \m) \, \psi_\infty'(u) = 1$ for all $u \in U_F$ 
(and so necessarily $\psi_\infty'(\epsilon) = 1$ for all $\epsilon \in U_F^1(\m)$), 
there exists a  {\it Gr\"o\ss encharakter} $\psi$ mod $\m$ such that $\psi_f = \psi_f'$ and $\psi_\infty = \psi_\infty'.$ 

Finally, if 
$\xi$ is another {\it Gr\"o\ss encharakter} mod $\m$ such that $\xi_f = \psi_f$ and $\xi_\infty = \psi_\infty$, then 
$\psi \xi^{-1}$ is a character of $\Cl_F(\m)$--the ray class group mod $\m.$
\end{prop}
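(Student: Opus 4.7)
The plan is to prove the three assertions sequentially. The main ideas are: (i) restricting the defining relation to $F^1(\m)$ isolates $\psi_\infty$; (ii) density of $F^1(\m)$ in $F_\infty^\times$ via weak approximation; (iii) finiteness of $\J_F(\m)/\P_F(\m) = \Cl_F(\m)$ together with divisibility of $\C^\times$ for the extension step.

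For uniqueness, I would restrict (\ref{eqn:grossen}) to $a \in F^1(\m)$, which forces $a \bmod \m = 1$ and collapses the relation to $\psi((a)) = \psi_\infty(a)$; thus $\psi|_{\P_F(\m)}$ directly determines $\psi_\infty$ on the diagonal image of $F^1(\m)$ in $F_\infty^\times$. The central step is density: given $x \in F_\infty^\times$ and $\varepsilon > 0$, weak approximation produces $a \in F$ within $\varepsilon$ of each archimedean component of $x$ and satisfying $a \equiv 1 \pmod{\p^{m_\p}}$ for every $\p \mid \m$; for $\varepsilon$ small one has $a \neq 0$, hence $a \in F^1(\m)$. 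Continuity of $\psi_\infty$ then determines it on all of $F_\infty^\times$. With $\psi_\infty$ known, (\ref{eqn:grossen}) applied to lifts $a \in \O_F(\m)$ of the classes in $(\O_F/\m)^\times$ pins down $\psi_f$. The compatibility $\psi_f(u \bmod \m)\psi_\infty(u) = 1$ for $u \in U_F$ is forced by $\psi((u)) = \psi(\O_F) = 1$, and specialising to $\epsilon \in U_F^1(\m)$ gives $\psi_\infty(\epsilon) = 1$.

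For existence, I would first define $\psi$ on the subgroup of principal fractional ideals inside $\J_F(\m)$ by $\psi((a)) := \psi_f'(a \bmod \m)\, \psi_\infty'(a)$ for $a \in F(\m)$; well-definedness is precisely the compatibility hypothesis, since two generators of the same principal ideal differ by an element of $U_F$. Then I would extend $\psi$ to all of $\J_F(\m)$: the cokernel is a quotient of the finite group $\Cl_F$, and since $\C^\times$ is divisible, hence injective as a $\Z$-module, such an extension exists (in general non-uniquely). By construction the extended $\psi$ has finite part $\psi_f'$ and infinite part $\psi_\infty'$.

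The third assertion is then immediate from the defining relation: if $\xi$ has the same $(\psi_f, \psi_\infty)$ as $\psi$, the formula (\ref{eqn:grossen}) forces $\psi = \xi$ on every principal ideal in $\J_F(\m)$, so $\psi\xi^{-1}$ descends to a character of $\J_F(\m)/\P_F(\m) = \Cl_F(\m)$. The main obstacle is the density of $F^1(\m)$ in $F_\infty^\times$, the single point where a genuine arithmetic input (weak approximation on $F$) enters; all remaining steps are formal---unit compatibility checks, and injectivity of $\C^\times$ as an abstract abelian group.
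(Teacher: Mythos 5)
Your proof is correct and follows essentially the same route as the paper: uniqueness via the surjection $F^1(\m)\twoheadrightarrow\P_F(\m)$ and the density (weak approximation) of $F^1(\m)$ in $F_\infty^\times$, existence by first defining the character on $\P_F(\m)$ (well-defined by the unit compatibility) and then extending to all of $\J_F(\m)$, and the last assertion by observing that $\psi\xi^{-1}$ is trivial on $\P_F(\m)$ and so factors through $\Cl_F(\m)$. The only cosmetic difference is the extension step, where the paper phrases the divisibility of $\C^\times$ in the language of the induced representation $\Ind_{\P_F(\m)}^{\J_F(\m)}(\psi')$ and Mackey theory; note also one small slip on your part: the cokernel of $\P_F(\m)\hookrightarrow\J_F(\m)$ is the ray class group $\Cl_F(\m)$, which surjects onto $\Cl_F$ rather than being a quotient of it --- immaterial here, since all your argument needs is that $\C^\times$ is an injective $\Z$-module.
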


\begin{proof}
Using $\xymatrix{F^1(\m) \ar@{->>}[r] & \P_F(\m)} \subset \J_F(\m)$ we see that a {\it Gr\"o\ss encharakter} $\psi$ mod $\m$ 
determines via restriction a character of $F^1(\m).$ By weak approximation, $F^1(\m)$ is dense in $F_\infty^\times$; thus $\psi$ restricted to 
$\P_F(\m)$ uniquely determines $\psi_\infty.$ 
Clearly, $\psi$ and $\psi_\infty$ uniquely determine $\psi_f$ by 
(\ref{eqn:grossen}). For the converse, given $\psi_f'$ and $\psi_\infty'$, define $\tilde\psi'$ on  $\O_F(\m)$ by 
$$
\tilde\psi'((a)) \ := \ \psi_f'(a \ {\rm mod}\ \m)\, \psi_\infty'(a).  
$$
The compatibility conditions say that $\tilde\psi'(a)$ is well-defined. This $\tilde\psi'$ extends uniquely to $F(\m)$, and being trivial on $U_F^1(\m),$ we get 
a character $\psi' : \P_F(\m) \to \C^\times$ as $\psi'((a)) = \tilde\psi'(a).$ Now induce $\psi'$ to $\J_F(\m)$, i.e., consider the induced representation
$$
{\rm Ind}_{\P_F(\m)}^{\J_F(\m)} (\psi').
$$
An easy exercise with Mackey theory says that this representation is a multiplicity free direct sum of characters $\psi$ of 
$\J_F(\m)$ such that $\psi|_{\P_F(\m)} = \psi'$. Moreover, fix such a $\psi$, then 
$$
{\rm Ind}_{\P_F(\m)}^{\J_F(\m)} (\psi') \ = \ 
\psi \otimes {\rm Ind}_{\P_F(\m)}^{\J_F(\m)} (1\!\! 1) \ = \ 
\bigoplus_{\chi} \psi \otimes \chi
$$
with $\chi$ running over the characters of $\J_F(\m)/\P_F(\m) = \Cl_F(\m)$ the ray class group modulo $\m$. Hence, if $\xi$ is a 
{\it Gr\"o\ss encharakter} with same finite and infinite parts as $\psi$ then $\xi = \psi \otimes \chi.$
\end{proof}

Following the German language, the plural of {\it Gr\"o\ss encharakter} will be {\it Gr\"o\ss encharaktere}.

\bigskip
\subsection{The correspondence between Hecke characters and {\it Gr\"o\ss encharaktere}}

The domain of definition of a {\it Gr\"o\ss encharakter} surjects onto the domain of definition of a Hecke character as in the 
following exact sequence: 

\begin{prop}
\label{prop:exact-seq-dictionary}
We have an exact sequence 
$$
1 \ \ 
\longrightarrow  \ \frac{F(\m)}{U_F^1(\m)} \ \ 
\stackrel{\varkappa}{\longrightarrow} \ \ 
\J_F(\m) \times (\O_F/\m)^\times \times \frac{F_\infty^\times}{U_F^1(\m)} \ \
\stackrel{\varrho }{\longrightarrow} \ \ 
\frac{\I_F}{F^\times \cdot \U_{F,f}(\m)} \ \ 
\longrightarrow \ \ 1, 
$$
where 
\begin{enumerate}
\smallskip
\item $\varkappa(a) = ((a)^{-1}, \ a \ {\rm mod}\ \m, \ a \ {\rm mod}\ U_F^1(\m))$ for all $a \in F(\m)$; 
\smallskip
\item $\varrho = \alpha \otimes \beta \otimes \gamma^{-1}$ where 
  \begin{enumerate}
  \smallskip
\item $\alpha: \J_F(\m) \to  \I_F/(F^\times \cdot \U_{F,f}(\m))$ is induced by the map $\J_F \to \I_F$ 
that sends a prime ideal $\q$ to 
the id\`ele that has $\varpi_\q$ at the place corresponding to $\q$ and $1$'s elsewhere; 
\smallskip
\item $\beta : (\O_F/\m)^\times \to \I_F/(F^\times \cdot \U_{F,f}(\m))$ is induced by the map that sends $a \in \O_F(\m)$ to the id\`ele 
that has $a$ at all infinite places and also at all finite places $\p$ not dividing $\m$, and is $1$ at all $\p | \m$; and 
\smallskip
\item $\gamma : F_\infty^\times/U_F^1(\m) \to \I_F/(F^\times \cdot \U_{F,f}(\m))$ is induced by the inclusion 
$F_\infty^\times \subset \I_F$; check that $U_F^1(\m)$ is mapped into $F^\times \cdot \U_{F,f}(\m).$
  \end{enumerate}
\end{enumerate}
\end{prop}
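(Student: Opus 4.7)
I will verify exactness at each of the three positions, working place-by-place in $\I_F$ with the defining recipes for $\alpha,\beta,\gamma$. First, well-definedness of $\varkappa$ is immediate, and its injectivity is a one-line unraveling: if the triple $((a)^{-1},\,a\bmod\m,\,a\bmod U_F^1(\m))$ is trivial then $a\in U_F$ and $a\equiv 1\bmod\m$, hence $a\in U_F^1(\m)$. For $\varrho$, the only substantive check is that $\gamma$ descends modulo $U_F^1(\m)$: an element $u\in U_F^1(\m)$ embedded diagonally in $\I_F$ equals $\gamma(u)$ times the idele whose infinite components are $1$ and whose finite components are $u^{-1}$, and this second idele lies in $\U_{F,f}(\m)$ because $u$ is a global unit congruent to $1$ modulo $\m$. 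That $\varrho\circ\varkappa$ is trivial is a direct component-by-component computation whose output lands in $\U_{F,f}(\m)\subset F^\times\cdot\U_{F,f}(\m)$.

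Exactness in the middle is the first piece with content. Suppose $\varrho(\I,u,x)=1$, so
\[
\alpha(\I)\,\beta(\tilde u)\,\gamma(x)^{-1} \ = \ b\cdot v, \qquad b\in F^\times,\ v\in\U_{F,f}(\m),
\]
for any chosen lift $\tilde u\in\O_F$ of $u$. Reading this equation component-wise: at each archimedean $\lambda$ it forces $x_\lambda=\tilde u/b$; at each $\p\mid\m$ it forces $b\in 1+\p^{m_\p}$; at each $\p\nmid\m$ it gives $\mathrm{ord}_\p(b/\tilde u)=\mathrm{ord}_\p(\I)$. Setting $a:=\tilde u/b\in F(\m)$ (the $\p\mid\m$ conditions ensure coprimality with $\m$), one reads off $(a)=\I^{-1}$, $a\equiv u\bmod\m$, and $a\equiv x\bmod U_F^1(\m)$, so $\varkappa(a)=(\I,u,x)$.

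Surjectivity of $\varrho$ is the step I expect to be the main obstacle, because it is the only point where a non-formal input --- weak approximation for $F$ --- enters. Given $y\in\I_F$, choose $b\in F^\times$ with $b\,y_\p\equiv 1\pmod{\p^{m_\p}}$ at every $\p\mid\m$; this is possible because the diagonal image of $F$ is dense in $\prod_{\p\mid\m}F_\p$. Replacing $y$ by $by$ (and absorbing $b^{-1}$ into the $F^\times$-quotient on the target), I may assume $y_\p\in 1+\p^{m_\p}$ at every $\p\mid\m$. Now take $\I=\prod_{\p\nmid\m}\p^{\mathrm{ord}_\p(y_\p)}\in\J_F(\m)$, $u=1\in(\O_F/\m)^\times$, and $x=y_\infty^{-1}\in F_\infty^\times/U_F^1(\m)$. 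A place-by-place check yields
\[
y\cdot\bigl(\alpha(\I)\,\beta(1)\,\gamma(x)^{-1}\bigr)^{-1}\in\U_{F,f}(\m),
\]
which proves $y\equiv\varrho(\I,u,x)\pmod{F^\times\U_{F,f}(\m)}$. Beyond this one use of weak approximation, the entire proposition is an exercise in the accounting of ideles, ideals, residues, and infinite components.
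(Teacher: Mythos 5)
Your proof is correct, and it is precisely the componentwise id\`ele verification that the paper sketches (giving the explicit forms of $\alpha$, $\beta$, $\gamma$ modulo $F^\times$) and otherwise defers to Neukirch and to the reader: the three exactness checks go through exactly as you describe, with weak approximation entering only for surjectivity. The one quibble is a transposed identity in the $\gamma$-descent step: it is $\gamma(u)$ that equals the diagonally embedded $u$ times the id\`ele with $1$ at infinity and $u^{-1}$ at the finite places (not the diagonal $u$ that equals $\gamma(u)$ times that id\`ele), but since that finite id\`ele lies in $\U_{F,f}(\m)$ either way, the conclusion $\gamma(U_F^1(\m))\subset F^\times\cdot\U_{F,f}(\m)$ is unaffected.
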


\medskip
See Neukirch~\cite[Prop.\,VII.6.13]{neukirch} for a proof but it is an instructive exercise to fix a proof for oneself. Let us denote an $a \in \I_F$ as $(a_\infty; \dots a_\p \dots; \dots a_\q \dots)$ for $\p|\m$ and $\q \! \not| \, \m$, and $[a]$ the class of the id\`ele $a$ in 
$\I_F/(F^\times \cdot \U_{F,f}(\m)).$ Note that: 
\begin{enumerate}
\item $\alpha(\q) = [(\dots 1 \dots; \ \dots 1 \dots; \ 1, \dots 1, \varpi_\q, 1, \dots)]$ for any prime ideal $\q \! \not |\, \m$. 
\item $\beta(a) = [((\dots 1 \dots; \ a^{-1},\dots, a^{-1}; \ \dots 1 \dots)].$
\item $\gamma(x_\infty) = [(x_\infty;  \ \dots 1 \dots;  \ \dots 1 \dots)].$
\end{enumerate}
The rest of the details are left to the reader. 

\medskip
A Hecke character $\chi$ trivial on $\U_{F,f}(\m)$ will be called a Hecke character mod $\m$; necessarily the conductor of $\chi$ divides 
$\m.$ From the exact sequence, the dictionary between {\it Gr\"o\ss encharaktere} mod $\m$ and Hecke characters mod $\m$  
is clear: Given a {\it Gr\"o\ss encharakter} $\psi$ mod $\m$, with $\psi_f$ and $\psi_\infty$ its finite and infinite parts, we get a character 
$\psi \otimes \psi_f \otimes \psi_\infty$ on $\J_F(\m) \times (\O_F/\m)^\times \times (F_\infty^\times/U_F^1(\m))$ and 
\eqref{eqn:grossen} says that this is trivial on the image of $\varkappa$ and hence we get a character $\chi = \chi_\psi$ 
on $\I_F/(F^\times \cdot \U_{F,f}(\m)),$ i.e., $\chi$ is a Hecke character mod $\m$. Conversely, given such a $\chi$, we inflate 
it via $\varrho$ to get $\psi \otimes \psi_f \otimes \psi_\infty = \chi \circ \varrho.$

\medskip
\subsection{The infinity type of a {\it Gr\"o\ss encharakter}}

Given a {\it Gr\"o\ss encharakter} $\psi = (\psi_f, \psi_\infty)$ mod $\m$, by using the above dictionary and \eqref{eqn:hecke-infinity}, one sees that 
the character at infinity, which is a continuous character $\psi_\infty : F_\infty^\times \to \C^\times,$ is of the form: 
$$
\psi_\infty(x_\infty) = 
\left(\prod_{\lambda \in S_\infty}
 \left(\frac{x_\lambda}{|x_\lambda|}\right)^{n_\lambda}  |x_\lambda|_\lambda^{i \varphi_\lambda} \right) \, 
 |x_\infty|_\infty^\sigma, 
$$
where $n_v \in \{0,1\}$, $n_w \in \Z$, $\varphi_\lambda \in \R$ and $\sigma \in \R$; furthermore, $\psi_\infty$ being trivial on $U_F^1(\m)$ imposes
certain restrictions on the infinity type, i.e., the collection $((n_\lambda, \varphi_\lambda)_\lambda, \sigma)$ of exponents, that can occur.

\begin{lemma}
\label{lem:inf-type-gros-char}
Let $\psi$ be a {\it Gr\"o\ss encharakter} mod $\m$ with the above notations for $\psi_\infty$. Then: 
\begin{enumerate}
\item[(i)] there are no restrictions on $n_\lambda$ for any $\lambda \in S_\infty$, and on $\sigma$; however,  
\item[(ii)] $2 \varphi_v = \varphi_w = \varphi$ (say), for all $v \in S_r$ and all $w \in S_c$. 
\end{enumerate}
Hence, the character at infinity takes the shape: 
$$
\psi_\infty(x_\infty) = 
\left(\prod_{\lambda \in S_\infty}
 \left(\frac{x_\lambda}{|x_\lambda|}\right)^{n_\lambda} \right) \, 
 |x_\infty|_\infty^{\sigma + i \varphi}.
$$
\end{lemma}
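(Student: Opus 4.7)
The plan is to apply Dirichlet's unit theorem to the real-logarithmic embedding of $U_F^1(\m)$ inside $F_\infty^\times$; triviality of $\psi_\infty$ on $U_F^1(\m)$ (forced by the compatibility $\psi_f(u)\psi_\infty(u)=1$) then translates into a linear constraint on $(\varphi_\lambda)_\lambda$.

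Part (i) is quick. For any $u \in U_F$ the product formula gives $|u|_\infty = \prod_\lambda |u|_\lambda = 1$, since $u$ is a unit at every finite place; hence $|u|_\infty^\sigma = 1$ irrespective of $\sigma$, so $\sigma$ is unconstrained by $\psi_\infty(u)=1$. For the $n_\lambda$'s, any prescribed choice can be realized: the induced character $\psi_\infty|_{U_F}$ takes values in a finite subgroup of $S^1$ (signs at real places times roots of unity at the complex places), and one arranges for a compatible $\psi_f$ on $U_F/U_F^1(\m)$ to match it via Prop.\,\ref{prop:basic_gross}, possibly enlarging $\m$.

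The content is in (ii). Set $U' := U_F^1(\m) \cap F_{\infty+}^\times$ (totally positive units in $U_F^1(\m)$), a finite-index subgroup, and define $L : F_{\infty+}^\times \to \R^{r_1+r_2}$ by $L(x)_\lambda := \log|x_\lambda|_\R$. By Dirichlet's theorem $L(U')$ is a lattice of rank $r_1+r_2-1$ spanning the hyperplane
\[
H := \Big\{y \in \R^{r_1+r_2} : \sum_{v\in S_r} y_v + 2\sum_{w\in S_c} y_w = 0\Big\}.
\]
On $U'$, the sign factors are $1$ and $|u|_\infty^\sigma=1$, so $\psi_\infty(u)=1$ reduces to
\[
\prod_{w \in S_c} \Big(\frac{u_w}{|u_w|_\R}\Big)^{n_w} \cdot \exp\bigl(i \langle \vec\alpha, L(u)\rangle\bigr) = 1,
\]
where $\vec\alpha := (\varphi_v)_{v \in S_r} \cup (2\varphi_w)_{w \in S_c}$, the factor of $2$ at complex places coming from $|x_w|_w = |x_w|_\R^2$. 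Absorbing the unitary angular character on the left into a finite-order character (trivial on a further finite-index sublattice of $U'$), one is reduced to $\langle \vec\alpha, y\rangle = 0$ for $y$ in the real span of $L(U')$, namely $H$. Thus $\vec\alpha \perp H$, i.e.\ $\vec\alpha$ lies in $\R \cdot (1,\ldots,1,2,\ldots,2)$; writing the common scalar as $\varphi$, matching the coordinates $\varphi_v$ against $1$ and $2\varphi_w$ against $2$ yields a single common value $\varphi$ across all places. Substituting, $\prod_\lambda |x_\lambda|_\lambda^{i\varphi_\lambda} = |x_\infty|_\infty^{i\varphi}$, which combines with $|x_\infty|_\infty^\sigma$ to give the asserted shape.

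The main obstacle is the careful treatment of the angular factors $(u_w/|u_w|_\R)^{n_w}$ at complex places together with the torsion in $U_F^1(\m)$: naively, the equation only forces $\langle\vec\alpha, L(u)\rangle$ to lie in $2\pi\Z$ modulo a discrete phase, so a priori $\vec\alpha$ is determined only up to a dual-lattice translate rather than exactly in $H^\perp$. Pinning the ambiguity down to $H^\perp$ requires passing to sufficiently divisible sublattices of $U'$ and exploiting the integrality of the $n_w$ together with continuity of $\psi_\infty$ as a Lie-group character, so that the residual discrete freedom is entirely absorbed into the $(n_\lambda)$-data.
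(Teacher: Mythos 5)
Your strategy is the same as the paper's: restrict $\psi_\infty$ to $U_F^1(\m)^+$, use Dirichlet's unit theorem to see that the Minkowski image $\l(U_F^1(\m)^+)$ is a full lattice in the trace-zero hyperplane $\cH$, and conclude that the coefficient vector $\vec\alpha$ of the resulting linear relation is orthogonal to $\cH$, hence proportional to the all-ones vector. You have also correctly isolated the one genuinely delicate point, which the paper's adumbration passes over in silence: in Lem.\,\ref{lem:purity} the exponents $n_\tau$ are integers, so $\prod_\tau \tau(u)^{n_\tau}=1$ is an exact identity of complex numbers and applying $\log|\cdot|$ yields an exact linear relation; here the exponents are complex, $\log|\cdot|$ of the relation only sees the real parts (where it reduces to the product formula for units), and the information about the $\varphi_\lambda$ lives in the argument, which is defined only modulo $2\pi$. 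Triviality on $U_F^1(\m)^+$ therefore gives only $\langle\vec\alpha,\l(u)\rangle+\sum_w n_w\theta_w(u)\in 2\pi\Z$, not $\langle\vec\alpha,\l(u)\rangle=0$.

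Neither of your two patches closes this gap. First, the angular character $u\mapsto\prod_w(u_w/|u_w|)^{n_w}$ on $U_F^1(\m)^+$ is in general \emph{not} of finite order (outside the CM/totally real case the arguments of units at complex places are typically irrational multiples of $\pi$), so it cannot be trivialized on a finite-index subgroup. Second, passing to "sufficiently divisible sublattices" of $U'$ goes the wrong way: if $\langle\vec\alpha,y\rangle\in 2\pi\Z$ on a lattice $L$, the same holds a fortiori on $NL$, so shrinking the lattice only weakens the congruence; one would need to \emph{enlarge} $U'$, which is not available. In fact the step cannot be repaired, because the congruence genuinely admits solutions $\vec\alpha\notin\cH^\perp$: even with all $n_w=0$ the solution set is $\cH^\perp+2\pi L^*$, where $L^*$ is the dual lattice of $\l(U_F^1(\m)^+)$ inside $\cH$. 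Concretely, for $F$ real quadratic with totally positive fundamental unit $\eta>1$, the character $\psi_\infty(x_1,x_2)=|x_1/x_2|^{i\pi k/\log\eta}$ ($k\in\Z$) is trivial on $U_F$, so by Prop.\,\ref{prop:basic_gross} it is the infinite part of a genuine {\it Gr\"o\ss encharakter} mod $(1)$, yet for $k\neq 0$ its $\varphi_\lambda$ are not all equal and it is not of the displayed shape $|x_\infty|_\infty^{\sigma+i\varphi}$ times a sign character: these are Hecke's classical infinite-order characters underlying Maass forms. (This is also forced by counting: the paper later shows $F_{\infty+}^\times/U_F^1(\m)\simeq\R^\times_+\times(S^1)^{d_F-1}$, whose character group has $\Z^{d_F-1}$ worth of discrete parameters, strictly more than the $(n_\lambda)$ provide; the excess $\Z^{r_1+r_2-1}$ is exactly $2\pi L^*$.) So the conclusion holds only after either enlarging the statement by this discrete family, or imposing integrality of the exponents as in the algebraic (type $(A_0)$) case, where the argument of Lem.\,\ref{lem:purity} applies verbatim. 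Your closing claim that the residual discrete freedom "is entirely absorbed into the $(n_\lambda)$-data" is therefore not correct, and this is where your proof (and, for what it is worth, the paper's sketch) breaks down.
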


We adumbrate how one proves this lemma. Write the character at infinity as: 
$$
\psi_\infty(x_\infty) \ = \ 
\prod_{v \in S_r} {\rm sgn}(x_v)^{n_v} |x_v|^{\sigma + i \varphi_v} \cdot
\prod_{w \in S_c} x_w^{\sigma + n_w/2 + i \varphi_w/2} \, \bar{x}_w^{\, \sigma - n_w/2 + i \varphi_w/2} 
$$
For brevity, denote 
$
f_v := \sigma + i \varphi_v, \  f_w := \sigma + n_w/2 + i \varphi_w/2,  \ 
f_{\bar{w}} := \sigma - n_w/2 + i \varphi_w/2.
$ 
As $\psi_\infty$ is trivial on any $u \in U_F^1(\m)^+$ (totally positive global units that are $1$ mod $\m$) we get 
$$
\prod_{v \in S_r} v(u)^{f_{v}} \prod_{w \in S_c} w(u)^{f_w}  \bar{w}(u)^{f_{\bar{w}}} = 1.
$$
We will go through the rest of the proof, which uses the ingredients of the proof in Dirichlet's unit theorem, in detail in the proof of Lem.\,\ref{lem:purity} below 
(the main focus of this article is the notion of an algebraic Hecke character to which that lemma particularly applies) giving us the constraint: 
$2 f_v = f_w$, from which the above lemma follows.

\bigskip
\section{\bf Algebraic Hecke characters}

\bigskip
\subsection{Definition and purity}
There is a canonical map $\Sigma_F  \to S_\infty.$ Let $\lambda \in S_\infty$. 
If $\lambda = v \in S_r$, i.e., $\lambda$ is a real place $v$ then it corresponds to a unique 
real embedding $\tau_v: F \to \R$, and if $\lambda = w \in S_c$, i.e., $\lambda$ is a complex place $w$ then it corresponds to a conjugate pair of complex embeddings 
$\{\tau_w, \bar\tau_w\}$ with the understanding that the choice of $\tau_w : F \to \C$ is not canonical; we use $\tau_w$ to identify $F_w \simeq \C.$ 
Recall, as a reminder of our notational artifice, that 
$F_\infty = F \otimes \R \simeq \prod_{\lambda \in S_\infty} F_\lambda \simeq \prod_{v \in S_r} \R \times \prod_{w \in S_c} \C.$ One may write 
$x_\infty \in F_\infty$ simply as $x_\infty = (x_\lambda)_{\lambda \in S_\infty},$ or more elaborately as 
$x_\infty = ((x_v)_{v \in S_r}, (z_w)_{w \in S_c})$; both notations have their intrinsic virtue. 
Let $\chi$ be Hecke character of $F$, and $\chi_\infty$ its character at infinity; see \eqref{eqn:hecke-infinity}. We say that $\chi$ is an 
{\it algebraic Hecke character} if for every $\tau \in \Sigma_F$ there exists an integer $n_\tau$ such that for $x_\infty \in F_\infty^\times$ we have:
\begin{equation}
\label{eqn:algebraic-H-char}
\chi_\infty(x_\infty) \ = \ 
\prod_{v \in S_r} x_v^{n_{\tau_v}} \prod_{w \in S_c} z_w^{n_{\tau_w}} \bar{z}_w^{\, n_{\bar\tau_w}}.
\end{equation}
By separating, the right hand side into unitary and non-unitary parts, while noting that $z^p \bar{z}^q = \left(\tfrac{z}{|z|}\right)^{p-q} |z|_\C^{(p+q)/2}$,  
we have: 
$$
\chi_\infty(x_\infty) \ = \ 
\left(\prod_{v \in S_r} \left(\frac{x_v}{|x_v|}\right)^{n_{\tau_v}} \prod_{w \in S_c} \left(\frac{z_w}{|z_w|}\right)^{n_{\tau_w} - n_{\bar\tau_w}}\right) 
\left(\prod_{v \in S_r} |x_v|_v^{n_{\tau_v}} \prod_{w \in S_c} |z_w|_w^{(n_{\tau_w} + n_{\bar\tau_w})/2}\right).
$$
If we compare this with \eqref{eqn:hecke-infinity}, we see $v \in S_r,$ $w \in S_c,$ and $\lambda \in S_\infty$ that: 
$$
n_v \equiv n_{\tau_v} \ {\rm mod} \ 2, \ \ 
n_w = n_{\tau_w} - n_{\bar\tau_w}, \ \ 
\varphi_\lambda = 0, \ \ 
2 \sigma = 2 n_{\tau_v} =  n_{\tau_w} + n_{\bar\tau_w}
$$ 
Especially note the constraint: $2 n_{\tau_v} =  n_{\tau_w} + n_{\bar\tau_w}$ for all $v \in S_r,$ $w \in S_c.$ Now, let us suppose the 
algebraic Hecke character has modulus $\m.$ Then this constraint gets significantly tighter by the following:

\begin{lemma}[Purity]
\label{lem:purity}
For each $\tau \in \Sigma_F$, suppose we are given $n_\tau \in \Z$. Suppose for some integral ideal $\m$ of $F$ we have 
\begin{equation}
\label{eqn:trivial-uf1m}
\prod_{\tau \in \Sigma_F} \tau(u)^{n_\tau} \ = \ 1, \quad \forall u \in U_F^1(\m).
\end{equation}
Then, there exists ${\sf w} \in \Z$ such that 
\begin{enumerate}
\item[(i)] if $S_r \neq \emptyset$ then $n_\tau = {\sf w}$ for all $\tau \in \tau_F,$ and 
\item[(ii)] if $S_r = \emptyset$ then $n_{\gamma \circ \tau} + n_{\gamma \circ \bar\tau} = {\sf w}$ for all $\tau \in \Sigma_F$ and $\gamma \in \GalQ.$
\end{enumerate}
\end{lemma}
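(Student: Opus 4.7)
The plan is to convert the multiplicative hypothesis into a linear relation via logarithms, solve it using Dirichlet's unit theorem, and then run a Galois-equivariance argument to propagate the conclusion across all of $\Sigma_F$.

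First I would replace $U_F^1(\m)$ by its finite-index subgroup $U_F^1(\m)^+$ of totally positive units; by Dirichlet's theorem this subgroup still has rank $r_1+r_2-1$, so its image under $u \mapsto (\log|u|_\lambda)_{\lambda \in S_\infty}$ is a full-rank lattice in the hyperplane $H = \{y \in \R^{r_1+r_2} : \sum_\lambda y_\lambda = 0\}$. Taking absolute values of the given relation $\prod_\tau \tau(u)^{n_\tau} = 1$, pairing $\tau_w$ with $\bar\tau_w$, and passing to normalized valuations gives
$$
\sum_{v \in S_r} n_{\tau_v}\, \log|u|_v \,+\, \sum_{w \in S_c} \frac{n_{\tau_w}+n_{\bar\tau_w}}{2}\, \log|u|_w \,=\, 0 \qquad (\forall\, u \in U_F^1(\m)^+).
$$
The coefficient vector must therefore be orthogonal to $H$, hence parallel to $(1,\ldots,1)$: there exists ${\sf w}_0 \in \tfrac12\Z$ with $n_{\tau_v} = {\sf w}_0$ for all $v \in S_r$ and $n_{\tau_w}+n_{\bar\tau_w} = 2{\sf w}_0$ for all $w \in S_c$, and if $S_r \neq \emptyset$ then ${\sf w}_0 \in \Z$.

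Next I would apply Galois equivariance. For $\gamma \in \GalQ$, acting on the equation $\prod_\tau \tau(u)^{n_\tau}=1$ (which lives in $\bar\Q^\times$) and relabeling $\tau \mapsto \gamma \circ \tau$ gives the new relation $\prod_{\tau \in \Sigma_F} \tau(u)^{n_{\gamma^{-1} \circ \tau}} = 1$ valid for all $u \in U_F^1(\m)^+$, to which the preceding argument delivers a pure-weight constant ${\sf w}_\gamma$ with $n_{\gamma^{-1}\tau_v} = {\sf w}_\gamma$ and $n_{\gamma^{-1}\tau_w}+n_{\gamma^{-1}\bar\tau_w} = 2{\sf w}_\gamma$. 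The integer $\sum_{\tau \in \Sigma_F} n_\tau$ is invariant under re-indexing by $\tau \mapsto \gamma \circ \tau$, while the pure-weight constraint forces it to equal $d_F\, {\sf w}_\gamma$ for each $\gamma$; therefore ${\sf w}_\gamma = {\sf w}_0$ independently of $\gamma$.

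To conclude: in case (ii) with $S_r = \emptyset$, substituting $\gamma$ for $\gamma^{-1}$ in the relation $n_{\gamma^{-1}\tau_w}+n_{\gamma^{-1}\bar\tau_w} = 2{\sf w}_0$ yields $n_{\gamma \circ \tau} + n_{\gamma \circ \bar\tau} = 2{\sf w}_0$ for every embedding $\tau \in \Sigma_F$ and every $\gamma \in \GalQ$, which is the stated conclusion with ${\sf w} := 2{\sf w}_0 \in \Z$. In case (i) with $S_r \neq \emptyset$, fix any real place $v$; since $\GalQ$ acts transitively on $\Sigma_F = \Hom(F,\bar\Q)$, for each $\sigma \in \Sigma_F$ there is $\gamma$ with $\gamma^{-1}\tau_v = \sigma$, giving $n_\sigma = {\sf w}_\gamma = {\sf w}_0$. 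The main subtlety I expect to watch is the justification for applying Galois to the equation — the individual values $\tau(u)$ all lie in $\bar\Q$ even though neither $u$ nor $U_F^1(\m)$ itself needs to be Galois-stable, and only the exponent pattern $(n_\tau) \mapsto (n_{\gamma^{-1}\tau})$ gets permuted — together with bookkeeping the factor-of-two discrepancy between the half-integer ${\sf w}_0$ produced by the log argument and the integer ${\sf w}$ appearing in the statement.
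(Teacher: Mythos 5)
Your proof is correct and follows essentially the same route as the paper's: take logarithms, use that the log-image of the finite-index unit subgroup is a full-rank lattice in the trace-zero hyperplane to force the coefficient vector to be a multiple of $(1,\dots,1)$, then twist the exponents by $\gamma \in \GalQ$ and repeat. The one place you are more careful than the paper is in justifying that the constant ${\sf w}_\gamma$ is independent of $\gamma$ (via the invariance of $\sum_\tau n_\tau$ under relabeling), a point the paper's proof leaves implicit.
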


\begin{proof}
The proof uses the ingredients of the proof in Dirichlet's unit theorem. Note that $U_F^1(\m)$ has finite index in $U_F$. 
Enumerate the real embeddings of $F$ as $\{v_1,\dots,v_{r_1}\}$ and the complex embeddings as  
$\{w_1, \bar{w}_1, \dots, w_{r_2}, \bar{w}_{r_2}\}.$ 
With a minor abuse of notation, for brevity, write $\tau_{v_i} = v_i$, and $\tau_{w_j} = w_j.$ 
Let $\cH \subset \R^{r_1+r_2}$ be the hyperplane given by the sum of all coordinates being $0.$
The Minkowski map $\l : U_F \to \cH$ is given by:
$$
\l(u) \ = \ (\log|v_1(u)|_\R, \, \dots, \, \log|v_{r_1}(u)|_\R, \, \log|w_1(u)|_\C, \dots, \, \log|w_{r_2}(u)|_\C ).
$$
In \eqref{eqn:trivial-uf1m}, apply $\log|\ |_\C$  to get: 
\begin{multline}
\label{eqn:lin-dep-reln}
2n_{v_1} \log|v_1(u)|_\R + \cdots + 2n_{v_{r_1}} \log|v_{r_1}(u)|_\R + \\ 
(n_{w_1} + n_{\bar{w}_1}) \log|w_1(u)|_\C + \cdots +  (n_{w_{r_2}} + n_{\bar{w}_{r_2}} ) \log|w_{r_2}(u)|_\C \ = \ 0, 
\end{multline}
for all $u \in U_F^1(\m)^+.$
The proof of Dirichlet's unit theorem gives that $\Gamma = \l(U_F)$ is a lattice in $\cH$; hence $\Gamma_F^1(\m) := \l(U_F^1(\m))$ is also a lattice 
in $\cH$; whence, there exists $u_1,\dots,u_{t-1} \in U_F^1(\m)$ such that 
$\{v_1 := \l(u_1), \dots, v_{t-1} := \l(u_{t-1})\}$ is an $\R$-basis for $\cH,$ where, for brevity, we denote $t = r_1+r_2.$ 
Let us write $v_i = (a_{i1},\dots,a_{it})$ as a vector 
in $\R^t,$ and consider the $(t-1) \times t$ sized matrix $A = [a_{ij}].$ The rank of $A$ is $t-1$ and hence its nullity is $1.$ Since each $v_i \in \cH$ we know
$\sum_j a_{ij} = 0$, we deduce that for any $X \in \R^{t \times 1}$, if $AX = 0$ then all the coordinates of $X$ are equal. Now  
\eqref{eqn:lin-dep-reln} applied to $u_1, \dots, u_{t-1}$ gives a solution to the system of equations $AX = 0$ from which we get 
$
2n_{v_1} = \cdots = 2n_{v_1} = n_{w_1} + n_{\bar{w}_1} = \cdots = n_{w_{r_2}} + n_{\bar{w}_{r_2}}.$   
(The reader may pause here to note that the above argument proves Lem.\,\ref{lem:inf-type-gros-char}.) 
Let us denote the common value by ${\sf w}'.$ Now, take any $\gamma \in \GalQ$, apply $\gamma^{-1}$ to \eqref{eqn:trivial-uf1m} to get 
$$
1 \ = \ \prod_{\tau \in \tau_F} \gamma^{-1}(\tau(u))^{n_\tau} \ = \ 
\prod_{\tau \in \tau_F} \tau(u)^{n_{\gamma \circ \tau}}, 
$$
and we go through the name argument to get $n_{\gamma \circ \tau} + n_{\gamma \circ \overline{\tau}} = {\sf w}'$ for all $\tau \in \tau_F$ and $\gamma \in \GalQ.$ If $S_r \neq \emptyset$, then since $\GalQ$ acts transitively on $\Sigma_F$ we see that $n_\tau = n_{v_1}$ for all $\tau$; then 
take ${\sf w} = n_{v_1} = {\sf w}'/2$. If $S_r = \emptyset$ then take ${\sf w} = {\sf w}'.$  
\end{proof}

\medskip

Let $\chi$ be an algebraic Hecke character of $F$ mod $\m$ and with infinity type $(n_\tau)_{\tau \in \Sigma_F}.$ Then the exponents $n_\tau$ satisfy the 
conditions of Lem.\,\ref{lem:purity}. The integer ${\sf w}$ is called the {\it purity weight} of $\chi$. It is worth noting that when $F$ is totally real or totally imaginary then there is no constraint on the parity of the purity weight ${\sf w}$, but when $F$ has both real and complex embeddings then the purity weight ${\sf w}$ is necessarily an even integer. Note the following easy consequence of purity: 

\begin{cor}
\label{cor:alg_hecke_2_cases}
Let $\chi$ be an algebraic Hecke character of $F$ mod $\m$ and with infinity type $(n_\tau)_{\tau \in \Sigma_F}$ and purity weight ${\sf w}.$ 
\begin{enumerate}
\item[(i)] If $S_r \neq \emptyset$ then $\chi = \chi^\circ |\!| \ |\!|^{\sf w}$ for a Dirichlet character $\chi^\circ$ of $F$ mod $\m$.  
\item[(ii)] If $S_r = \emptyset$ then $\chi = \chi^u |\!| \ |\!|^{{\sf w}/2}$ for a unitary Hecke character $\chi^u$ of $F$ mod $\m$.
\end{enumerate} 
\end{cor}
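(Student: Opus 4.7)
The plan is to define the twists $\chi^\circ := \chi \cdot |\!|\ |\!|^{-{\sf w}}$ in case (i) and $\chi^u := \chi \cdot |\!|\ |\!|^{-{\sf w}/2}$ in case (ii), and verify their defining properties by computing the character at infinity via Lem.\,\ref{lem:purity}. Any real power of the adelic norm $|\!|\ |\!|$ is trivial on $\U_{F,f}(\m)$, so both twists remain Hecke characters mod $\m$; hence it suffices to check the finite-order (resp.\ unitarity) claim.

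For (i), Lem.\,\ref{lem:purity} forces $n_\tau = {\sf w}$ for every $\tau \in \Sigma_F$. Substituting into \eqref{eqn:algebraic-H-char} and using $z_w^{\sf w}\bar{z}_w^{\sf w} = |z_w|_\C^{\sf w}$ at each complex place, the $|\!|\ |\!|^{-{\sf w}}$ twist cancels everything at infinity except possible sign factors at the real places:
$$
\chi^\circ_\infty(x_\infty) \ = \ \prod_{v \in S_r} {\rm sgn}(x_v)^{\sf w}.
$$
This is trivial on $F_{\infty +}^\times$, and $\chi^\circ$ is trivial on $\U_{F,f}(\m)$ by construction. So $\chi^\circ$ factors through the narrow ray class group $\Cl_F^+(\m)$ of \eqref{eqn:ray-class-group}, which is finite. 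Hence $\chi^\circ$ has finite order and is a Dirichlet character mod $\m$.

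For (ii), Lem.\,\ref{lem:purity} gives $n_{\tau_w}+n_{\bar\tau_w}={\sf w}$ for every $w\in S_c$. Writing $z_w = r_w e^{i\theta_w}$ gives $z_w^{n_{\tau_w}}\bar{z}_w^{n_{\bar\tau_w}}=r_w^{\sf w}e^{i(n_{\tau_w}-n_{\bar\tau_w})\theta_w}$, while $|\!|\ |\!|^{-{\sf w}/2}$ contributes $\prod_w r_w^{-{\sf w}}$ at infinity, so the radial factors cancel:
$$
\chi^u_\infty(x_\infty) \ = \ \prod_{w \in S_c} (z_w/|z_w|_\R)^{n_{\tau_w}-n_{\bar\tau_w}} \ \in \ S^1.
$$
To upgrade this to global unitarity, use the canonical factorization $\chi^u = {}^0\chi^u \otimes |\!|\ |\!|^\sigma$ of \eqref{eqn:hecke-unitary}. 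Since both $({}^0\chi^u)_\infty$ and $\chi^u_\infty$ take values in $S^1$, so does the restriction of $|\!|\ |\!|^\sigma$ to $F_\infty^\times$; but that restriction surjects onto $\R^\times_+$ as $x_\infty$ varies, which forces $\sigma = 0$. Hence $\chi^u = {}^0\chi^u$ is unitary.

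There is no serious obstacle: the whole proof is an explicit cancellation at infinity using purity, together with the observation in case (i) that triviality on $F_{\infty +}^\times$ combined with being mod $\m$ lands one inside the finite group $\Cl_F^+(\m)$, and the standard fact in case (ii) that a Hecke character whose archimedean component is unitary must itself be unitary.
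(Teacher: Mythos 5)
Your proof is correct and is essentially the argument the paper has in mind: the corollary is stated there as an immediate consequence of the purity lemma (Lem.\,\ref{lem:purity}) with no written proof, and your explicit cancellation at infinity, the descent to the finite group $\Cl_F^+(\m)$ in case (i) (for which the precise reference is Prop.\,\ref{prop:narrow-ray-class-gp-mod-m} together with Prop.\,\ref{prop:finite-index-subgp}, rather than \eqref{eqn:ray-class-group}), and the use of the factorization \eqref{eqn:hecke-unitary} to force $\sigma=0$ in case (ii) are exactly the intended verifications.
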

In case (ii), let us observe that the character at infinity of $\chi^u$ has the following shape: 
$$
\chi^u_\infty(x_\infty) \ = \ \prod_{w \in S_c = S_\infty} \left(x_w/\bar{x}_w\right)^{n_w - {\sf w}/2}.
$$

\medskip
\subsection{The $L$-function of an algebraic Hecke character}

\subsubsection{Definition(s) of Hecke $L$-function}
\label{sec:defn-l-fn}
Hecke considered the class of {\it Gr\"o\ss encharaktere} mod integral ideals as the optimal class of characters for which one may study the analytic properties of $L$-functions, called Hecke $L$-functions, that we now define. 
Let $\chi$ be an algebraic Hecke character, and $\psi = (\psi_f, \psi_\infty)$ the associated {\it Gro\ss encharakter}. Suppose that $\m$ is the conductor of 
$\chi$, and so also $\m$ is the conductor of $\psi$. The (finite part of the) Hecke $L$-function of $\chi$ may be defined in two possible ways: 
\smallskip
\begin{enumerate}
\item As defined by Hecke in the form of a Dirichlet series: 
$$ 
L_f(s, \chi) \ = \ \sum_{\a} \frac{\psi(\a)}{N_{F/\Q}(\a)^s},
$$
the summation running over integral ideals $\a$ of $\O_F$ that are relatively prime to $\m$. 

\smallskip
\item As defined by Tate in the form of an Euler product: 
$$ 
L_f(s,\chi)\ = \ \prod_\p \, \left(1 - \chi_\p(\varpi_\p) N_{F/\Q}(\p)^{-s}\right)^{-1},
$$
the product is over prime ideals $\p$ of $\O_F$ that are relatively prime to $\m.$ 
\end{enumerate}

\medskip
Here, $N_{F/\Q}(\p)$ is defined as $|\O_F/\p|,$ and is multiplicatively extended to define $N_{F/\Q}(\a).$ 
One has the usual analytic properties: absolute convergence in a half-plane, analytic continuation, and functional equation. In the form of Hecke's definition, we refer the reader to Neukirch \cite{neukirch}; in the form of Tate's definition, we refer the reader to Tate's thesis \cite{tate} and Weil's book \cite{weil-book}.

\subsubsection{The coefficients of the $L$-function of an algebraic Hecke character}

The main reason to consider algebraic Hecke characters (called characters of type $A_0$ by Weil \cite{weil}) is the following proposition which 
puts us in the right context to study the arithmetic properties of Hecke $L$-functions. 

\begin{prop}
\label{prop:algebraic_values}
Let $\chi$ be an algebraic Hecke character, and $\psi = (\psi_f, \psi_\infty)$ the associated {\it Gro\ss encharakter} of modulus, say, $\m.$ 
Then the coefficients of the Dirichlet series giving the finite part of the Hecke 
$L$-function $L_f(s, \chi)$ are contained in a number field, i.e., there is a finite extension $E$ of $\Q$ such that $\psi(\a) \in E$ (resp., $\chi_\p(\varpi_\p)
\in E$) for all integral ideals $\a$ (resp., prime ideals $\p$) relatively prime to $\m.$  
\end{prop}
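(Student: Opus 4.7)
The strategy is to pin down $\psi(\mathfrak{a})$ on a complete set of representatives of the ray class group $\Cl_F(\mathfrak{m}) = \J_F(\mathfrak{m})/\P_F(\mathfrak{m})$ and use that $\psi$ on principal ideals has a completely explicit form dictated by the algebraicity condition \eqref{eqn:algebraic-H-char}. Everything else is then generated multiplicatively from these finitely many values, so compositing with the fields naturally attached to $F$, to $\mathfrak{m}$, and to the infinity type will give a number field $E$.

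\textbf{Step 1: values on principal ideals.} For $a \in \O_F(\mathfrak{m})$, the defining relation \eqref{eqn:grossen} gives
$$
\psi((a)) \ = \ \psi_f(a \bmod \mathfrak{m}) \ \prod_{\tau \in \Sigma_F} \tau(a)^{n_\tau},
$$
using that the algebraicity hypothesis forces $\psi_\infty(a) = \prod_\tau \tau(a)^{n_\tau}$ when $a \in F^\times$ is embedded diagonally into $F_\infty^\times$. The first factor is a root of unity (since $(\O_F/\mathfrak{m})^\times$ is finite), lying in the cyclotomic field $\Q(\zeta_N)$ with $N = |(\O_F/\mathfrak{m})^\times|$; the second factor lies in the Galois closure $F^{\rm gal}$ of $F$ inside $\bar\Q$. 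So $\psi((a)) \in F^{\rm gal}\cdot \Q(\zeta_N)$, a fixed number field, for every principal ideal $(a)$ with $a \in \O_F(\mathfrak{m})$.

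\textbf{Step 2: values on a system of representatives.} Let $h = |\Cl_F(\mathfrak{m})|$, which is finite by the finiteness of the ray class group, and pick ideals $\mathfrak{a}_1, \dots, \mathfrak{a}_h \in \J_F(\mathfrak{m})$ representing its classes. Since $\mathfrak{a}_i^h \in \P_F(\mathfrak{m})$, write $\mathfrak{a}_i^h = (\alpha_i)$ with $\alpha_i \in F^1(\mathfrak{m}) \subset \O_F(\mathfrak{m})$. Then Step 1 gives
$$
\psi(\mathfrak{a}_i)^h \ = \ \psi(\mathfrak{a}_i^h) \ = \ \psi((\alpha_i)) \ = \ \prod_{\tau \in \Sigma_F} \tau(\alpha_i)^{n_\tau} \ \in \ F^{\rm gal},
$$
(the $\psi_f$-factor being $1$ since $\alpha_i \equiv 1\pmod{\mathfrak{m}}$). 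Hence each $\psi(\mathfrak{a}_i)$ is an $h$-th root of an algebraic number, so is itself algebraic.

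\textbf{Step 3: assembly.} Any integral ideal $\mathfrak{a}$ coprime to $\mathfrak{m}$ can be written uniquely (modulo $\P_F(\mathfrak{m})$) as $\mathfrak{a} = \mathfrak{a}_i \cdot (a)$ with $a \in F^1(\mathfrak{m})$, giving
$$
\psi(\mathfrak{a}) \ = \ \psi(\mathfrak{a}_i) \cdot \prod_{\tau \in \Sigma_F} \tau(a)^{n_\tau}.
$$
Take $E$ to be the number field generated over $\Q$ by $F^{\rm gal}$, by $\Q(\zeta_N)$, and by the finitely many algebraic numbers $\psi(\mathfrak{a}_1), \dots, \psi(\mathfrak{a}_h)$. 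Then $\psi(\mathfrak{a}) \in E$ for every $\mathfrak{a} \in \J_F(\mathfrak{m})$, and the local values $\chi_\mathfrak{p}(\varpi_\mathfrak{p}) = \psi(\mathfrak{p})$ for $\mathfrak{p} \nmid \mathfrak{m}$ lie in $E$ as a special case.

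\textbf{Main obstacle.} The only nontrivial point is Step 2, i.e.\ that the values $\psi(\mathfrak{a}_i)$ on class representatives are themselves algebraic; this is forced by the finiteness of $\Cl_F(\mathfrak{m})$ together with the algebraicity of $\psi$ on principal ideals, which in turn is exactly where the algebraic (as opposed to merely continuous) nature of the Hecke character \eqref{eqn:algebraic-H-char} is decisive. Everything else is bookkeeping using \eqref{eqn:grossen}.
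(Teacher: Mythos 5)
Your proposal is correct and follows essentially the same route as the paper's proof: algebraicity of $\psi$ on $\P_F(\m)$ via \eqref{eqn:grossen} (root of unity from $\psi_f$ times a product of conjugates of $a$ from $\psi_\infty$), then finiteness of the ray class group to propagate to all of $\J_F(\m)$. Your Step 2, extracting an $h$-th power to land in $\P_F(\m)$, is just an explicit spelling-out of the paper's terser "finite index" argument.
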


\begin{proof}
By \eqref{eqn:grossen}, it follows that for $a \in \O_F(\m)$, and hence also for $a \in F(\m)$, 
$\psi((a))$ takes values in a finite extension $E_1$ of $\Q$ inside $\bar{\Q}$, since $(\O_F/\m)^\times$ is a finite group, 
and since $\psi_\infty$ is algebraic \eqref{eqn:algebraic-H-char} we see that 
$\psi_\infty(a)$ takes values in the compositum of all the conjugates of $F$ inside $\bar{\Q}.$ Hence the values of $\psi$ on $\P_F(\m)$ lie in $E_1$. Since, 
$\P_F(\m)$ has finite index in $\J_F(\m)$ we see that the values of $\psi$ are in a finite extension $E$ of $E_1$ contained inside $\bar\Q.$ 
By Prop.\,\ref{prop:exact-seq-dictionary}, $\psi(\p) = \chi_\p(\varpi_\p)$ for all prime ideals $\p \! \not | \, \m$. 
\end{proof}

\medskip
\subsubsection{The rationality field of an algebraic Hecke character}

The smallest subfield of $\bar\Q$ that contains all the `values' of an algebraic Hecke character $\chi$, denoted  $\Q(\chi)$, 
is called its {\it rationality field}. More precisely, 
with notations as in the above proposition, suppose $\chi$ has conductor $\m$, we may define: 
\begin{multline}
\label{eqn:rationality-field}
\Q(\chi) \ = \ \Q(\{\chi(\p) : \mbox{for all prime ideals $\p$ relatively prime to $\m$}\}) \\ 
= \Q(\{\psi(\a) : \mbox{for all integral ideals $\a$ relatively prime to $\m$}\}).
\end{multline} 
As Weil comments in \cite{weil}, $\Q(\chi)$ need not contain $F$; consider, for example, a quadratic character of any number field $F$. 

\medskip
\subsubsection{The critical values of the $L$-function of an algebraic Hecke character}
Let us recall the shape of the functional equation satisfied by $L_f(s, \chi),$ in as much as we recall the precise recipe for the $\Gamma$-factors 
$L_\infty(s, \chi)$ necessary to define the completed $L$-function:
$$
L(s, \chi) \ := \ L_\infty(s, \chi) L_f(s, \chi), 
$$
where, if $\chi$ has infinity type $(n_\tau)_{\tau \in \Sigma_F}$, $n_\tau \in \Z$, satisfying the purity condition of Lem.\,\ref{lem:purity}, then we define:
$$
L_\infty(s, \chi)  \ = \ 
\prod_{v \in S_r} \Gamma_\R(s + n_v + \epsilon_v) 
\cdot \prod_{w \in S_c} \Gamma_\C\left(s + \frac{n_w + n_{\bar{w}}}{2} + \frac{|n_w - n_{\bar{w}}|}{2}\right), 
$$
where $\Gamma_\R(s) = \pi^{-s/2}\Gamma(s/2),$ $\Gamma_\C(s) = 2(2\pi)^{-s} \Gamma(s)$, and $\epsilon_v \in \{0,1\}$ with 
$\epsilon_v \equiv n_v$ mod $2$. When $\chi$ is nontrivial, the completed $L$-function admits an analytic continuation to an entire function
of $s \in \C$, and a beautiful aspect of $L$-functions is the functional equation:
$$
L(s, \chi) \ = \ \varepsilon(s, \chi) L(1-s, \chi^{-1}), 
$$
where $\varepsilon(s, \chi)$ is an exponential function. (See \cite{neukirch} or \cite{weil-book}.) 
An integer $m$ is said to be {\it critical} for the Hecke $L$-function $L(s, \chi)$ if 
both $L_\infty(s, \chi)$ and $L_\infty(1-s, \chi^{-1})$ are regular (i.e., finite) at $s = m.$
We may say that the $\Gamma$-factors on either side of the functional equation do not have poles at $s = m$. 
The set of all critical integers for $L(s,\chi)$, denoted $\Crit(L(s,\chi))$, is given by the following 

\begin{prop}
\label{prop:critical}
Let $\chi$ be an algebraic Hecke character of a number field $F$ with infinity type $(n_\tau)_{\tau \in \Sigma_F}$, $n_\tau \in \Z$, satisfying the purity condition of Lem.\,\ref{lem:purity} with purity weight ${\sf w}$. Then the critical set of integers for $L(s, \chi)$ is given by:
\begin{enumerate}
\item[(i)] $F$ is totally real ($S_c = \emptyset$): Recall that $n_\tau = {\sf w}$ for all $\tau$. 
\smallskip  
  \begin{enumerate}
  \item If there exists $v_1, v_2 \in S_r$, such that $\epsilon_{v_1} \neq \epsilon_{v_2}$ then $\Crit(L(s,\chi)) = \emptyset.$
  \smallskip  
  \item If $\epsilon_v = 0$ for all $v \in S_r$ then $\Crit(L(s,\chi))$ is given by: 
  $$ \{\dots, \, -{\sf w}+1-2k, \, \dots, \, -{\sf w}-3, \, -{\sf w}-1; \, -{\sf w}+2, \, -{\sf w}+4, \, \dots, \, -{\sf w}+2k, \, \dots \}_{k \in \Z_{\geq 0}}.$$ 
  The critical set is centered at $\tfrac12 - {\sf w}$.
 \smallskip  
  \item If $\epsilon_v = 1$ for all $v \in S_r$ then $\Crit(L(s,\chi))$ is given by:
  $$\{\dots,-{\sf w}-2k, \dots, -{\sf w}-2, -{\sf w}; \, -{\sf w}+1, -{\sf w}+3, \dots, -{\sf w}+1+2k,\dots \}_{k \in \Z_{\geq 1}}.$$ 
  The critical set is centered at $\tfrac12 - {\sf w}$.
  \end{enumerate}
\medskip  
\item[(ii)] $F$ is totally imaginary ($S_r = \emptyset$):  Define the width of $\chi$ as the non-negative integer: 
$\ell = \ell(\chi) = \min\{|n_w - n_{\bar{w}}| \, : \, w \in S_c\}.$ Then ${\sf w} \equiv \ell \ {\rm mod} \ 2.$ 
We have: 
$$\Crit(L(s,\chi)) \ = \ \left\{ m \in \Z \ : \ 1 - \frac{\sf w}{2} - \frac{\ell}{2} \ \leq \ m \ \leq \ - \frac{\sf w}{2} + \frac{\ell}{2} \right\}.$$
The critical set, centered at $\tfrac{1- {\sf w}}{2}$, is a finite set of cardinality $\ell$. 
\medskip  
\item[(iii)] If $F$ has real and complex places ($S_r \neq \emptyset \neq S_c$), then 
$\Crit(L(s,\chi)) = \emptyset.$
\end{enumerate}
\end{prop}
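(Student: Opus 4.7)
The plan is to translate criticality of an integer $m$ into pointwise non-vanishing conditions on the archimedean Gamma factors in both $L_\infty(s,\chi)$ and $L_\infty(1-s, \chi^{-1})$, then case-split using purity. Recall the pole sets: $\Gamma_\R(s) = \pi^{-s/2}\Gamma(s/2)$ has simple poles exactly at $s \in \{0, -2, -4, \dots\}$, and $\Gamma_\C(s) = 2(2\pi)^{-s}\Gamma(s)$ has simple poles exactly at $s \in \{0, -1, -2, \dots\}$. The inverse character $\chi^{-1}$ has infinity type $(-n_\tau)_\tau$; since $-n \equiv n \pmod 2$, the same signs $\epsilon_v$ occur, and $|-n_w + n_{\bar{w}}| = |n_w - n_{\bar{w}}|$, so the factors of $L_\infty(1-s, \chi^{-1})$ are obtained from those of $L_\infty(s,\chi)$ by $s \mapsto 1-s$ together with sign-flipping the weight contributions inside the arguments.

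For case (i), Lem \ref{lem:purity}(i) gives $n_{\tau_v} = {\sf w}$ for every real place $v$. At each $v$ the local conditions for criticality of $m$ are $m + {\sf w} + \epsilon_v \notin 2\Z_{\leq 0}$ and $1 - m - {\sf w} + \epsilon_v \notin 2\Z_{\leq 0}$. Their sum $1 + 2\epsilon_v$ is odd, so exactly one argument is even and only it can produce a pole; criticality amounts to that even one being $\geq 2$. If two real places carry distinct $\epsilon_v$'s, the parities of $m$ required are opposite, proving (i)(a). When all $\epsilon_v$ coincide, a short case-split on $\epsilon_v \in \{0,1\}$ produces the two infinite arms of integers listed in (i)(b) and (i)(c), symmetric about $\tfrac{1}{2} - {\sf w}$ under the functional equation.

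For case (ii), Lem \ref{lem:purity}(ii) gives $n_{\tau_w} + n_{\bar\tau_w} = {\sf w}$ at each $w \in S_c$. Setting $a_w := |n_{\tau_w} - n_{\bar\tau_w}|/2$, and noting that ${\sf w}/2 + a_w$ is always an integer (because $n_{\tau_w} - n_{\bar\tau_w}$ and ${\sf w}$ have the same parity), the two complex factor conditions become the integer inequalities $m + {\sf w}/2 + a_w \geq 1$ and $(1-m) - {\sf w}/2 + a_w \geq 1$, i.e.\ $1 - {\sf w}/2 - a_w \leq m \leq a_w - {\sf w}/2$. Intersecting over $w$ and using $\min_w a_w = \ell/2$ produces the closed interval of $\ell$ integers centered at $(1-{\sf w})/2$.

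For case (iii), the hypothesis $S_r \neq \emptyset$ activates Lem \ref{lem:purity}(i), which forces $n_\tau = {\sf w}$ for every $\tau \in \Sigma_F$ (including complex-conjugate pairs); hence each $a_w = 0$, and the complex interval from case (ii) collapses to $[1 - {\sf w}/2,\, -{\sf w}/2]$ which contains no integer, so $\Crit(L(s,\chi)) = \emptyset$. The main obstacle is the parity bookkeeping in case (i)---keeping straight how the odd sum $1 + 2\epsilon_v$ forces exactly one of the two arguments to be pole-eligible---after which cases (ii) and (iii) follow quickly from purity.
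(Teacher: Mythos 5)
Your proposal is correct and takes essentially the same route the paper indicates: translating criticality of $m$ into pole conditions on the $\Gamma_\R$ and $\Gamma_\C$ factors of $L_\infty(s,\chi)$ and $L_\infty(1-s,\chi^{-1})$, then using purity for the parity bookkeeping in case (i) and the interval intersection in case (ii). The paper omits these details (deferring case (ii) to an external reference), and your write-up supplies exactly the computation it leaves as an exercise, including the key observation that the two real arguments sum to the odd number $1+2\epsilon_v$.
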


We omit the tedious proof of the above proposition and just add a few comments after which it is an extended exercise that is left to the reader. 
Recall that the poles of $\Gamma(s)$ are simple and located at non-positive integers, and furthermore that $\Gamma(s) \neq 0$ for all $s$. In case (i)(a), one sees the presence of a product 
$\Gamma(s/2)\Gamma((1+s)/2)$ because of which no integer can be critical. 
In case (i)(b), up to shifting by $-{\sf w},$  
it is the same as $\Crit(\zeta(s))$, the critical set of the Riemann zeta function. In case (i)(c), up to shifting by $-{\sf w},$ it is the same as the critical set of an odd classical Dirichlet character. For case (ii), the reader is referred to \cite[Sect.\,3.1]{raghuram-tot-imag} where the details are explicated. 
Case (iii), is like the intersection of cases (i) and (ii); by Lem.\,\ref{lem:purity}, $n_\tau = {\sf w}$ for all $\tau$; $\ell = 0$ where $\ell$ is defined using only $S_c$, forcing no critical points if there are real and complex places.  

\medskip
There has been a huge amount of work on the nature of the critical values of $L(m, \chi)$. Deligne's conjecture \cite{deligne-corvallis}
on the special values of motivic $L$-functions sets the stage for the sort of theorem one seeks. The interested reader is referred to 
Blasius \cite{blasius-annals}, Harder \cite{harder}, and Schappacher \cite{schappacher}, and all the references therein. My reason for 
writing this article is that while I was working on the critical values of automorphic $L$-functions (\cite{raghuram-CM} and \cite{raghuram-tot-imag}),  
I had to work through the results of this article frequently; and these results, although well-known to experts, did not seem to be readily available in the literature in the way I needed them.  There are some very deep sign problems, apparently hitherto not well-appreciated, that appear in these works which have their roots -- to paraphrase Weil~\cite{weil} -- in the remarkable properties of such algebraic Hecke characters.

\medskip
The above proposition begets the terminology of a {\it critical algebraic Hecke character} by which we mean an algebraic Hecke character $\chi$ such that 
$\Crit(L(s,\chi)) \neq \emptyset.$ Existence of a critical algebraic Hecke character $\chi$ of a number field $F$ implies first of all that $F$ is totally real or 
totally imaginary, and furthermore, when $F$ is totally real then the parities $\epsilon_v$ of all the local archimedean characters are equal. 
We will now discuss 
the existence of critical algebraic Hecke characters with prescribed infinity type subject to the purity condition.

\medskip
\subsection{Existence of (critical) algebraic Hecke characters}

For a given number field $F$ we discuss the problem of existence of algebraic Hecke characters that may or may not be critical. The purity lemma
naturally leads us to consider two cases (i) $S_r \neq \emptyset$ and (ii) $S_r = \emptyset$; we will see that case (ii) when $F$ is totally imaginary is 
in some sense far more interesting. 

\subsubsection{When $F$ has a real place ($S_r \neq \emptyset$)} 
From Cor.\,\ref{cor:alg_hecke_2_cases}, (i), we know that an algebraic Hecke character $\chi$ is necessarily of the form 
$$\chi \ = \ \chi^\circ |\!| \ |\!|^{\sf w},$$ for a character $\chi^\circ$ of finite order and for an integer ${\sf w}.$ We may fine-tune our question a little further to 
note that we can arrange for $\chi^\circ$ to have any prescribed signature at infinite places: for any (possibly empty) $S \subset S_r$, by weak approximation, we can find $x \in F^\times$ such that $v(x) > 0$ for $v \in S$, and $v(x) < 0$ for $v \in S_r \setminus S.$ Take $E = F(\sqrt{x})$, and 
$\chi = \omega_{E/F}$ the quadratic character of $F$ corresponding to $E$ by class field theory. Then $\chi_{v}(-1) = 1$ for $v \in S$ and 
$\chi_v(-1) = -1$ for $v \in S_r \setminus S.$ 
In particular, when $F$ is a totally real field, by taking $S = \emptyset$ or $S = S_r = S_\infty$, we
see that there exists critical algebraic Hecke characters of any prescribed parity (totally even or totally odd) 
and any prescribed purity weight ${\sf w}.$

\subsubsection{When $F$ is totally imaginary ($S_r = \emptyset$)} 
Consider an infinity type: ${\bf n} := (n_\tau)_{\tau \in \Sigma_F},$ $n_\tau \in \Z,$ satisfying the purity condition 
$n_{\gamma \circ \tau} + n_{\gamma \circ \bar\tau} = {\sf w}$ for all $\tau \in \Sigma_F$ and all $\gamma \in \GalQ$. 
The question we wish to address is whether there exists an algebraic Hecke character $\chi$ of $F$ with infinity type ${\bf n}.$ As 
Weil says in \cite{weil}, Artin pointed out to him that this is essentially an exercise in Galois theory. 

\medskip

First, consider the case when $F$ is a CM field, by which one means that $F$ is a totally imaginary quadratic extension of a totally real field $F^+.$ 
Let us note an easy 

\begin{lemma}
\label{lem:CM-field-embeddings}
Let $F$ be a totally imaginary quadratic extension of a totally real field $F^+$. 
Let $\tau : F \to \bar{\Q}$ and $\gamma \in \GalQ$ and let $\c$ stand for complex conjugation as an element of $\GalQ$.  
Then 
$
\gamma \circ \c \circ \tau \ = \  \c \circ \gamma \circ \tau,
$
i.e., complex conjugation and any automorphism of $\bar{\Q}$ commute on the image of a CM field. This also holds for a totally real field.  
\end{lemma}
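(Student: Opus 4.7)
\medskip
\noindent\textbf{Plan of proof.} The strategy is to exhibit a \emph{canonical} complex conjugation on $F$ itself (as an abstract field), independent of any chosen embedding, and then deduce the asserted commutation from its intrinsic character.

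First, I would dispose of the totally real case: if $F$ is totally real then for every embedding $\tau\colon F\to\bar\Q$ the image $\tau(F)$ lies in $\R$, so $\c$ fixes $\tau(F)$ pointwise and $\c\circ\tau=\tau$. Applying the same remark to the embedding $\gamma\circ\tau$ (whose image is again contained in $\R$ since it is the image of a totally real field under an embedding), we get $\c\circ\gamma\circ\tau=\gamma\circ\tau=\gamma\circ\c\circ\tau$.

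The substantive case is the CM case. Let $c_F$ denote the unique nontrivial element of $\Gal(F/F^+)$. The key intermediate claim is: for every embedding $\tau\colon F\to\bar\Q$,
$$
\c\circ\tau \ = \ \tau\circ c_F.
$$
To verify this I would write $F=F^+(\sqrt{-d})$ for some totally positive $d\in F^+$ and check the identity on the two pieces $F^+$ and $\sqrt{-d}\cdot F^+$. On $F^+$ both sides reduce to $\tau$: the left side because $\tau(F^+)\subset\R$ so $\c$ is trivial there, the right side because $c_F$ fixes $F^+$. On the generator $\sqrt{-d}$ we have $\tau(\sqrt{-d})^2=-\tau(d)<0$, so $\tau(\sqrt{-d})$ is purely imaginary, hence $\c(\tau(\sqrt{-d}))=-\tau(\sqrt{-d})=\tau(-\sqrt{-d})=\tau(c_F(\sqrt{-d}))$.

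With this intrinsic description in hand the lemma is immediate: applying the boxed identity to the embedding $\gamma\circ\tau$ (which is just another embedding of the CM field $F$ into $\bar\Q$) gives
$$
\c\circ(\gamma\circ\tau) \ = \ (\gamma\circ\tau)\circ c_F \ = \ \gamma\circ(\tau\circ c_F) \ = \ \gamma\circ(\c\circ\tau),
$$
which is the desired equality. I do not anticipate a genuine obstacle; the only point requiring care is the assertion that the non-canonical-looking $\c$ on $\bar\Q$ restricts to the canonical $c_F$ on the image of any CM embedding, and this is handled by the generator check above.
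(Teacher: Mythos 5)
Your proof is correct. It takes a genuinely different (and somewhat stronger) route than the paper's. The paper argues by a dichotomy: setting $\tau_1 = \gamma\circ\c\circ\tau$ and $\tau_2 = \c\circ\gamma\circ\tau$, it observes that both restrict to the same embedding of $F^+$, so either $\tau_1=\tau_2$ or $\tau_1=\c\circ\tau_2$, and rules out the second alternative because it would force $\tau(F)\subset\R$. You instead construct the canonical involution $c_F\in\Gal(F/F^+)$ and prove the intrinsic identity $\c\circ\tau=\tau\circ c_F$ for every embedding $\tau$, from which the commutation is formal. Your route proves more than is strictly needed, but the extra content is exactly the characterization of CM fields by an intrinsic complex conjugation that the paper itself invokes later (in the remark describing $F_1$ as the maximal subfield $K$ admitting an involution $\iota_K$ with $\c\circ\sigma=\sigma\circ\iota_K$ for all $\sigma$), so it arguably illuminates why the lemma is true rather than merely verifying it. The paper's argument is shorter and avoids choosing a presentation $F=F^+(\sqrt{-d})$; yours requires the (easy but worth stating, as you do) fact that a totally imaginary quadratic extension of a totally real field is generated by $\sqrt{-d}$ for a totally positive $d$. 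Both treatments of the totally real case agree.
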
  

\begin{proof}
 Let $\tau_1 = \gamma \circ \c \circ \tau$ and $\tau_2 = \c \circ \gamma \circ \tau.$ Then it is easy to that $\tau_1|_{F^+} = \tau_2|_{F^+}$ (since $F^+$ is totally real). This means that $\tau_1 = \tau_2$ or $\tau_1 = \c \circ \tau_2$; if the latter, then $\gamma \circ \c \circ \tau =  \gamma \circ \tau$, whence 
 $\c \circ \tau =  \tau$ or that $\tau$ lands inside $\R$ which is not possible. Hence $\tau_1 = \tau_2.$ The last assertion for a totally real field is obvious. 
\end{proof}

For a CM field $F$, consider an infinity type ${\bf n} := (n_\tau)_{\tau \in \Sigma_F},$ $n_\tau \in \Z,$ satisfying the purity condition 
$n_{\gamma \circ \tau} + n_{\gamma \circ \bar\tau} = {\sf w}$. The above lemma gives that $n_{\gamma \circ \bar\tau} = n_{\overline{\gamma \circ \tau}}$. Hence, purity for a CM field is the same as asking $n_\tau + n_{\bar\tau} = {\sf w}$ for all $\tau \in \Sigma_F.$

\begin{prop}
\label{prop:CM-field}
Let $F$ be a totally imaginary quadratic extension of a totally real field $F^+$. Let ${\bf n} := (n_\tau)_{\tau \in \Sigma_F},$ $n_\tau \in \Z,$ satisfying the purity condition $n_\tau + n_{\bar\tau} = {\sf w}$ for all $\tau \in \Sigma_F.$ Then there exists an algebraic Hecke character (of some modulus) with that infinity type. 
\end{prop}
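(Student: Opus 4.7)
My plan is to construct the character on the infinity side first (dictated by the data $\mathbf{n}$), verify the compatibility condition of Prop.\,\ref{prop:basic_gross}, and then assemble a \emph{Gr\"o\ss encharakter} $\psi$ mod some ideal $\m$. To this end, fix embeddings $\tau_w$ to identify each $F_w$ with $\C$ (for $w \in S_c$) and define $\psi_\infty' : F_\infty^\times \to \C^\times$ by
\[
\psi_\infty'(x_\infty) \ := \ \prod_{w \in S_c} x_w^{n_{\tau_w}} \bar{x}_w^{\, n_{\bar\tau_w}}.
\]
This is continuous, and when restricted via the diagonal embedding to any $a \in F^\times$ it evaluates to $\prod_{\tau \in \Sigma_F}\tau(a)^{n_\tau}$, so $\psi_\infty'$ has the prescribed infinity type $\mathbf{n}$.

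The key technical step is to show $\psi_\infty'(U_F)$ is \emph{finite}, which is where the CM hypothesis enters. For a CM field, Dirichlet's unit theorem gives $\mathrm{rank}(U_F) = r_2 = d_{F^+} - 1 = \mathrm{rank}(U_{F^+})$, so $[U_F : W_F \cdot U_{F^+}] \leq 2$. For $u \in U_{F^+}$, each embedding $\tau : F \to \C$ restricted to $F^+$ is a real embedding $\sigma$, and $\tau(u) = \bar\tau(u) = \sigma(u)$; using the purity relation $n_\tau + n_{\bar\tau} = {\sf w}$, we compute
\[
\psi_\infty'(u) \ = \ \prod_{\sigma : F^+ \to \R} \sigma(u)^{\,n_\tau + n_{\bar\tau}} \ = \ N_{F^+/\Q}(u)^{\sf w} \ = \ (\pm 1)^{\sf w}.
\]
On $W_F$, $\psi_\infty'$ takes values in the roots of unity. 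Hence the image $\psi_\infty'(U_F)$ is contained in the finite group generated by $W_F$ and $\{\pm 1\}$, so $\ker(\psi_\infty'|_{U_F})$ has finite index in $U_F$. A standard profinite argument (the subgroups $U_F^1(\m)$ form a neighborhood base of $1$ in $U_F$ for the topology induced by its finite-index subgroups, since $\bigcap_\m (1+\m) = \{1\}$) then produces an integral ideal $\m$ with $U_F^1(\m) \subset \ker(\psi_\infty'|_{U_F})$, i.e., $\psi_\infty'$ is trivial on $U_F^1(\m)$.

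With $\m$ in hand, the induced map $U_F/U_F^1(\m) \to \C^\times$ sending $u \mapsto \psi_\infty'(u)^{-1}$ is well-defined; since $U_F/U_F^1(\m) \hookrightarrow (\O_F/\m)^\times$ is an inclusion of finite abelian groups, this character extends (non-uniquely) to a character $\psi_f' : (\O_F/\m)^\times \to \C^\times$. By construction the pair $(\psi_f', \psi_\infty')$ satisfies the compatibility $\psi_f'(u \bmod \m)\,\psi_\infty'(u) = 1$ for all $u \in U_F$, so Prop.\,\ref{prop:basic_gross} yields a \emph{Gr\"o\ss encharakter} $\psi$ mod $\m$ with finite part $\psi_f'$ and infinite part $\psi_\infty'$. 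The associated Hecke character (via the dictionary in Prop.\,\ref{prop:exact-seq-dictionary}) is then algebraic with the prescribed infinity type $\mathbf{n}$.

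The main obstacle is the finiteness of $\psi_\infty'(U_F)$ in the second step; this is exactly where the CM structure is indispensable. The purity condition $n_\tau + n_{\bar\tau} = {\sf w}$ is tailored to collapse the contribution of $U_{F^+}$ into $N_{F^+/\Q}(\cdot)^{\sf w}$ precisely because $\tau$ and $\bar\tau$ restrict to the same embedding on the maximal totally real subfield. For a general totally imaginary $F$ without a totally real subfield of the same unit rank, $U_F$ contains units on which $\psi_\infty'$ can have infinite image, and the weaker purity condition $n_{\gamma\tau} + n_{\gamma\bar\tau} = {\sf w}$ no longer suffices --- which is why the existence problem in the non-CM totally imaginary case is genuinely more delicate.
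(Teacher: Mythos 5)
Your overall strategy coincides with the paper's: build the archimedean character $\psi_\infty'$ from ${\bf n}$, show it can be made trivial on $U_F^1(\m)$ for a suitable $\m$, extend the resulting character of $U_F/U_F^1(\m)$ to $(\O_F/\m)^\times$, and invoke Prop.\,\ref{prop:basic_gross}. Your computation that $\psi_\infty'(U_F)$ is finite --- via $[U_F : W_F\cdot U_{F^+}]\le 2$ and purity collapsing the contribution of $U_{F^+}$ to $N_{F^+/\Q}(u)^{\sf w}=\pm1$ --- is correct and is exactly the CM input.

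The gap is the passage from ``$\ker(\psi_\infty'|_{U_F})$ has finite index in $U_F$'' to ``$U_F^1(\m)\subset\ker(\psi_\infty'|_{U_F})$ for some $\m$.'' The justification you offer --- that $\bigcap_\m U_F^1(\m)=\{1\}$ makes the $U_F^1(\m)$ a neighborhood base for the topology of finite-index subgroups --- is not valid: a decreasing family of finite-index subgroups with trivial intersection need not be cofinal among all finite-index subgroups (in $\Z$ the family $\{p^n\Z\}_{n}$ has trivial intersection, yet no member lies in $q\Z$ for a second prime $q$). The statement that \emph{every} finite-index subgroup of $U_F$ contains some congruence subgroup $U_F^1(\m)$ is a genuine theorem (Chevalley), not a formal consequence of trivial intersection, so as written this step is unjustified. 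It can be repaired either by citing that theorem, or --- closer to the spirit of the proposition and to the paper's own argument --- by using the CM structure once more: for any $u\in U_F$ the element $u/\bar u$ is an algebraic integer all of whose archimedean absolute values equal $1$ (complex conjugation commutes with every embedding of a CM field), hence is a root of unity; writing $n_{\bar\tau_w}={\sf w}-n_{\tau_w}$ one finds that $\psi_\infty'(u)$ is a product of the quantities $\tau_w(u/\bar u)^{n_{\tau_w}}$ times a factor which, once $u=\bar u$, equals $N_{F^+/\Q}(u)^{\sf w}$. Choosing $\m$ stable under the CM involution (e.g.\ generated by a rational integer $N\ge 3$) and deep enough that no nontrivial root of unity is $\equiv 1\pmod{\m}$ forces $u/\bar u=1$ and $N_{F^+/\Q}(u)=1$ for all $u\in U_F^1(\m)$, whence $\psi_\infty'(u)=1$. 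This is in essence how the paper proceeds (it arranges $U_F^1(\m)\cap\mu_F=\{1\}$ and works with the unitary part $\prod_w e^{ip_w\theta_w}$); with this step repaired, the rest of your argument (extension of the character from $U_F/U_F^1(\m)$ to $(\O_F/\m)^\times$ and the appeal to Prop.\,\ref{prop:basic_gross}) is fine.
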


\begin{proof} 
If such a character exists, then the character at infinity would have the shape: 
$$
\chi_\infty(x_\infty) \ = \ \left(\prod_{w \in S_c} \left(\frac{x_w}{\bar{x}_w}\right)^{\tfrac{n_{\tau_w} - n_{\bar\tau_w}}{2}}\right) |x_\infty|_\infty^{{\sf w}/2}, 
$$
and also for some integral ideal $\m$, $\chi_\infty$ needs to be trivial on $U_F^1(\m).$ 
Put $p_w = n_{\tau_w} - n_{\bar\tau_w}$, and write $x_w$ in polar form as $r_w e^{i \theta_w}.$ Then we need to construct a unitary 
Hecke character $\chi^u : \I_F/F^\times \to S^1$, such that $\chi^u_\infty(x_\infty) = \prod_w e^{i p_w \theta_w},$ and $\chi^u_\infty$ 
needs to be trivial on $U_F^1(\m).$ We can then take $\chi = \chi_u |\!| \ |\!|^{{\sf w}/2}.$ Note that $F_\infty^\times = (S^1 \times \cdots \times S^1) \times (\R^\times_+ \times \cdots \times \R^\times_+).$ 
The units $U_F$ (via the proof of Dirichlet's unit theorem) intersects with the compact part in roots of unity $\mu_F$ in $F^\times$. Take an integral ideal 
$\m$ such that $U_F^1(\m) \cap \mu_F = \{1\}$; hence,  
$U_F^1(\m) \cap (S^1 \times \cdots \times S^1) = \{1\}$. Define a unitary character $\psi^u_\infty$ that maps  
$x_\infty$ to $\prod_w e^{i p_w \theta_w}.$ Then $\psi^u_\infty$ defines a character of $F_\infty^\times/U_F^1(\m).$ Take any character 
$\psi^u_f$ of $(\O_F/\m)^\times$, and by Prop.\,\ref{prop:basic_gross} we get a {\it Gr\"o\ss encharakter} $\psi^u$ mod $\m$ with the necessary infinity type. 
Take $\chi^u$ to be the corresponding Hecke character as in the paragraph after Prop.\,\ref{prop:exact-seq-dictionary}. 
\end{proof}

\medskip

Next, consider the case of a general totally imaginary field $F$. There is a maximal CM or totally real subfield $F_1$ of $F$ that plays an important role. 
Following Weil \cite{weil}, let $F_0$ be the largest totally real subfield of $F$. Then $F_0$ admits at most one totally imaginary quadratic extension contained inside $F_0$. If $F$ does admit such a CM subfield, denote it as $F_1$; and if there is no such extension inside $F$, then put $F_1 = F_0.$
A basic property of a pure infinity type of a totally imaginary field is that it is the base-change of a pure infinity type of $F_1$ as explained by the following 

\begin{prop}
\label{prop:descend-to-F_1}
Let $F$ be a totally imaginary field. Consider an infinity type ${\bf n} := (n_\tau)_{\tau \in \Sigma_F},$ $n_\tau \in \Z,$ satisfying the purity condition 
$n_{\gamma \circ \tau} + n_{\gamma \circ \bar\tau} = {\sf w}$ for all $\tau \in \Sigma_F$ and all $\gamma \in \GalQ.$ Then $n_\tau$, as a function of 
$\tau$ depends only on the restriction of $\tau$ to $F_1,$ i.e., there exists a necessarily pure infinity type ${\bf m} := (m_{\tau_1})_{\tau_1 \in \Sigma_{F_1}}$
for the field $F_1$, such that if $\tau \in \Sigma_F$ with $\tau_1 = \tau|_{F_1}$ then $n_{\tau}  = m_{\tau_1}.$ (In this situation, we may say 
that ${\bf n}$ is the base-change of ${\bf m}$.)   
\end{prop}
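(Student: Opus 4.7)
The plan is to produce a subgroup $N\subset\GalQ$ such that ${\bf n}$ is $N$-invariant on $\Sigma_F$ and such that the fibers of $\Sigma_F\to\Sigma_{F_1}$ are contained in the $N$-orbits. Substituting $\sigma=\gamma\tau$ and $c'=\gamma c\gamma^{-1}$ into the purity identity of Lem.\,\ref{lem:purity}(ii) recasts it in the equivalent form $n_\sigma+n_{c'\sigma}=\mathsf{w}$ valid for every $\sigma\in\Sigma_F$ and every element $c'$ in the conjugacy class of the standard complex conjugation $c$. Applying this twice gives $n_{c''c'\sigma}=n_\sigma$, so if $N\subset\GalQ$ denotes the normal subgroup generated by all products $c''c'$ of pairs of complex conjugations, then ${\bf n}$ is $N$-invariant.

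The hard step---and the core of the argument---will be to identify the fixed field $\bar\Q^N$ with the compositum $\Q^{\rm cm}$ of all CM and totally real subfields of $\bar\Q$. The inclusion $\Q^{\rm cm}\subset\bar\Q^N$ is immediate from Lem.\,\ref{lem:CM-field-embeddings}: for any embedding $\iota$ of a CM or totally real field $L$, the identity $\gamma c\iota=c\gamma\iota$ says precisely that every conjugate $\gamma c\gamma^{-1}$ agrees with $c$ on $\iota(L)$. For the reverse inclusion, starting from $x\in\bar\Q^N$ the condition that all conjugations agree on $x$ is equivalent to $\sigma(c(x))=c(\sigma(x))$ for every $\sigma\in\GalQ$; setting $L:=\Q(x,c(x))$, each image $\sigma(L)=\Q(\sigma(x),c(\sigma(x)))$ is therefore $c$-stable. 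The fixed subfield $L^{c=1}$ sits inside the maximal totally real subfield of $\bar\Q$ (any $y\in L^{c=1}$ is in $\bar\Q^N$ and fixed by $c$, hence fixed by every complex conjugation) and is totally real; moreover $L/L^{c=1}$ has degree at most~$2$, and in the degree $2$ case any real embedding $\sigma$ would force $c(x)=\sigma^{-1}(c(\sigma(x)))=\sigma^{-1}(\sigma(x))=x$, contradicting $L\neq L^{c=1}$, so $L$ is totally imaginary and hence CM. By the Galois correspondence, $NH^\tau$ with $H^\tau:=\Gal(\bar\Q/\tau(F))$ has fixed field $\tau(F)\cap\Q^{\rm cm}$.

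With $\bar\Q^N$ thus identified, the descent is immediate: for $x=\tau(y)\in\tau(F)\cap\Q^{\rm cm}$, the field $\Q(x)\subset\Q(x,c(x))$ is a subfield of a CM or totally real field, hence itself CM or totally real, so via the isomorphism $\tau:\Q(y)\xrightarrow{\sim}\Q(x)$ the abstract field $\Q(y)\subset F$ is CM or totally real, and the maximality of $F_1$ forces $y\in F_1$. Thus $\tau(F)\cap\Q^{\rm cm}\subset\tau(F_1)$, which gives $H_1^\tau:=\Gal(\bar\Q/\tau(F_1))\subset NH^\tau$. Any $\gamma\in H_1^\tau$ therefore decomposes as $\gamma=\nu h$ with $\nu\in N$ and $h\in H^\tau$, yielding $\gamma\tau=\nu\tau$ and $n_{\gamma\tau}=n_\tau$ by $N$-invariance; so $m_{\tau_1}:=n_\tau$ is well-defined for any extension $\tau$ of $\tau_1$. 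Purity of ${\bf m}$ is then inherited from that of ${\bf n}$: for $\gamma_1\in\GalQ$ the embedding $\gamma_1\bar\tau$ restricts on $F_1$ to $\gamma_1\bar{\tau_1}$ by Lem.\,\ref{lem:CM-field-embeddings} applied to $F_1$, whence $m_{\gamma_1\tau_1}+m_{\gamma_1\bar{\tau_1}}=n_{\gamma_1\tau}+n_{\gamma_1\bar\tau}=\mathsf{w}$.
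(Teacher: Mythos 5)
Your proof is correct, and at the decisive step it takes a genuinely different route from the paper's. Both arguments extract from purity the invariance of ${\bf n}$ under the same normal subgroup $\cN\subset\GalQ$ generated by products of two complex conjugations; your recasting of purity as $n_\sigma+n_{\c'\sigma}={\sf w}$ for every conjugate $\c'$ of $\c$ is a tidier way to get there than the paper's chain of identities for the function $n(\gamma,\tau)$. Where you diverge is in showing that the fibers of $\Sigma_F\to\Sigma_{F_1}$ lie inside single $\cN$-orbits: the paper merely asserts that $\cN$ acts transitively on these fibers, with only a pointer to Lem.\,\ref{lem:CM-field-embeddings}, whereas you actually prove it by identifying $\bar\Q^{\cN}$ with the compositum $\Q^{\rm cm}$ of all CM and totally real subfields of $\bar\Q$, deducing $\tau(F)\cap\Q^{\rm cm}=\tau(F_1)$ from the maximality of $F_1$, and concluding $\Gal(\bar\Q/\tau(F_1))=\cN\cdot\Gal(\bar\Q/\tau(F))$ by the Galois correspondence. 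This buys a complete justification of the step the paper leaves implicit, at the cost of two standard facts you invoke without proof: that a subfield of a CM (or totally real) field is again CM or totally real (needed to pass from $\Q(x,\c(x))$ to $\Q(x)$), and, implicitly, that $\cN H^\tau$ is a closed subgroup so that the infinite Galois correspondence applies --- the latter is automatic because $H^\tau=\Gal(\bar\Q/\tau(F))$ is open, $\tau(F)/\Q$ being finite. Your closing verification that ${\bf m}$ is pure is also fine (and does not really need Lem.\,\ref{lem:CM-field-embeddings}; it is just compatibility of restriction with composition by $\c$ and $\gamma_1$).
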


\begin{proof}
(
I learnt of the following proof from Deligne; it appears in a different context in \cite{deligne-raghuram}.) The purity condition $n_{\gamma \circ \tau} + n_{\gamma \circ \bar\tau} = {\sf w},$ for all 
$\tau \in \Sigma_F$ and all $\gamma \in \GalQ,$ also implies that 
$n_{\gamma \circ \tau} + n_{\overline{\gamma \circ \tau}} = {\sf w}$, from which deduce 
$$
n_{\gamma \circ \bar\tau} \ = \ n_{\overline{\gamma \circ \tau}}, \quad \forall \tau, \ \forall \gamma.
$$ 
Consider the function $n : \GalQ \times \Sigma_F \to \Z$, defined as $n(\gamma, \tau) = n_{\gamma \circ \tau}.$ For any $x \in \GalQ$ we have 
$n(\gamma x, \tau) = n(\gamma, x \tau).$ The above displayed equation reads
$$
n(\gamma, \c \tau) \ = \ n(\c , \gamma \tau). 
$$
Replacing $\tau$ by $\gamma^{-1}\tau$, we also have: 
$$
n(\c , \tau) \ = \ n(\gamma, \c \gamma^{-1} \tau) \ = \ n(\gamma \c \gamma^{-1}, \tau).
$$
Replacing $\tau$ by $\c \tau$ we get
$$
n(1, \tau) \ = \ n(\gamma \c \gamma^{-1}\c, \tau). 
$$
Now, take any $x \in \GalQ$, and we see using the above relations (for all $\gamma$ and $\tau$) that:
\begin{multline*}
n(x \gamma \c \gamma^{-1}\c x^{-1}, \tau) \ = \ 
n(x \gamma \c \gamma^{-1} x^{-1} x \c x^{-1}, \tau) \ = \ 
n((x \gamma) \c (x \gamma)^{-1}, x \c x^{-1} \tau) \ = \\ 
n(\c, x \c x^{-1} \tau) \ = \ n(x \c x^{-1}, \c \tau) \ = \ n(x \c x^{-1} \c, \tau) \ = \ n(1, \tau).
\end{multline*}
In other words, $n_{h \tau} = n_\tau$ for all $h$ in the normal subgroup $\cN$ of $\cG := \GalQ$ generated by $\{\gamma \c \gamma^{-1} \c : \gamma \in \GalQ\}$. 
Note that $\cG/\cN$ is the largest quotient in which $\c$ is central. 
We see that $\cG$ acts transitively on $\Sigma_F;$ the transitive action of $\cG$ on $\Sigma_{F_1}$ is via $\cG/\cN$ (Lem.\,\ref{lem:CM-field-embeddings});  
and $\cN$ acts transitively on the fiber over $\tau_1 \in \Sigma_{F_1}$ in the restriction map $\Sigma_F \to \Sigma_{F_1}.$ In particular, 
if $\tau, \tau' \in \Sigma_F$ with $\tau_1 = \tau|_{F_1} = \tau'|_{F_1}$, and suppose $h \in \cN$ is such that $h \tau = \tau'$, then 
$n_{\tau'} = n_{h \tau} = n_{\tau},$ and we call this common value as $m_{\tau_1}.$ 
\end{proof}

\medskip
We are now ready to state the basic fact about the existence of algebraic Hecke characters for a totally imaginary field in the following

\begin{prop}
\label{prop:exist_alg_hecke_tot_imag}
Let $F$ be a totally imaginary field. Consider an infinity type ${\bf n} := (n_\tau)_{\tau \in \Sigma_F},$ $n_\tau \in \Z,$ satisfying the purity condition 
$n_{\gamma \circ \tau} + n_{\gamma \circ \bar\tau} = {\sf w}$ for all $\tau \in \Sigma_F$ and all $\gamma \in \GalQ.$ 
Then, there exists an algebraic Hecke character of infinity type ${\bf n}.$ 

\smallskip
\begin{enumerate}
\item[(i)] If $F_0 = F_1$ (this is the case when $F$ has no CM subfield), then there exists $n \in \Z$ such that $n_\tau = n$ for all $\tau \in \Sigma_F$. Any 
algebraic Hecke character $\chi$ of infinity type ${\bf n}$ is of the form $\chi = \chi^\circ |\!|\ |\!|^n$ for a Dirichlet character $\chi^\circ$ of $F$. 

\smallskip
\item[(ii)] If $F_1$ is a CM field, then ${\bf n}$ is the base-change of an infinity type ${\bf m}$ for $F_1$ (Prop.\,\ref{prop:descend-to-F_1}); any 
algebraic Hecke character $\chi$ of $F$ of infinity type ${\bf n}$ is of the form
$$
\chi \ = \ \chi_1 \circ N_{F/F_1} \otimes \chi^\circ, 
$$
for some algebraic Hecke character $\chi_1$ of $F_1$ with infinity type ${\bf m}$ and some Dirichlet character $\chi^\circ$ of $F$. 
\end{enumerate}
\end{prop}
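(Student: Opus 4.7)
The plan is to treat the two cases separately, using Prop.\,\ref{prop:descend-to-F_1} to descend ${\bf n}$ to an integer-valued function ${\bf m}$ on $\Sigma_{F_1}$ and then reducing to already-established constructions. Throughout I will use the following observation: a continuous character $\chi^\circ$ on $\I_F/F^\times$ with trivial infinity type is of finite order; indeed, its restriction to $\prod_\p U_\p$ has finite image by the no-small-subgroups argument, so it is trivial on $\U_{F,f}(\m)$ for some $\m$, and combined with triviality on $F_\infty^\times$ it factors through the finite quotient $\I_F/(F^\times \cdot F_\infty^\times \cdot \U_{F,f}(\m))$ of a narrow ray class group (compare Prop.\,\ref{prop:finite-index-subgp}), i.e., $\chi^\circ$ is a Dirichlet character.

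\textbf{Case (i):} $F_1 = F_0$ is totally real. Since $F_1$ is totally real, $\bar\tau|_{F_1} = \tau|_{F_1}$ for every $\tau \in \Sigma_F$, so $n_\tau = n_{\bar\tau}$ by Prop.\,\ref{prop:descend-to-F_1}. Purity then forces $2n_\tau = {\sf w}$, whence ${\sf w}$ is even and $n_\tau = n := {\sf w}/2$ for all $\tau$. The adelic norm character $|\!|\ |\!|^n$ on $\I_F/F^\times$ evaluates at a complex place $w$ to $(z_w\bar z_w)^n = z_w^n \bar z_w^n$, so it is an algebraic Hecke character realizing ${\bf n}$. Any other $\chi$ of infinity type ${\bf n}$ satisfies that $\chi^\circ := \chi \cdot |\!|\ |\!|^{-n}$ has trivial infinity type, hence is a Dirichlet character by the opening observation, giving $\chi = \chi^\circ |\!|\ |\!|^n$.

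\textbf{Case (ii):} $F_1$ is a CM subfield of $F$. Let ${\bf m} = (m_{\tau_1})_{\tau_1 \in \Sigma_{F_1}}$ be the descended infinity type; since $\bar\tau|_{F_1} = \overline{\tau|_{F_1}}$, the purity condition $n_\tau + n_{\bar\tau} = {\sf w}$ transports to the CM purity $m_{\tau_1} + m_{\bar\tau_1} = {\sf w}$ on $F_1$. By Prop.\,\ref{prop:CM-field}, there is an algebraic Hecke character $\chi_1$ of $F_1$ with infinity type ${\bf m}$. Set $\chi := \chi_1 \circ N_{F/F_1}$, where $N_{F/F_1}: \I_F/F^\times \to \I_{F_1}/F_1^\times$ is the adelic norm. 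Using the factorization $\tau_1(N_{F/F_1}(x)) = \prod_{\tau|_{F_1} = \tau_1} \tau(x)$ at the archimedean component,
$$
\chi_\infty(x_\infty) \ = \ \prod_{\tau_1 \in \Sigma_{F_1}} \tau_1(N_{F/F_1}(x_\infty))^{m_{\tau_1}} \ = \ \prod_{\tau \in \Sigma_F} \tau(x_\infty)^{m_{\tau|_{F_1}}} \ = \ \prod_{\tau \in \Sigma_F} \tau(x_\infty)^{n_\tau},
$$
so $\chi$ realizes the infinity type ${\bf n}$. Any other $\chi'$ with the same infinity type differs from $\chi$ by a character of trivial infinity type, hence by a Dirichlet character $\chi^\circ$, producing the claimed product decomposition $\chi' = \chi_1 \circ N_{F/F_1} \otimes \chi^\circ$.

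\textbf{The main obstacle} is the infinity-type computation for $\chi_1 \circ N_{F/F_1}$ in case (ii); one must combine the base-change identification $n_\tau = m_{\tau|_{F_1}}$ from Prop.\,\ref{prop:descend-to-F_1} with the factorization of the norm through the $[F:F_1]$ embeddings of $F$ over $F_1$ to match the prescribed exponents exactly. Verifying also that ${\bf m}$ is indeed a pure infinity type on $F_1$ (needed to invoke Prop.\,\ref{prop:CM-field}) requires the elementary but essential fact $\bar\tau|_{F_1} = \overline{\tau|_{F_1}}$. Everything else -- existence via $|\!|\ |\!|^n$ or $\chi_1 \circ N_{F/F_1}$, and structural uniqueness modulo Dirichlet characters -- then follows cleanly from the finite-order argument summarized at the outset.
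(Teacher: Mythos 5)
Your proof is correct and follows essentially the same route as the paper: descend ${\bf n}$ to ${\bf m}$ on $F_1$ via Prop.\,\ref{prop:descend-to-F_1}, construct a character of the right infinity type as $|\!|\ |\!|^n$ (case (i)) or as $\chi_1 \circ N_{F/F_1}$ with $\chi_1$ supplied by Prop.\,\ref{prop:CM-field} (case (ii)), and observe that any two Hecke characters with the same infinity type differ by a finite-order (Dirichlet) character. You merely make explicit two points the paper treats as clear, namely the finite-order argument for characters with trivial infinity type and the computation of the infinity type of the base-change via the factorization of the norm over the fiber of $\Sigma_F \to \Sigma_{F_1}$.
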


\begin{proof}
In both cases from Prop.\,\ref{prop:descend-to-F_1} there exists an infinity type ${\bf m}$ over $F_1$ whose base-change to $F$ is ${\bf n}$. 
In case (i), since 
$F_1 = F_0$ is totally real, all the $m_{\tau_1}$ are equal, and hence $n_\tau$ are all equal, to say, $n$. If there is an algebraic Hecke character $\chi$ of 
this infinity type then necessarily we have $\chi  |\!|\ |\!|^{-n}$ is a finite-order character, say, $\chi^\circ.$ To construct $\chi$, take any finite-order Hecke character 
$\chi^\circ$ of $F$, and put $\chi = \chi^\circ |\!|\ |\!|^n$. 
In case (ii), from Prop.\,\ref{prop:CM-field}, there exists an algebraic Hecke character $\chi_1$ of $F_1$ with infinity type ${\bf m}.$ Consider
its base-change $\chi_2 := \chi_1 \circ N_{F/F_1},$ to $F$; it is clear that this base-change has the same infinity type as $\chi$, if one knew the existence of $\chi$, and in which case $\chi \chi_2^{-1}$ would be a character of finite-order, say, $\chi^\circ.$ Going backwards, construct the required $\chi$ via 
$\chi_1 \circ N_{F/F_1} \otimes \chi^\circ.$ 
\end{proof}

\medskip

I have found it helpful to keep some examples in mind while thinking of a totally imaginary field $F$, with various possible scenarios for $F_0$ and $F_1$. Some of these examples were suggested to me by Haruzo Hida. 

\begin{exam}{\rm 
When $F$ is totally imaginary but not CM, then one has to be careful about what constitutes a pure infinity type. The following example is instructive: 
 take $F = \Q(2^{1/3},\omega),$ where $2^{1/3}$ is the real cube root of $2$ and $\omega = e^{2\pi i/3}.$ Then $\Sigma_F = \Gal(F/\Q) \simeq S_3$ the permutation group in 3 letters taken to be $\{2^{1/3}, \, 2^{1/3}\omega, \, 2^{1/3}\omega^2\}.$ Let $s \in S_3$ correspond to $\tau_s : F \to \C.$ Consider the 
 infinity types ${\bf n} = (n_{\tau_s})_{s \in S_3}$ and ${\bf n}' = (n'_{\tau_s})_{s \in S_3}$: 
$$
\begin{array}{|c||c|c|c|c|c|c||} 
\hline
s & e & (12) & (23) & (13) & (123) & (132) \\ \hline 
n'_{\tau_s} & a & b & {\sf w}-a & c & {\sf w}-c & {\sf w}-b \\ \hline 
n_{\tau_s} & a & {\sf w}-a & {\sf w}-a & {\sf w}-a & a & a \\ \hline
\end{array}
$$
where $a,b,c,{\sf w} \in \Z$. For the tautological embedding $F \subset \bar\Q,$ the set $\Sigma_F$ is paired into complex conjugates as 
$\{(\tau_{e}, \tau_{(23)}), (\tau_{(12)}, \tau_{(132)}), (\tau_{(13)}, \tau_{(123)})\}.$ Then, $n'_\tau + n'_{\bar\tau} = {\sf w}.$ However, 
${\bf n}'$ is not a pure infinity type in general. All other possible pairings of $\Sigma_F$ into conjugates via automorphisms of $\bar\Q$ 
($F$ being Galois this simply boils down to composing these embeddings $\tau_s$ by a fixed one $\tau_{s_0},$ and using 
$\tau_{s_0} \circ \tau_{s} = \tau_{s_0s}$) are given by: 
$\{(\tau_{e}, \tau_{(12)}), (\tau_{(23)}, \tau_{(123)}), (\tau_{(13)}, \tau_{(132)})\},$ and  
$\{(\tau_{e}, \tau_{(13)}), (\tau_{(23)}, \tau_{(132)}), (\tau_{(12)}, \tau_{(123)})\},$
from which we see that ${\bf n}'$ is not a pure infinity type unless ${\sf w}-a, b$ and $c$ are all equal; ${\bf n}$ is pure and has purity weight ${\sf w}.$ 
Also, ${\bf n}$ is the base-change of the infinity type ${\bf m}$ of $F_1 = \Q(\omega)$ where $m_{\tau_e} = a$ and $m_{\tau_{(12)}} = {\sf w}-a.$ 
}\end{exam}

\begin{exam}{\rm 
Take a CM field $F^\cm$, and a number field $L$ that is linearly disjoint from $F^\cm$, and put $F = L \cdot F^\cm.$ 
This is a particularly pleasant example of a totally imaginary field. If $L$ is totally real, then $F$ is also of CM type. 
For concreteness, take $n \geq 3$, then $F = \Q(2^{1/n}, \zeta_n)$, where $\zeta_n = e^{2\pi i/n},$ is a totally imaginary field of this form; $F^\cm = \Q(\zeta_n)$ and $L = \Q(2^{1/n}).$ This example is slightly more general than the field considered in the previous example. 
}\end{exam} 

\begin{exam}\label{exam:hida}
{\rm 
For a totally imaginary field not of the above form, take $F_0 = \Q$, and put $F_1 = \Q(\i)$ and $F = \Q(\sqrt{4+\i}, \i).$ (This example is used in 
Sect.\,\ref{sec:counterexample}.)
}\end{exam}

\begin{exam}{\rm 
For a totally imaginary field with no CM subfield, take $F_0$ to be a cubic totally real field; for example, $F_0 = \Q(\zeta_3 + \zeta_3^{-1}).$ 
Choose non-squares $a$  and  $b$ in $F_0$,  with conjugates  $a, a', a''$  and  $b, b', b''$, respectively, and chosen so that 
$ a>0, a'<0, a''<0$ and $b<0, b'<0, b''>0.$  Such a choice is possible by weak approximation. 
Then the field $F= F_0[\sqrt{a},\sqrt{b}]$  is totally imaginary with no CM subfield. Such fields do not support critical algebraic Hecke characters since 
the width of a pure infinity type is $0$ (see Prop.\,\ref{prop:critical}).
}\end{exam}

\begin{rem}{\rm 
There is a slightly different way to organise one's thoughts about the subfields $F_0$ and $F_1$ of a totally imaginary field $F$, and which 
at times is useful to keep in mind. 
Recall that $\c$ denote the complex conjugation on $\C$. 
Let $F_1$ be maximal among all subfields $K$ of $F$ that have an involution $\iota_K : K \to K$ such that $\c \circ \sigma = \sigma \circ \iota_K$ for 
all embeddings $\sigma : K \to \bar\Q.$ In particular, $F_1$ has its involution $\iota_{F_1},$ and let $F_0$ 
denote the subfield of $F_1$ that is fixed by $\iota_{F_1}.$ There are two cases: 
(i) $\iota_{F_1}$ is non-trivial; then $F_1$ is a totally imaginary quadratic extension of the totally real $F_0;$ 
(ii) $\iota_{F_1}$ is trivial; then $F_1 = F_0$ is a totally real field. 
}\end{rem}

\bigskip
\section{\bf Cohomological automorphic representations of $\GL_1$}

So far, we have treated Hecke characters as complex valued functions on ad\`elic spaces, and one might say that this is from the point of view of 
{\it automorphic forms} on $\GL(1).$ We may also treat them as {\it algebro-geometric} objects; see, for example, Deligne \cite{deligne-sga} or Serre \cite{serre}. Of course, these points of view are inter-related for which the reader is recommended to see Schappacher \cite[Chap.\,0]{schappacher} for a lucid summary. In this section we consider Hecke characters from the point of view often taken in the {\it cohomology of arithmetic groups}. 
We begin by setting up the context, adopting the treatment in \cite[Chap.\,2]{harder-raghuram-book}.

\bigskip
\subsection{Locally symmetric spaces for $\GL_1/F$}
Let $G = \Res_{F/\Q}(\GL_1/F)$. Then: 

\smallskip
$G(\Q) = \GL_1(F) = F^\times;$ 

$G(\A) = \GL_1(\A_F) = \I_F;$ 

$G(\A_f) = \A_{F,f}^\times$ the group of finite id\`eles;  

$G(\R) = \GL_1(F \otimes \R) = F_\infty^\times = \prod_{v \in S_r} \R^\times \times \prod_{w \in S_c} \C^\times;$ 

$G(\R)^\circ = F_{\infty +}^\times$, the connected component of the identity in $G(\R);$

$K_\infty = \prod_{v \in S_r} \{\pm 1\} \times \prod_{w \in S_c} S^1,$ the maximal compact subgroup of $G(\R);$ 

$K_\infty^\circ \simeq \prod_{w \in S_c} S^1$, the connected component of the identity in $K_\infty;$

$A = \GL_1/\Q \hookrightarrow G/\Q$ is the canonical inclusion in $\Hom(\GL_1/\Q, \Res_{F/\Q}(\GL_1/F));$

$A(\R)$ is the diagonal copy of $\R^\times$ inside $G(\R) = \prod_{v \in S_r} \R^\times \times \prod_{w \in S_c} \C^\times;$

$A(\R)^\circ = \R_+^\times$, the connected component of the identity in $A(\R);$

$\m = \prod_\p \p^{m_\p}$, an integral ideal of $F$, which gives us a level structure, by which we mean:

$K_\m \subset G(\A_f),$ an open compact subgroup: $K_\m := \U_{F,f}(\m) = \prod_{\p | \m} (1+\p^{m_\p}) \times \prod_{\p \not\,| \m} U_\p.$ 

\medskip
\noindent

There are various quotients of $\I_F$ that one may consider depending on how much to divide by inside $G(\R)$. Here are five possible situations: 

\begin{subequations}
\label{eqn:loc-symm-space}
\begin{align}
S^G_\m & \ := \ G(\Q) \backslash G(\A) / K_\m.  \\
X^G_\m & \ := \ G(\Q) \backslash G(\A) / A(\R)^\circ K_\m.  \\ 
Y^G_\m  & \ := \ G(\Q) \backslash G(\A) / K_\infty^\circ K_\m.  \\ 
T^G_\m & \ := \ G(\Q) \backslash G(\A) / A(\R)^\circ K_\infty^\circ K_\m.  \\
C^G_\m & \ := \ G(\Q) \backslash G(\A) / G(\R)^\circ K_\m. 
\end{align}
\end{subequations}

\smallskip
Of course, $G$ being abelian, the double-coset-spaces are all quotients of $\I_F$; for example
$$
C^G_\m \ = \ G(\A)/(G(\Q) G(\R)^\circ K_\m) \ = \ \I_F/(F^\times \U_F(\m)) \ \approx \ \Cl^+_F(\m),
$$   
the narrow ray class group modulo $\m.$ Writing as a double-coset space is simply to suggest how things would generalize if $G$ is taken to be a general reductive group over $\Q$.  There are the canonical projection maps:
\begin{equation}
\xymatrix{
 & S^G_\m \ar[rd] \ar[ld] & \\
X^G_\m \ar[rd]  & & Y^G_\m \ar[ld] \\
 &  T^G_\m \ar[d] & \\
 & C^G_\m \approx \Cl^+_F(\m) & 
}
\end{equation}
The bottom most is a finite-abelian group being a narrow ray class group. 
To get a feel for the space $S^G_\m$ at the top, think of 
$$
G(\A)/ K_\m \ = \ G(\R) \times (G(\A_f)/K_\m),
$$ 
as the product of $G(\R)$ with the totally disconnected space $G(\A_f)/K_\m$; 
on which the discrete subgroup $G(\Q)$--it is discrete in $G(\A)$--acts diagonally, and taking the quotient by that action we have 
$G(\Q) \backslash (G(\R) \times (G(\A_f)/K_\m)).$ Note
that $G(\Q)\backslash G(\A_f)/K_\m$ is a finite set: 
$$
G(\Q)\backslash G(\A_f)/K_\m \ = \ \A_{F,f}^\times/(F^\times \U_{F,f}(\m)) \ \hookrightarrow \ \I_F/(F^\times (F_{\infty +}^\times \U_{F,f}(\m))) \ = \ \Cl_F^+(\m).
$$
There exists a finite set $\{x_{1,f}, \dots, x_{k,f}\}$ of finite id\`eles such that $\A_{F,f}^\times = \cup_{i=1}^k x_{i,f} F^\times \U_{F,f}(\m).$
Using these representatives a bijection 
$$
S^G_\m \ \simeq \ \coprod_{i=1}^k (F_\infty^\times/U_F^1(\m))x_{i,f}   
$$
can described as follows: for $x = (x_\infty, x_f) \in \I_F = F_\infty^\times \times \A_{F,f}^\times$, there is a unique $i$ such that $x_f = x_{i,f} \, \gamma \, u_f$ for 
some $\gamma \in F^\times$ and $u_f \in \U_{F,f}(\m)$; the bijection is given by: 
$$
S^G_\m \ni x(F^\times \U_{F,f}(\m)) \ \mapsto \ (\gamma^{-1}x_\infty U_F^1(\m), x_{i,f}) \in (F_\infty^\times/U_F^1(\m))x_{i,f}. 
$$
Let us describe the connected components of $S^G_\m$, which are all copies of  
$$
\frac{F^\times_{\infty +}}{U_F^1(\m)} \ \simeq \ \frac{(\R^\times_+)^{r_1} \times (\C^\times)^{r_2}}{U_F^1(\m)}.
$$ 
Let ${}^0 F^\times_{\infty +}$ denote the kernel of norm at infinity $|\!| \ |\!|_\infty : F^\times_{\infty +} \to \R^\times_+.$ Then
$$
F^\times_{\infty +} \ = \ {}^0 F^\times_{\infty +} \times A(\R)^\circ.
$$ 
By the product formula, $F^\times_{\infty +}$ contains $U_F^1(\m)$. Note that if 
$u \in U_F^1(\m) \cap A(\R)^\circ$ then $\sigma(u) = \sigma'(u)$ for all $\sigma, \sigma' \in \Sigma_F$, hence $u \in \Q^\times$; whence $u \in \{\pm 1\}.$ 
Assume the mild condition that $U_F^1(\m) \cap \{\pm 1\} = \{1\}.$ Hence we get
$$
\frac{F^\times_{\infty +}}{U_F^1(\m)} \ = \  A(\R)^\circ \times \frac{{}^0 F^\times_{\infty +}}{U_F^1(\m)} .
$$ 
Next, under the identification $F^\times_{\infty +} \simeq (\R^\times_+)^{r_1} \times (\C^\times)^{r_2}$, after writing $\C^\times = \R^\times_+ \times S^1$, and noting that $K(\R)^\circ = (S^1)^{r_2}$ which is the product of the $S^1$'s coming from the complex places, we have: 
$$
{}^0 F^\times_{\infty +} \ \simeq \ K(\R)^\circ \times {}^0(\R^\times_+)^{r_1+r_2},  
$$
where, ${}^0(\R^\times_+)^{r_1+r_2}$ consists of tuples $(t_v)_{v \in S_\infty}$ such that $\sum_v \log|t_v|_v = 0.$ 
Assume furthermore that $\m$ is deep enough so that $U_F^1(\m) \cap K(\R)^\circ$, which {\it a priori} is finite being discrete and compact in $F^\times_{\infty +}$, 
is in fact trivial, i.e., it amounts to assuming that $U_F^1(\m)$ has no roots of unity. Then, by the proof of Dirichlet's unit theorem using the Minkowski map 
(see the notations in the proof of Lem.\,\ref{lem:purity}) one has: 
$$
\frac{{}^0(\R^\times_+)^{r_1+r_2}}{U_F^1(\m)}
\approx
\frac{\cH \approx \R^{r_1+r_2-1}}{\Gamma_F^1(\m)} 
\approx 
(S^1)^{r_1+r_2-1}. 
$$
To summarise, any connected component of $S^G_\m$, for $\m$ deep enough, is homeomorphic to 
$$
A(\R)^\circ \times K(\R)^\circ \times (S^1)^{r_1+r_2-1},
$$
which in turn is isomorphic to $\R^\times_+ \times (S^1)^{r_1+2r_2-1}.$ 
Note that 
$$\dim(S^G_\m) = r_1+2r_2 = d_F = [F:\Q].$$
Similarly, the other spaces in \eqref{eqn:loc-symm-space} can be described. We can grant ourselves the license to call 
any of these as a `locally symmetric space' for $\GL_1$ over $F$ with level $\m.$

\bigskip
\subsection{Sheaves on locally symmetric spaces}

Take a field $E,$ which is assumed to be a finite Galois extension of $\Q$ that takes a copy of $F$; hence 
$\Hom(F, E) = \Hom(F, \bar{E}).$ 
This field $E$ may be called the field of coefficients. Consider an infinity type ${\bf n} = (n_\tau)_{\tau : F \to E}$ with $n_\tau \in \Z;$ one may also denote this as  
${\bf n} \in \Z[\Hom(F,E)]$.  
This is like an infinity type from before, except that it is now parametrised over $\Hom(F,E)$ instead of $\Hom(F, \bar{\Q})$ as before. 
An embedding $\iota : E \to \bar\Q$ gives an identification $\iota_* : \Hom(F,E) \to \Hom(F,\bar\Q)$ as 
$\tau \mapsto \iota_*\tau = \iota \circ \tau.$ Such an ${\bf n}$ gives us an algebraic representation $\vartheta_{\bf n}$ on a one-dimensional 
$E$-vector space $\cM_{{\bf n}, E}$ denoted: 
$$
\vartheta_{\bf n} : G \times E \ \longrightarrow \ \GL_1(\cM_{{\bf n},E}). 
$$
Since $G \times E = \Res_{F/\Q}(\GL_1/F)\times E = \prod_{\tau : F \to E} \GL_1 \times_{F, \tau} E,$ and on the component indexed by $\tau$, the 
representation is $x \mapsto \vartheta_{n^\tau}(x) := x^{n_\tau}$ for $x \in \GL_1(E).$ 
For $a \in G(\Q) = F^\times$, we have  
$$
\vartheta_{\bf n}(a) \ = \ \prod_{\tau : F \to E} \vartheta_{n_\tau}(\tau(a)) \ = \ \prod_{\tau : F \to E} \tau(a)^{n_\tau}. 
$$

\medskip

Given an infinity type ${\bf n} \in \Z[\Hom(F,E)]$, and the associated representation $(\vartheta_{\bf n}, \cM_{{\bf n},E})$, there is a sheaf $\tM_{{\bf n},E}$ of $E$-vector spaces on $S^G_\m$ constructed as follows. Consider the canonical projection map of dividing by $F^\times$: 
$$
\pi \ : \ \I_F/\U_{F,f}(\m) \ \longrightarrow \ F^\times\backslash \I_F/ \U_{F,f}(\m) = S^G_\m. 
$$
For any open subset $V \subset S^G_\m$, define the sections of $\tM_{{\bf n},E}$ over $V$ as the $E$-vector space:
\begin{equation}
\label{eqn:sheaf}
\tM_{{\bf n},E}(V) \ = \ \{s : \pi^{-1}(V)  \to \cM_{{\bf n},E} \ : \ s(a x) = \vartheta_{\bf n}(a) s(x), \ \ \forall a \in F^\times, \ \forall x \in \pi^{-1}(V)\}. 
\end{equation}
Checking the sheaf axioms is a routine exercise. The purity lemma (Lem.\,\ref{lem:purity}) can be recast as the following lemma: 

\begin{lemma}[Purity]
\label{lem:purity-sheaf}
Given ${\bf n} \in \Z[\Hom(F,E)]$, the sheaf $\tM_{{\bf n},E}$ of $E$-vector spaces on $S^G_\m$ is non-zero if and only if: 
there exists ${\sf w} \in \Z$ such that 
\begin{enumerate}
\item[(i)] if $S_r \neq \emptyset$ then $n_\tau = {\sf w}$ for all $\tau \in \Hom(F,E),$ and 
\item[(ii)] if $S_r = \emptyset$ then $n_{\iota \circ \tau} + n_{\overline{\iota \circ \tau}} = {\sf w}$ for all $\tau \in \Hom(F,E)$ and for all 
$\iota \in \Hom(E,\bar\Q).$
\end{enumerate}
\end{lemma}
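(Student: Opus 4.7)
The plan is to identify non-vanishing of $\tM_{{\bf n},E}$ with the triviality of the one-dimensional representation $\vartheta_{\bf n}$ on a specific unit subgroup, after which the already-proven Lem.~\ref{lem:purity} does all the remaining work.

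First I would compute a stalk. Pick $\bar x \in S^G_\m$ and a lift $x_0 \in \I_F/\U_{F,f}(\m)$, and choose a small enough open $V \ni \bar x$ so that $\pi^{-1}(V) = \bigsqcup_{a} a \cdot \tilde V,$ with $a$ running over cosets of $\mathrm{Stab}_{F^\times}(x_0)$ in $F^\times$ and $\tilde V$ a small neighborhood of $x_0$. By \eqref{eqn:sheaf}, a section over $V$ is the same datum as a map $\tilde V \to \cM_{{\bf n},E}$ subject to $s(ux) = \vartheta_{\bf n}(u) s(x)$ for $u \in \mathrm{Stab}_{F^\times}(x_0)$. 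Since $F^\times$ is abelian this stabilizer is independent of the chosen lift and equals $F^\times \cap \U_{F,f}(\m)$; unwrapping the definition of $\U_{F,f}(\m)$ identifies it as the group $U_F^1(\m)$ of global units congruent to $1$ modulo $\m$, diagonally embedded in the finite id\`eles. Since $\cM_{{\bf n},E}$ is one-dimensional over $E$, the stalk is either the full $\cM_{{\bf n},E}$ when $\vartheta_{\bf n}|_{U_F^1(\m)}$ is trivial, or it is zero otherwise. Hence $\tM_{{\bf n},E}$ is non-zero if and only if
$$
\prod_{\tau \in \Hom(F,E)} \tau(u)^{n_\tau} \ = \ 1 \ \ \text{in } E^\times, \quad \forall u \in U_F^1(\m).
$$

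The final step is to translate this identity into the setting of Lem.~\ref{lem:purity}. For any $\iota \in \Hom(E,\bar\Q)$ the map $\tau \mapsto \iota \circ \tau$ is a bijection $\Hom(F,E) \to \Sigma_F = \Hom(F,\bar\Q)$, since $E$ is Galois over $\Q$ and contains a copy of $F$. Applying $\iota$ to the displayed identity turns it into an identity of exactly the form hypothesized in Lem.~\ref{lem:purity} for the infinity type $\sigma \mapsto n_{\iota_*^{-1}\sigma}$, yielding a common weight ${\sf w}$ satisfying (i) or (ii) of that lemma. Letting $\iota$ range over $\Hom(E,\bar\Q)$ sweeps out all Galois-conjugate re-indexings and reproduces the quantifier on $\iota$ appearing in the present statement. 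The converse direction is immediate by reversing the argument: conditions (i) or (ii) make $\vartheta_{\bf n}$ trivial on $U_F^1(\m)$ by Lem.~\ref{lem:purity}, the equivariance obstruction vanishes, and a non-zero global section is obtained by choosing any non-zero value at a base point and extending $F^\times$-equivariantly. The only delicate point, and the only one that requires real care, is bookkeeping the two parametrizations (over $\Hom(F,E)$ versus $\Sigma_F$) through the change of base $\iota$; beyond that, the result is a clean corollary of Lem.~\ref{lem:purity}.
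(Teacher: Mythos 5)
Your proof is correct and follows essentially the same route as the paper's: reduce non-vanishing of the sheaf to non-vanishing of a stalk, identify the obstruction as the triviality of $\vartheta_{\bf n}$ on $F^\times \cap \U_{F,f}(\m) = U_F^1(\m)$, and then transport the resulting identity to $\bar\Q$ via an embedding $\iota$ so that Lem.\,\ref{lem:purity} applies. Your extra care with the re-indexing $\Hom(F,E) \to \Sigma_F$ under $\iota$ is exactly the point the paper compresses into ``after passing to $\bar\Q$ via an $\iota$,'' so nothing is missing.
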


\begin{proof}
A sheaf is nonzero if and only if it admits a nonzero stalk. A stalk over a point $y \in V$, consists of all $s : \pi^{-1}(y)  \to \cM_{{\bf n},E}$, such that  
$s(a x) = \vartheta_{\bf n}(a) s(x),$ for all $a \in F^\times$ and all $x \in \pi^{-1}(y).$ This is possible if and only if $\vartheta_{\bf n}(a) = 1$ for 
all $a \in F^\times \cap \U_{F,f}(\m) = U_F^1(\m)$. After passing to $\bar\Q$ via an $\iota$, the rest is basically the same as proof of Lem.\,\ref{lem:purity}. 
\end{proof}

\medskip

Analogously, one may construct sheaves on the other spaces in \eqref{eqn:loc-symm-space}; one may pull-back or push-forward the so constructed sheaves, and explicate the mutual relations between them.

\bigskip
\subsection{Cohomology of $\GL_1$ and algebraic Hecke characters of $F$ with coefficients in $E$}
Given an integral ideal $\m$, and an infinity type ${\bf n} \in \Z[\Hom(F,E)]$ satisfying the purity conditions of  Lem.\,\ref{lem:purity-sheaf}, 
a fundamental object of interest is 
$$
H^\bullet(S^G_\m, \, \tM_{{\bf n},E}), 
$$
the sheaf-cohomology of a locally symmetric space for $\GL_1/F$   
with coefficients in the sheaf $\tM_{{\bf n},E}.$ One may also consider 
$H^\bullet_c(S^G_\m, \, \tM_{{\bf n},E}),$
which is the cohomology with compact supports. 
It will take us way too much of an effort without substantial dividends for the purposes of this article to develop this systematically. 
Any reader who might feel this is brewing heavy weather is recommended to look into Harder \cite[\S\,2.5]{harder} on the cohomology of tori, where the torus at hand is the diagonal torus in $\GL_2$, which, up to the easily controlled centre, is basically the above kind of cohomology group. This context naturally arises in the theory of Eisenstein cohomology where the cohomology of the Levi subgroups (in particular, tori) play a crucial role. The reader is referred to Harder's 
forthcoming book \cite{harder-book} where this point of view is treated in great detail. 

\medskip
Let us suffice here to note that an algebraic Hecke character $\chi$ of $F$ modulo $\m$, 
taking values in the algebraic closure $\bar\Q$ of $\Q$ inside $\C$, and with infinity type ${\bf n}$, 
contributes to $H^0(S^G_\m, \, \tM_{{\bf n}, \bar\Q})$, and by duality also to the top-degree cohomology with compact supports: 
$H^{d_F}_c(S^G_\m, \, \tM_{{\bf n}, \bar\Q}).$ For arithmetic applications, it is better to work with an arbitrary coefficient field $E$, from which 
one can then map into $\bar\Q$ or $\C$; this necessitates clarifying the meaning of an algebraic Hecke character of $F$ with 
coefficients in $E$. 

\medskip

The definition below follows Deligne \cite[\S 5]{deligne-sga}; see also Schappacher \cite[Chap.\,0]{schappacher}. 
Whereas in \cite{deligne-sga} and  \cite{schappacher} $E$ is any number field with no relation to $F$, we will continue with our 
restriction that $E$ is a finite Galois extension of $\Q$ that contains a copy of $F.$ Given an integral ideal $\m$ of $F$, 
and an infinity type ${\bf n} \in \Z[\Hom(F,E)],$
define an {\it algebraic Hecke character of $F$ with values in $E$ of modulus $\m$ and infinity type ${\bf n}$} as a homomorphism
$$
\chi: \ \J_F(\m) \ \to \ E^\times
$$ 
such that for any principal ideal $(a) \in \P^+_F(\m)$ (recall: $a \gg 0$ and $a \equiv 1 \pmod{\m}$) one has 
$$
\chi((a)) \ = \ \prod_{\tau \in \Hom(F,E)} \tau(a)^{n_\tau}.
$$
Note the similarity to Hecke's definition of a {\it Gr\"o\ss encharakter} mod $\m$, but with values in $E^\times$ instead of $\C^\times$, and 
while asking for the character at infinity to be of algebraic type. 
To see the relation with our previous treatment of algebraic Hecke characters, following \cite[\S0.5]{schappacher}, `localise' over any place 
of $E$. We will work with archimedean places, but note that as in {\it loc.\,cit.\,}we may also go through the discussion below for any finite place which would give
us $\ell$-adic versions of algebraic Hecke characters. 

\medskip

Let $\chi$ be an algebraic Hecke character of $F$ with values in $E$ of modulus $\m = \prod_{\p|\m} \p^{m_\p}$ and infinity type ${\bf n}.$ Then there is unique continuous homomorphism
\begin{equation}
\label{eqn:chi-A}
\chi_\A : \I_F \ \to \ E^\times,
\end{equation}
with the discrete topology on $E^\times$, such that 
\begin{enumerate}
\smallskip
\item $\chi_\A^{-1}(1) = \prod_{v \in S_\infty}F_v^\times \prod_{\p | \m} (1+\p^{m_\p}\O_p) \prod_{\p \not\,| \, \m} \O_\p^\times,$ which is open in $\I_F.$  
\smallskip
\item for $\p \! \not| \, \m$, the local component $\chi_\p$ of $\chi_\A$ satisfies: $\chi_\p(\varpi_\p) = \chi(\p).$ 
\smallskip
\item $\chi_\A|_{F^\times} = \vartheta_{\bf n};$ (recall: $\vartheta_{\bf n}(a) = \prod_{\tau : F \to E} \tau(a)^{n_\tau}$).  
\end{enumerate}
By weak approximation, $\I_F = F^\times \cdot (\prod_{v \in S_\infty}F_v^\times \prod_{\p | \m} (1+\p^{m_\p}\O_p) \prod_{\p \not\,| \, \m} F_\p^\times)$, 
giving us the uniqueness of $\chi_\A.$ Let us note {\it en passant} that such a $\chi_\A$ is an element of $H^0(S^G_\m, \tM_{{\bf n},E}).$ 
Fix an embedding $\iota : E \hookrightarrow \C$, and suppose it corresponds to the archimedean 
place $v_\iota$ of $E$. The character $\vartheta_{\bf n}$ is an element of $\Hom_{\Q-{\rm alg}}(\Res_{F/\Q}(\GL_1/F), \Res_{E/\Q}(\GL_1/E))$, and taking 
the $\A$-points, one gets a continuous homomorphism $\vartheta_{{\bf n}, \A} : \I_F \to \I_E.$ 
Composing with the projection $\I_E^\times \to E_{v_\iota}^\times \simeq \C^\times$ gives:  
\begin{equation}
\label{eqn:theta-iota}
\vartheta_{{\bf n}, \iota} : \I_F \to \C^\times.
\end{equation}
Now, define a homomorphism $\I_F \to \C^\times$ as: 
\begin{equation}
\label{eqn:iota-chi}
{}^\iota\chi \ := \ (\iota \circ \chi_\A) \cdot \vartheta_{{\bf n},\iota}^{-1},
\end{equation}
Note that $\iota$ induces a bijection 
$\iota_* : \Hom(F,E) \to \Hom(F,\C)$ as $\iota_*(\tau) = \iota \circ \tau.$ The standing hypothesis on $E$ being Galois and containing a copy of $F$
is used here.  In particular, there is a mapping of infinity types: 
$\Z[\Hom(F,E)] \to \Z[\Hom(F,\C)]$ as ${\bf n} \mapsto {}^\iota{\bf n};$
for $\eta \in \Hom(E,\C)$, the $\eta$-component of ${}^\iota{\bf n}$ is given by:
${}^\iota{\bf n}_\eta = n_{\iota_*^{-1}\eta}.$
Hence, $\iota \circ \vartheta_{\bf n} = \vartheta_{{}^\iota{\bf n}}$, as homomorphisms from $F^\times$ to $\C^\times$ 
since 
$$
(\iota \circ \vartheta_{\bf n})(a)
 \ = \ \iota (\prod_{\tau \in \Hom(F,E)} \tau(a)^{n_\tau}) 
\ = \ \prod_{\tau \in \Hom(F,E)} (\iota_*\tau(a))^{n_\tau} 
\ = \ \prod_{\eta \in \Hom(F,\C)} (\eta(a))^{n_{\iota_*^{-1}\eta}}.
$$
The properties of ${}^\iota\chi$ are summarised in the following proposition the proof of which is now a routine check. 

\begin{prop}
\label{prop:chi-E-to-C}
Let $\chi$ be an algebraic Hecke character of $F$ with values in $E$ of modulus $\m$ and infinity type ${\bf n}.$ For any embedding 
$\iota : E \to \C$, the homomorphism ${}^\iota\chi$ satisfies the following: 
\begin{enumerate}
\item ${}^\iota\chi : \I_F/F^\times \to \C^\times$ is a continuous homomorphism; 
\item For all $\p \notin S_\infty$, ${}^\iota\chi_\p = \iota \circ \chi_\p$; recall that $\chi_\p$ is the local character of $\chi_\A;$ 
\item ${}^\iota\chi_\infty = \vartheta_{{\bf n},\iota}^{-1}|_{F_\infty^\times}$ is completely determined by its values on 
$F^\times$ embedded diagonally in $F_\infty^\times,$ on which we have
$\vartheta_{{\bf n},\iota}^{-1}|_{F^\times} = \iota \circ \vartheta_{\bf n}^{-1} = \vartheta_{-{}^\iota{\bf n}};$ 
\end{enumerate}
i.e, ${}^\iota\chi$ is a Hecke character of $F$ of modulus $\m$ and infinity type $-{}^\iota{\bf n}.$ 

\smallskip
Furthermore, for $k \in \Z$, the Tate-twist ${}^\iota\chi(k) := {}^\iota\chi \otimes |\!|\ |\!|^k$, has infinity type $-{}^\iota{\bf n}+k.$
\end{prop}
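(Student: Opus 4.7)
The plan is to verify the listed properties by unwinding the two factors in the definition $^{\iota}\chi = (\iota\circ\chi_{\A})\cdot \vartheta_{{\bf n},\iota}^{-1}$. The key geometric input is that $\vartheta_{{\bf n},\iota}$, being built from an algebraic morphism of $\Q$-group schemes composed with an archimedean projection on the target, is trivial on each finite-id\`ele component while on the diagonal $F^\times$ it coincides with $\iota\circ\vartheta_{\bf n}$.

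First I would check (2). For a finite prime $\p$ of $F$ of residue characteristic $p$, the adelic morphism $\vartheta_{{\bf n},\A}:\I_F\to\I_E$ sends $F_\p^\times$ into the product of $E_\mathfrak{q}^\times$ over the places $\mathfrak{q}$ of $E$ above $p$, simply because the underlying morphism of $\Q$-schemes respects localization at residue characteristics. Projecting to the archimedean place $v_\iota$ of $E$ therefore kills $F_\p^\times$, so $\vartheta_{{\bf n},\iota}\vert_{F_\p^\times}=1$ and hence $^{\iota}\chi_\p = \iota\circ\chi_\p$. Continuity of $^{\iota}\chi$ is then automatic: $\chi_{\A}$ is continuous into $E^\times$ with the discrete topology, so $\iota\circ\chi_{\A}$ is continuous into $\C^\times$, and $\vartheta_{{\bf n},\iota}$ is continuous by construction.

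Next I would verify $F^\times$-invariance and (3) together. Condition (3) for $\chi_{\A}$ says $\chi_{\A}\vert_{F^\times}=\vartheta_{\bf n}$, while condition (1) says $\chi_{\A}\vert_{F_\infty^\times}=1$. The first, combined with the identity $\iota\circ\vartheta_{\bf n}=\vartheta_{{}^\iota{\bf n}}$ on $F^\times$ displayed just before the statement, gives $^{\iota}\chi\vert_{F^\times}=1$. The second gives $^{\iota}\chi_\infty=\vartheta_{{\bf n},\iota}^{-1}\vert_{F_\infty^\times}$. The assertion that this infinite component is determined by its restriction to the diagonal $F^\times$ follows from density of $F^\times$ in $F_\infty^\times$ (weak approximation) together with the continuity of $\vartheta_{{\bf n},\iota}$; the explicit value $\vartheta_{-{}^\iota{\bf n}}$ on $F^\times$ is the inverse of $\iota\circ\vartheta_{\bf n}=\vartheta_{{}^\iota{\bf n}}$. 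Comparing with (\ref{eqn:algebraic-H-char}) reads off the infinity type as $-{}^\iota{\bf n}$.

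Finally, the Tate-twist statement follows because the norm character $|\!|\ |\!|^k$, restricted to $F_\infty^\times$, shifts the exponent of each archimedean coordinate uniformly by $k$; combined with $^{\iota}\chi_\infty$ this yields the infinity type $-{}^\iota{\bf n}+k$ for $^{\iota}\chi(k)$. The main obstacle, such as it is, is really just the local/global compatibility invoked in the very first step: once one accepts that the $\A$-points of an algebraic morphism of $\Q$-groups respect the decomposition by residue characteristic, the remaining verifications are bookkeeping.
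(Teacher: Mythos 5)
Your proof is correct and is exactly the ``routine check'' that the paper leaves to the reader after defining $\chi_\A$, $\vartheta_{{\bf n},\iota}$, and ${}^\iota\chi$: the one substantive input, that $\vartheta_{{\bf n},\A}$ respects the decomposition of $\I_F$ by residue characteristic so that $\vartheta_{{\bf n},\iota}$ kills each $F_\p^\times$, is correctly identified and justified, and the rest (cancellation on the diagonal $F^\times$, density of $F^\times$ in $F_\infty^\times$, and the uniform shift by $k$ for the Tate twist) is the intended bookkeeping.
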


We may say that ${}^\iota\chi$ is an automorphic representation of $\GL_1/F$ of cohomological type. We are interested in the special values of the 
Hecke $L$-function attached to ${}^\iota\chi$.

\bigskip
\section{\bf Ratios of critical values of Hecke $L$-functions}
\label{sec:harder-hecke}

Let $\chi$ be an algebraic Hecke character of $F$ with values in $E$ of modulus $\m$ and infinity type ${\bf n} = \sum_{\tau : F \to E} n_\tau \tau.$  
For any embedding $\iota : E \to \C$, consider the algebraic Hecke character ${}^\iota\chi$ as in Prop.\,\ref{prop:chi-E-to-C}. The rest 
of this article concerns the special values of the $\C$-valued $L$-function:
$$
L(s,\iota, \chi) \ := \ L(s,  {}^\iota\chi),
$$
the right hand side is the usual Hecke $L$-function; see \ref{sec:defn-l-fn}. 
It is sometimes convenient to consider the $E \otimes \C$-valued $L$-function: 
$$
\bL(s, \chi) \ := \ \{L(s,\iota, \chi)\}_{\iota: E \to \C}, 
$$
where $E \otimes \C$ is identified with $\prod_{\iota: E \to \C} \C.$ From Prop.\,\ref{prop:critical} and Lem.\,\ref{lem:ell-ind-iota} below 
it follows that the 
critical set of $L(s,  {}^\iota\chi)$ is independent of $\iota$; one may therefore allude to the critical set for $\bL(s, \chi).$ 
Furthermore, we are interested in the rationality results for the ratios of successive 
critical $L$-values that arise naturally in the theory of Eisenstein cohomology as in Harder \cite{harder}. 
Henceforth, we assume that $F$ is a totally imaginary number field, in which case it follows from Prop.\,\ref{prop:critical} 
that if the width $\ell$ of ${}^\iota\chi$ satisfies $\ell \geq 2$, then we are guaranteed the existence of two consecutive integers 
that are critical for $\bL(s, \chi).$

\bigskip
\subsection{A result of Harder on ratios of critical $L$-values}

The relative discriminant $\delta_{F/K}$ of any extension $F/K$ of number fields is defined as: suppose $\Hom(F, \bar{K}) = \{\sigma_1,\dots, \sigma_r\}$, and 
$\{\omega_1,\dots,\omega_r\}$ is a $K$-basis of $F$, then define $\delta_{F/K} \ := \ \det([\sigma_i(\omega_j)])^2$; 
taking the square makes it invariant under $\Gal(\bar{K}/K)$, and hence $\delta_{F/K}$ is in $K^\times$, and as an element of $K^\times/K^\times{}^2$ it is 
independent of the choice of the basis $\{\omega_j\}$. The absolute discriminant of $F$ is $\delta_{F/\Q}.$ 
Harder proves in \cite[Cor.\,(4.2.2)]{harder} that if $m$ and $m+1$ are critical for $\bL(s, \chi),$ then the number
\begin{equation}
\label{eqn:harder-1}
C({}^\iota\chi) \ := \ |\delta_{F/\Q}|^{1/2} \frac{L(m, {}^\iota\chi)}{L(m+1, {}^\iota\chi)}
\end{equation}
is in $\bar\Q$, and for any $\varsigma \in \Gal(\bar\Q/\Q)$, one has 
\begin{equation}
\label{eqn:harder-2}
\varsigma(C({}^\iota\chi)) \ = \ C({}^{\varsigma\circ\iota}\chi). 
\end{equation}
Let us note that we work with the completed $L$-function, and not with just the finite part of the $L$-function as in {\it loc.\,cit.} In particular, the power of 
$\bpi$ in \cite[Cor.\,(4.2.2)]{harder} is subsumed by the ratio of archimedean local-factors. Also, the result is stated therein for $m=-1$; for a general critical integer $m$ (with $m+1$ also critical) follows by taking Tate-twists of $\chi.$

\medskip

Note that \eqref{eqn:harder-1} is asserting that the ratio of $L$-values therein is algebraic, since $|\delta_{F/\Q}|^{1/2}$ is algebraic; but if we only look 
at the ratios of $L$-values then the reciprocity law becomes obscured. At any rate, the algebraicity result as stated in \eqref{eqn:harder-1}
together with the reciprocity law in \eqref{eqn:harder-2} gives that: 
\begin{equation}
\label{eqn:harder-3}
|\delta_{F/\Q}|^{1/2} \frac{L(m, {}^\iota\chi)}{L(m+1, {}^\iota\chi)} \ \in \iota(E).
\end{equation}
We contend that whereas just the algebraicity result in \eqref{eqn:harder-1} is correct, but restated as the stronger statement in \eqref{eqn:harder-3} via the reciprocity law it is not correct as it stands by producing an example when $F$ is a totally imaginary field that is not of CM type. 
The reason turns out to be, and this is the {\it raison d'\^etre} for 
writing this article, that the reciprocity law in \eqref{eqn:harder-2} needs to be modified by a sign; this sign is rather complicated, and turns out to be trivial when
$F$ is a CM field, and can be nontrivial for a general totally imaginary field. 
We will first discuss this example in Sect.\,\ref{sec:counterexample} that says that \eqref{eqn:harder-3} is not stable under base-change making it incorrect as it stands in general. In Sect.\,\ref{sec:correct-theorem}, we will see how to rectify the reciprocity law in \eqref{eqn:harder-2} using, interestingly enough, some other passages in \cite{harder} itself.

\bigskip
\subsection{An example: stability under base-change} 
\label{sec:counterexample}

Consider the fields in Example\,\ref{exam:hida}: $F_0 = \Q$, and put $F_1 = \Q(\i)$ and $F = \Q(\i, \sqrt{4+\i}).$ To emphasize, this $F$ is totally imaginary but not CM, and $F_1$ is a CM subfield. Changing notations mildly to make it more suggestive we denote:  
$$
\xymatrix{
F = \Q(\i, \sqrt{4+\i}) \ar@{-}[d] \\
F^\cm = \Q(\i) \ar@{-}[d] \\
F^+ = \Q
}
$$ 
Let us compute $\delta_{F/\Q}.$ First of all, 
$$
\delta_{F/F^\cm} \ = \ \delta_{\Q(\i, \sqrt{4+\i})/\Q(\i)} \ = \ 4(4+\i), 
$$ 
computed with respect to the basis $\{1, \sqrt{4+\i}\}$; hence: 
$$
N_{F^\cm/\Q}(\delta_{F/F^\cm}) \ = \ N_{\Q(\i)/\Q}(4(4+\i)) \ = \ 16\cdot 17.
$$
Also, using the basis $\{1, \i\}$ for $\Q(\i)$, we have:
$$
\delta_{F^\cm/\Q} = \delta_{\Q(\i)/\Q} = -4.
$$
Using the well-known formula for the discriminant of a tower of fields, we get:
$$
\delta_{F/\Q} \ = \ \delta_{F^\cm/\Q}^{[F:F^\cm]} \cdot N_{F^\cm/\Q}(\delta_{F/F^\cm}) \ = \ (-4)^2 \cdot 16 \cdot 17 \ =  \
2^8 \cdot 17.
$$

\bigskip

Let $\psi$ be an algebraic Hecke character of $F^\cm = \Q(\i)$; as a continuous homomorphism $\psi : \A_{\Q(\i)}^\times / \Q(\i)^\times \to \C^\times$, 
with the character at infinity, $\psi_\infty : \C^\times \to \C^\times,$ being of the form $\psi_\infty(z) = z^a \bar{z}^b,$ for $a , b \in \Z$. We may take (without loss of much generality) that $a \geq b$. The critical set for $L(s, \psi)$ is given by: 
$$
\Crit(L(s, \psi)) \ = \ \{1-a, \, 2-a, \, \dots, \, -b\}.
$$
Let us further assume that $a -b \geq 2$ so that there are at least two critical points; let $m, m+1 \in \Crit(L(s, \psi)).$ Then, 
\eqref{eqn:harder-3} will say that 
$$
|\delta_{F^\cm/\Q}|^{1/2} \ \frac{L(m,\psi)}{L(m+1, \psi)} \ \in \Q(\psi), 
$$
where $\Q(\psi)$ is the number field generated by the values of the finite part $\psi_f$ of $\psi.$ (We could have set up the context so that $\psi$ takes values in a coefficient field $E$, and then used an 
$\iota : E \to \C$ and to assert that the left hand side, for $L(s, \iota, \psi)$ in place of $L(s, \psi),$ lies in $\iota(E).$) Since 
$|\delta_{F^\cm/\Q}|^{1/2} = |-4|^{1/2} \in \Q^\times$ we have: 
\begin{equation}
\label{eqn:psi}
\frac{L(m,\psi)}{L(m+1, \psi)} \ \in \ \Q(\psi).
\end{equation}
Suppose $\omega$ is a quadratic Dirichlet character of $F^\cm$, then $\omega_\infty$ is trivial, and \eqref{eqn:psi} applied to $\psi\omega$ will give: 
\begin{equation}
\label{eqn:psi-omega}
\frac{L(m,\psi \omega)}{L(m+1, \psi \omega)} \ \in \ \Q(\psi\omega) \ = \ \Q(\psi).
\end{equation}

\bigskip

Now let $\chi$ be the base-change of $\psi$ to give a Hecke character of $F$:
$$
\chi \ := \ \psi \circ N_{F/F^\cm}. 
$$
Note that $F_\infty = \C \times \C$, $F^\cm_\infty = \C$, and $N_{F_\infty/F^\cm_\infty}(z_1,z_2) = z_1z_2$. Hence: 
$\chi_\infty(z_1, z_2) = (z_1z_2)^a \overline{(z_1z_2)}^b = z_1^a z_2^a \bar{z_1}^b \bar{z_2}^b,$ which is to say 
$\chi_\infty = \psi_\infty \otimes \psi_\infty$, whence $L_\infty(s, \chi) = L_\infty(s, \psi)^2$, from which we deduce:
$$
\Crit(L(s, \chi)) \ = \ \Crit(L(s, \psi)). 
$$
Also, a basic property of base-change is that:
\begin{equation}
\label{eqn:prod}
L(s, \chi) \ = \ L(s, \psi) L(s, \psi \omega),
\end{equation}
where $\omega = \omega_{F/F^\cm}$ is the quadratic character of $F^\cm$ attached by class field theory to the quadratic extension $F/F^\cm.$ 
From \eqref{eqn:prod}, \eqref{eqn:psi}, and \eqref{eqn:psi-omega} we deduce: 
\begin{equation}
\label{eqn:chi-1}
\frac{L(m,\chi)}{L(m+1, \chi)} \ \in \ \Q(\psi) = \Q(\chi).
\end{equation}
But, on the other hand, one may directly apply \eqref{eqn:harder-3} to $L(s, \chi)$ to get: 
$$
|\delta_{F/\Q}|^{1/2} \ \frac{L(m,\chi)}{L(m+1, \chi)} \ \in \Q(\chi).
$$
For the discriminant term: $|\delta_{F/\Q}|^{1/2} = (2^8 \cdot 17)^{1/2} = \sqrt{17} \pmod{\Q^\times}.$ Hence: 
\begin{equation}
\label{eqn:chi-2}
\sqrt{17} \, \frac{L(m,\chi)}{L(m+1, \chi)} \ \in \ \Q(\chi).
\end{equation}
Of course, \eqref{eqn:chi-1} and \eqref{eqn:chi-2} are contradictory, because in general $\sqrt{17} \notin \Q(\chi).$ We may say that 
that the formal statement in \eqref{eqn:harder-3} is not stable under base-change. The reader can readily appreciate that the above example can be generalized.

\bigskip
\subsection{Variations on a result of Harder} 
\label{sec:correct-theorem}

In this subsection we state the main theorem of this article; see Theorem \ref{thm:main}. Before doing so, we need to introduce notions and notations that lead to a delicate signature that appears in that theorem. For the rest of this subsection we will let $F$ be a totally imaginary number field, and $E$ a number field that is Galois over $\Q$ and contains a copy of $F$. Let 
${\bf n} = \sum_\tau n_\tau \tau  \in \Z[\Hom(F, E)]$ be an infinity type satisfying the purity condition of Lem.\,\ref{lem:purity-sheaf}, 
with purity weight ${\sf w}.$
Suppose 
$\chi$ is an algebraic Hecke character of $F$ with values in $E$ of infinity type ${\bf n}.$ Take any embedding $\iota : E \to \C,$ and 
let ${}^\iota\chi$ be the character as in Prop.\,\ref{prop:chi-E-to-C}.  

\medskip
Recall from Prop.\,\ref{prop:critical} that the width of ${\bf n},$ for a given $\iota,$ is defined as:  
$$
\ell({}^\iota\chi) \ = \ \ell({}^\iota{\bf n}) \ := \ \min_{\tau : F \to E} \{|n_{\iota\circ\tau} - n_{\overline{\iota\circ\tau}}|\}.
$$ 
The Hecke $L$-function $L(s, {}^\iota\chi)$ has $\ell({}^\iota\chi)$ many critical points. Note the following simple but important lemma: 

\begin{lemma}
\label{lem:ell-ind-iota}
$\ell({}^\iota{\bf n})$ is independent of $\iota$, and depends
only on the pure infinity type ${\bf n}.$ 
\end{lemma}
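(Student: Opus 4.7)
The plan is to unpack the definition of the width and apply the purity condition, using the Galois hypothesis on $E$ to re-index everything on $\Hom(F,E)$. First I would note that because $E/\Q$ is Galois and contains a copy of $F$, the image $\iota(E) \subset \bar\Q$ is stable under complex conjugation $c$ for every $\iota : E \to \C$. Consequently, for each $\tau \in \Hom(F,E)$, the embedding $\overline{\iota\circ\tau} : F \to \C$ lands in $\iota(E)$, so there is a unique element of $\Hom(F,E)$, which I will call $c_\iota(\tau) := \iota^{-1}\circ c \circ \iota\circ\tau$, characterised by $\iota\circ c_\iota(\tau) = \overline{\iota\circ\tau}$. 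The assignment $\tau \mapsto c_\iota(\tau)$ is an involution on $\Hom(F,E)$ which depends on $\iota$. Using this, the defining formula rewrites as
$$
\ell({}^\iota{\bf n}) \ = \ \min_{\tau : F \to E} \bigl| n_\tau - n_{c_\iota(\tau)}\bigr|,
$$
where now the indices all lie in $\Hom(F,E)$ as required for ${\bf n}$.

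Next I would invoke purity. The condition in Lem.\,\ref{lem:purity-sheaf}(ii) reads precisely $n_{\iota\circ\tau} + n_{\overline{\iota\circ\tau}} = {\sf w}$ for all $\tau$ and all $\iota$, which under the identification above says $n_\tau + n_{c_\iota(\tau)} = {\sf w}$ for every $\iota$ and every $\tau$. Substituting $n_{c_\iota(\tau)} = {\sf w} - n_\tau$ into the previous display gives
$$
\ell({}^\iota{\bf n}) \ = \ \min_{\tau : F \to E} \bigl| 2 n_\tau - {\sf w}\bigr|,
$$
an expression in which $\iota$ no longer appears; since ${\sf w}$ is itself determined by ${\bf n}$ (it can be read off from any single value of $\iota$), this proves the lemma. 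The only step that is not a bookkeeping exercise is the observation that $c_\iota$ is a well-defined involution on $\Hom(F,E)$, and I do not anticipate any real obstacle — the argument is essentially a tautology once one sees that purity is exactly the $\iota$-uniform symmetry that collapses the dependence on $\iota$.
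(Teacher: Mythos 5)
Your proof is correct, but it takes a genuinely different and more direct route than the paper. The paper argues structurally: it splits into the case where $F$ has no CM subfield (where $\ell=0$ by Prop.\,\ref{prop:exist_alg_hecke_tot_imag}) and the case where $F$ contains a maximal CM subfield $F_1$, then invokes Prop.\,\ref{prop:descend-to-F_1} to write ${\bf n}$ as the base-change of an infinity type ${\bf m}$ on $F_1$, and finally uses Lem.\,\ref{lem:CM-field-embeddings} to see that over a CM field the pairing of embeddings into conjugates is independent of $\iota$, yielding $\ell = \min_j |m_{\tau_j}-m_{\tau_j'}|$. You instead exploit the fact that the purity condition of Lem.\,\ref{lem:purity-sheaf}(ii) is quantified over \emph{all} $\iota$: writing $c_\iota(\tau) = \iota^{-1}\circ \c \circ \iota\circ\tau$ (well defined since $\iota(E)$ is Galois over $\Q$, hence stable under complex conjugation), purity gives $n_{c_\iota(\tau)} = {\sf w}-n_\tau$ for every $\iota$ and $\tau$, so each term of the minimum equals $|2n_\tau-{\sf w}|$ and the $\iota$-dependence disappears termwise, not just after taking the minimum. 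This is shorter, avoids the structure theory of $F_0 \subset F_1 \subset F$ entirely, and produces the clean intrinsic formula $\ell({\bf n}) = \min_{\tau:F\to E}|2n_\tau - {\sf w}|$. What the paper's longer argument buys is the expression of $\ell$ in terms of the descended type ${\bf m}$ on the CM subfield, which is the form actually reused in the later discussion of CM-types and the signatures $\varepsilon_{{\bf n},\iota}(\varsigma)$; your argument does not supply that, but the lemma as stated does not require it.
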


\begin{proof}
If $F$ has no CM subfield ($F_1 = F_0$ is totally real), then from Prop.\,\ref{prop:exist_alg_hecke_tot_imag}
is follows that $\ell({}^\iota{\bf n}) = 0,$ which proves the lemma; but this case is not interesting because we would like to have critical points. 
Assume henceforth that $F$ is totally imaginary field that contains a (maximal) 
CM subfield $F_1,$ in which case ${\bf n}$ is the base-change from a pure infinity type ${\bf m}$ over $F_1$. It is clear that 
$\ell({}^\iota{\bf n}) = \ell({}^\iota{\bf m}).$ Suppose $\Hom(F_1,E) = \{\tau_1, \tau_1', \dots, \tau_k, \tau_k'\}$, where $2k = [F_1:\Q]$, 
and $\tau_j$ and $\tau_j'$ have the same restriction to the maximal totally real subfield $F_0$ of $F_1$. Then it  follows from (the proof of) 
Lem.\,\ref{lem:CM-field-embeddings} that: 
$\ell({}^\iota{\bf m}) \ = \ \min_{j} \{|m_{\tau_j} - m_{\tau_j'}|\}.$   
\end{proof}
The above lemma justifies the notation $\ell = \ell({\bf n}).$

\medskip
\subsubsection{\bf A combinatorial lemma} 
\label{sec:comb-lemma}

 The following is a special case 
of a general `combinatorial lemma' that is at the heart of what makes the theory of Eisenstein cohomology work so well for studying the special values 
to automorphic $L$-functions.  

\begin{lemma}
\label{lem:comb-lemma}
The following three conditions are equivalent: 
\begin{enumerate}
\item $s = -1$ and $s = 0$ are critical for $L(s, {}^\iota\chi).$ 
\item $-\ell \leq {\sf w} \leq -4 + \ell.$ 
\item For each $\tau : F \to E$ and $\iota : E \to \C$, there exists $w_{\iota\circ\tau}$ in the Weyl group of $\GL(2)$ such that 
   \begin{enumerate}
   \item $l(w_{\iota\circ\tau}) + l(w_{\overline{\iota\circ\tau}}) = 1,$ and 
   \item $w_{\iota\circ\tau} \cdot \left(\begin{smallmatrix} n_{\iota\circ\tau} & \\ & 0 \end{smallmatrix}\right)$ is dominant. 
   \end{enumerate}
\end{enumerate}
\end{lemma}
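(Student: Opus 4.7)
The plan is to establish the chain of equivalences by separating the (easier) analytic piece (1)$\Leftrightarrow$(2) from the (more combinatorial) piece (2)$\Leftrightarrow$(3), and to use Prop.\,\ref{prop:chi-E-to-C} and Prop.\,\ref{prop:critical} for the first, plus an explicit computation of the dot-action of the Weyl group of $\GL(2)$ for the second.

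For (1)$\Leftrightarrow$(2), I would recall from Prop.\,\ref{prop:chi-E-to-C} that ${}^\iota\chi$ has infinity type $-{}^\iota{\bf n}$, so its purity weight is $-{\sf w}$ while its width (being defined with absolute values) remains $\ell$. Substituting into the formula of Prop.\,\ref{prop:critical}\,(ii), the critical set of $L(s,{}^\iota\chi)$ becomes
$\{\,m\in\Z : 1+{\sf w}/2-\ell/2 \le m \le {\sf w}/2+\ell/2\,\}$.
Requiring both $m=-1$ and $m=0$ to lie in this set translates, after a trivial check, precisely to $-\ell \le {\sf w} \le \ell-4$, which is (2). By Lem.\,\ref{lem:ell-ind-iota}, $\ell$ depends only on ${\bf n}$, so the equivalence is independent of $\iota$.

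For (2)$\Leftrightarrow$(3), I would compute the shifted action of the Weyl group $W=\{1,s\}$ of $\GL(2)$, with $\rho=(1/2,-1/2)$, on the weight $(n_{\iota\circ\tau},0)$: one finds $1\cdot(n,0)=(n,0)$ and $s\cdot(n,0)=(-1,n+1)$. Thus $1\cdot(n_{\iota\circ\tau},0)$ is dominant iff $n_{\iota\circ\tau}\ge 0$, and $s\cdot(n_{\iota\circ\tau},0)$ is dominant iff $n_{\iota\circ\tau}\le -2$. Condition (3)(a) forces exactly one of $w_{\iota\circ\tau},w_{\overline{\iota\circ\tau}}$ to equal $s$; combined with (3)(b), this says that for every conjugate pair $\{\iota\circ\tau,\overline{\iota\circ\tau}\}$, one of the exponents $n_{\iota\circ\tau},n_{\overline{\iota\circ\tau}}$ is $\ge 0$ and the other is $\le -2$.

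Finally I would close the loop with purity. Since $n_{\iota\circ\tau}+n_{\overline{\iota\circ\tau}}={\sf w}$, the condition on the pair in the previous paragraph is equivalent to the larger being $\ge\max(0,{\sf w}+2)$ and the smaller $\le\min(-2,{\sf w})$, i.e.\ to $|n_{\iota\circ\tau}-n_{\overline{\iota\circ\tau}}|\ge{\sf w}+2$ when ${\sf w}\ge -2$ and $\ge -{\sf w}$ when ${\sf w}\le -2$ (and always $\ge 2$). Taking the minimum over $\tau,\iota$, which by definition equals $\ell$, the constraint becomes $\ell\ge{\sf w}+2$ \emph{and} $\ell\ge-{\sf w}$ simultaneously, that is $-\ell\le{\sf w}\le\ell-4$. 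Conversely, if (2) holds, then for any $\tau,\iota$ one has $|n_{\iota\circ\tau}-n_{\overline{\iota\circ\tau}}|\ge\ell\ge\max({\sf w}+2,-{\sf w})$, so the larger of the pair is $\ge({\sf w}+\ell)/2\ge 0$ and the smaller is $\le({\sf w}-\ell)/2\le -2$, and we may choose $w_{\iota\circ\tau}$ to be $1$ or $s$ accordingly. The main obstacle is essentially bookkeeping: keeping straight which sign and which of the pair is $\iota\circ\tau$ vs.\ $\overline{\iota\circ\tau}$, and extracting the binding inequality from the $\tau$ attaining the width, but no new idea is needed.
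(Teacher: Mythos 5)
Your argument is sound in structure and, up to one arithmetic slip, in detail. Note that the paper does not actually prove this lemma: it defers to \cite[Sect.\,3.2]{raghuram-tot-imag}. So your self-contained derivation --- (1)$\Leftrightarrow$(2) from Prop.\,\ref{prop:chi-E-to-C} (infinity type $-{}^\iota{\bf n}$, hence purity weight $-{\sf w}$ and width $\ell$) substituted into Prop.\,\ref{prop:critical}\,(ii), and (2)$\Leftrightarrow$(3) from the explicit twisted Weyl action --- is exactly the intended filling-in; the characterization of (3) you extract (in each conjugate pair one exponent is $\ge 0$ and the other $\le -2$) is precisely what the paper records as the consequence \eqref{eqn:CM-type-n-iota-consequence}. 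The dominance computations ($1\cdot(n,0)$ dominant iff $n\ge 0$; $w_0\cdot(n,0)=(-1,n+1)$ dominant iff $n\le -2$) agree with the paper's normalization of the dot action.

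The one thing to correct: with $n_{\iota\circ\tau}+n_{\overline{\iota\circ\tau}}={\sf w}$, the condition that the smaller exponent be $\le -2$ is equivalent to $|n_{\iota\circ\tau}-n_{\overline{\iota\circ\tau}}|\ge {\sf w}+4$, not ${\sf w}+2$ (larger $\ge {\sf w}+2$ and smaller $\le -2$ makes the difference at least ${\sf w}+4$). As written, your chain ``$\ell\ge{\sf w}+2$ and $\ell\ge-{\sf w}$, that is $-\ell\le{\sf w}\le\ell-4$'' is internally inconsistent, since $\ell\ge{\sf w}+2$ only yields ${\sf w}\le\ell-2$. Replacing ${\sf w}+2$ by ${\sf w}+4$ throughout makes the forward direction come out exactly as $-\ell\le{\sf w}\le\ell-4$; your converse direction is already correct, since it uses the bounds $({\sf w}+\ell)/2\ge 0$ and $({\sf w}-\ell)/2\le -2$, which are precisely ${\sf w}\ge-\ell$ and ${\sf w}\le\ell-4$.
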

The reader is referred to \cite[Sect.\,3.2]{raghuram-tot-imag} for a proof; to compare Statement (2) above to the corresponding statement in \cite[Lem.\,3.16]{raghuram-tot-imag}, note
that $N$, $\ell(\mu,\mu')$, and $a(\mu, \mu')$ of \cite{raghuram-tot-imag} take the values 
$2$, $\ell$, and ${\sf w}/2,$ respectively in this article; Statement (3) and its consequences need to be elaborated upon. 
Given $n_1, n_2 \in \Z$, by $\left(\begin{smallmatrix} n_1 & \\ & n_2 \end{smallmatrix}\right)$ one means the algebraic character of the diagonal torus of 
$\GL(2)$ given by ${\rm diag}(t_1,t_2) \mapsto t_1^{n_1} t_2^{n_2}.$ Such a character is said to be dominant if $n_1 \geq n_2$ (where the choice of 
the Borel subgroup $B$ is taken to be the upper-triangular matrices in $\GL(2)$). 
The Weyl group $W$ of $\GL(2)$, which has order $2$, acts on this torus and hence 
on its characters. The twisted action of the nontrival element $w_0 \in W$ on such a character 
is explicitly given by: 
$w_0 \cdot \left(\begin{smallmatrix} n_1 & \\ & n_2 \end{smallmatrix}\right) =  
\left(\begin{smallmatrix} n_2-1 & \\ & n_1+1 \end{smallmatrix}\right).$
Of course the length $l(w)=1$ if $w = w_0$, and $l(w) = 0$ if $w$ is the trivial element. Now the statement in (3) makes sense. 
Furthermore, in each pair 
$\{\iota\circ\tau, \, \overline{\iota\circ\tau}\}$ of conjugate embeddings $\Sigma_F$, by (a), exactly one of the Weyl group elements is nontrivial and so having length $1$, and 
the other is trivial; this defines a CM-type for $F$ (recall that by a CM-type $\Phi$ for $F$ one means $\Phi \subset \Sigma_F$ such that 
$\Sigma_F = \Phi \amalg \bar{\Phi}$):  
\begin{equation}
\label{eqn:CM-type-n-iota}
\Phi({\bf n}, \iota) \ := \ \left\{ \eta \in \Sigma_F \ :  \ l(w_\eta) = 1\right\}, 
\end{equation}
and a bijection $\beta_{{\bf n}, \iota} : \Phi({\bf n}, \iota) \to S_\infty$ with the set of archimedean places 
of $F.$ Statement (3) also implies: 
\begin{equation}
\label{eqn:CM-type-n-iota-consequence}
\eta \in \Phi({\bf n}, \iota) \implies n_\eta \leq -2, \quad {\rm and} \quad \eta \notin \Phi({\bf n}, \iota) \implies n_\eta \geq 0. 
\end{equation}  
For each $\eta \in \Sigma_F$ we have ($n_\eta \leq -2$ and $n_{\bar\eta} \geq 0$) or ($n_\eta \geq 0$ and $n_{\bar\eta} \leq -2$).  

\medskip
Let us digress for a moment: what condition (3) really says is that the automorphic representation 
$\aInd_{B(\A_F)}^{\GL_2(\A_F)}({}^\iota\chi, 1\!\!1),$ that is algebraically and parabolically induced from the Borel subgroup, 
contributes to the cohomology in degree $[F:\Q]/2$ of the 
Borel--Serre boundary $\partial_BS^G$ of a locally symmetric space $S^G$ for $G = \GL(2)/F$. This is the starting point of using Eisenstein cohomology for $\GL(2)/F$ 
to deduce results on the special values of $L$-functions attached to algebraic Hecke characters that was pioneered by Harder in \cite{harder}. 
In that paper, Harder proves, under the conditions of Lem.\,\ref{lem:comb-lemma}, a rationality result for the ratio of critical values 
$L(-1, {}^\iota\chi)/L(0, {}^\iota\chi),$ from which an analogous rationality result follows for the ratios of other successive pair of critical values by allowing ourselves 
Tate-twists of $\chi$; Statement (2) bounds the possible twists we can make, and it is a striking aspect of the lemma that one so obtains all successive pairs 
of critical values, {\it no more and no less!} Let me note that in \cite{harder} such a lemma and its consequences are implicit; the explicit version of the above lemma and its philosophical content came out in my work with Harder \cite{harder-raghuram-book}; that was further developed in my sequel \cite{raghuram-tot-imag}--especially, see paragraph 5.3.2.2 in \cite{raghuram-tot-imag}.

\bigskip
\subsubsection{\bf A tale of two signatures} 
\label{sec:two-signatures}
Fix an ordering on the set $S_\infty$ of archimedean places 
of $F$; say $S_\infty = \{w_1,\dots,w_{r}\},$ where $r = d_F/2 = [F:\Q]/2.$  
For each $\varsigma \in \Gal(\bar\Q/\Q)$ define a permutation $\pi_{{\bf n}, \iota}(\varsigma)$ of $S_\infty$ by the diagram:
$$
\xymatrix{
\Phi({\bf n}, \iota) \ar[rr]^{\beta_{{\bf n}, \iota}} \ar[d]_{\varsigma \circ -} & & S_\infty \ar[d]^{\pi_{{\bf n}, \iota}(\varsigma)} \\ 
\Phi({\bf n}, \varsigma\circ\iota) \ar[rr]^{\beta_{{\bf n}, \varsigma\circ\iota}} & & S_\infty, 
}
$$
where the left vertical arrow needs an explanation: 
the $\iota$-independence of (1) and/or (2) of Lem.\,\ref{lem:comb-lemma} translates to $\iota$-independence of (3) also, whence  
if $\iota \circ \tau \in \Phi({\bf n}, \iota)$, then $\varsigma  \circ (\iota \circ \tau) \in \Phi({\bf n}, \varsigma \circ \iota).$ Put in other words, 
in general, a Galois conjugate of a CM-type of a totally imaginary field need not be a CM-type; however, since ${\bf n}$ is a base-change of an infinity-type ${\bf m}$ from $F_1$ (the key ingredient that went into $\iota$-independence) we see that the CM-type $\Phi({\bf n}, \iota)$ for $F$ is deduced from the CM-type 
$\Phi({\bf m}, \iota)$ for $F_1$ under the canonical map $\Sigma_F \to \Sigma_{F_1};$ and for a CM-field such as $F_1$, a Galois conjugate of a CM-type is 
a CM-type; hence we deduce that $(\varsigma \circ -)(\Phi({\bf n}, \iota)) = \Phi({\bf n}, \varsigma\circ\iota)$ is a CM-type. The sign of the permutation 
$\pi_{{\bf n}, \iota}(\varsigma)$ defines 
the signature: 
\begin{equation}
\label{eqn:signature}
\varepsilon_{{\bf n}, \iota}(\varsigma) \ := \ {\rm sgn}(\pi_{{\bf n}, \iota}(\varsigma)). 
\end{equation}

\medskip
Let us digress for another moment: the key idea in the theory of Eisenstein cohomology is to see the map induced in cohomology by 
the standard intertwining operator: 
$$
T_{\rm st}(s)|_{s = -1} \ : \ 
\aInd_{B(\A_F)}^{\GL_2(\A_F)}({}^\iota\chi, 1\!\!1) \ \longrightarrow \ 
\aInd_{B(\A_F)}^{\GL_2(\A_F)}(1\!\!1(1), {}^\iota\chi(-1))
$$
is realised in terms of the restriction map from total cohomology of $S^G$ to the cohomology of the isotypic component cut out by these induced representations in the cohomology of the Borel--Serre boundary $\partial_BS^G$. (See \cite[pp.\,82--83]{harder-raghuram-book}.)
The infinity type of the inducing representation ${}^\iota\chi \otimes 1\!\!1$ in the domain of $T_{\rm st}(s)|_{s = -1}$ corresponds to the weight 
${}^\iota\underline{\bf n} := \left(\begin{smallmatrix} {}^\iota{\bf n} & \\ & 0 \end{smallmatrix}\right)$; similarly  
the infinity type of the inducing representation $1\!\!1(1) \otimes {}^\iota\chi(-1)$ for the codmain is the weight 
${}^\iota\tilde{\underline{\bf n}} := \left(\begin{smallmatrix} -1 & \\ & {}^\iota{\bf n}+1 \end{smallmatrix}\right)$. 

\medskip
Complementary to $\Phi({\bf n}, \iota)$, we define the CM-type 
\begin{equation}
\label{eqn:CM-type-n-iota-tilde}
\Phi(\tilde{\bf n}, \iota) \ := \ \left\{\eta \in \Sigma_F \ :  \ l(w_\eta) = 0 \right\} \ = \ \left\{\eta \in \Sigma_F \ :  n_\eta \geq 0\right\}.
\end{equation}
Note that $\Phi({\bf n}, \iota) = \overline\Phi(\tilde{\bf n}, \iota),$ and the bijection 
$\beta_{\tilde{\bf n}, \iota} : \Phi(\tilde{\bf n}, \iota) \to S_\infty$ is the complex-conjugate of $\beta_{{\bf n}, \iota}.$
For each $\varsigma \in \Gal(\bar\Q/\Q)$ define a permutation $\pi_{\tilde{\bf n}, \iota}(\varsigma)$ of $S_\infty$ by the diagram:
$$
\xymatrix{
\Phi(\tilde{\bf n}, \iota) \ar[rr]^{\beta_{\tilde{\bf n}, \iota}} \ar[d]_{\varsigma \circ -} & & S_\infty \ar[d]^{\pi_{\tilde{\bf n}, \iota}(\varsigma)} \\ 
\Phi(\tilde{\bf n}, \varsigma\circ\iota) \ar[rr]^{\beta_{\tilde{\bf n}, \varsigma\circ\iota}} & & S_\infty, 
}
$$
the sign of which defines the signature 
\begin{equation}
\label{eqn:signature-tilde}
\varepsilon_{\tilde{\bf n}, \iota}(\varsigma) \ := \ {\rm sgn}(\pi_{\tilde{\bf n}, \iota}(\varsigma)). 
\end{equation}

\bigskip
\subsubsection{\bf The main theorem on $L$-values} 

We are now ready to state the main theorem which is a variation of a result due to Harder:

\begin{thm}
\label{thm:main}
Let $F$ be a totally imaginary number field, and $E$ a number field that is Galois over $\Q$ and containing a copy of $F$. Let 
${\bf n} = \sum_\tau n_\tau \tau  \in \Z[\Hom(F, E)]$ be an infinity type satisfying the purity condition of Lem.\,\ref{lem:purity-sheaf}, 
with purity weight ${\sf w}.$
Suppose 
$\chi$ is an algebraic Hecke character of $F$ with values in $E$ of infinity type ${\bf n}.$ Take any embedding $\iota : E \to \C,$ and 
let ${}^\iota\chi$ be the character as in Prop.\,\ref{prop:chi-E-to-C}.  
Assume the condition on ${\bf n}$ from  Lem.\,\ref{lem:comb-lemma}: 
$$-\ell \leq {\sf w} \leq -4 + \ell.$$  
Hence $\ell \geq 2$ and ${\bf n}$ is the base-change to $F$ from an infinity-type of a maximal CM-subfield of $F$. 
Suppose $m, m+1 \in {\rm Crit}(\bL(s, \chi));$ see Prop.\,\ref{prop:critical}. 
Then: 
\begin{equation}
\label{eqn:harder-variation-1}
|\delta_{F/\Q}|^{1/2} \frac{L(m, {}^\iota\chi)}{L(m+1, {}^\iota\chi)} \ \in \iota(E), 
\end{equation}
and, furthermore, for every $\varsigma \in \Gal(\bar\Q/\Q)$ we have the reciprocity law: 
\begin{equation}
\label{eqn:harder-variation-2}
\varsigma \left(|\delta_{F/\Q}|^{1/2} \frac{L(m, {}^\iota\chi)}{L(m+1, {}^\iota\chi)}\right) \ = \ 
\varepsilon_{{\bf n}, \iota}(\varsigma) \cdot 
\varepsilon_{\tilde{\bf n}, \iota}(\varsigma) \cdot
|\delta_{F/\Q}|^{1/2} 
\frac{L(m, {}^{\varsigma\circ\iota}\chi)}{L(m+1, {}^{\varsigma\circ\iota}\chi)}.
\end{equation}
\end{thm}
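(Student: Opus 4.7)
The plan is to realise the normalised $L$-value ratio as the period constant of the standard intertwining operator in the Eisenstein cohomology of $\GL_2/F$, and then to extract the reciprocity law, including the missing signature, from the combinatorics of how $\GalQ$ permutes the archimedean components of the boundary cohomology.

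First I would reduce to the case $m = -1,\ m+1 = 0.$ Tate-twisting $\chi$ by $|\!|\ |\!|^k$ shifts ${\bf n} \mapsto {\bf n} - k$ and ${\sf w} \mapsto {\sf w} - 2k$ without altering the width $\ell$ or the discriminant factor, and the hypothesis $-\ell \leq {\sf w} \leq -4 + \ell$ is exactly what is needed so that every successive pair of critical integers is reachable from $(-1,0)$ by such a twist and the combinatorial Lem.\,\ref{lem:comb-lemma} applies to the twisted character. The signatures $\varepsilon_{{\bf n},\iota}$ and $\varepsilon_{\tilde{\bf n},\iota}$ are insensitive to an integer Tate-twist of the central character.

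Next I would set up the Eisenstein-cohomology framework for $G' := \GL_2/F$ as in \cite{harder}. By Lem.\,\ref{lem:comb-lemma}(3), the parabolically-induced representation $\aInd_{B(\A_F)}^{G'(\A_F)}({}^\iota\chi \otimes 1\!\!1)$ contributes to the Borel--Serre boundary cohomology in degree $[F:\Q]/2$ with coefficients in the sheaf attached to the weight $\underline{\bf n} = \left(\begin{smallmatrix} {\bf n} & \\ & 0 \end{smallmatrix}\right)$. The standard intertwining operator $T_{\rm st}(s)$ from $\aInd({}^\iota\chi, 1\!\!1)$ to $\aInd(1\!\!1(1),\,{}^\iota\chi(-1))$ factors as $\prod_v T_v(s);$ the non-archimedean operators, by the Gindikin--Karpelevich computation and the Euler-product definition of $L_f(s,\chi)$ in \ref{sec:defn-l-fn}, contribute the ratio $L_f(-1,{}^\iota\chi)/L_f(0,{}^\iota\chi)$ when evaluated at $s = -1$, while the archimedean operators, normalised by the local functional equation and the self-dual Haar measures on the adelic quotient, together contribute the factor $|\delta_{F/\Q}|^{1/2}$ times the corresponding ratio of archimedean $L$-factors. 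Both induced modules admit natural $E$-structures coming from the sheaves $\tM_{\underline{\bf n},E}$ on the two boundary strata, and the map in cohomology induced by $T_{\rm st}(-1)$ is defined over $\bar\Q$; comparing the two $E$-rational structures identifies the scalar $|\delta_{F/\Q}|^{1/2}\, L(-1,{}^\iota\chi)/L(0,{}^\iota\chi)$ with a ratio of periods and hence with an element of $\iota(E),$ yielding \eqref{eqn:harder-variation-1}.

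For the reciprocity law I would track how $\varsigma \in \GalQ$ acts on the K\"unneth-like factorisation of the one-dimensional archimedean pieces of the boundary cohomology over $S_\infty.$ In each conjugate pair $\{\eta, \bar\eta\}$ the Kostant representative controlling the archimedean factor for the domain of $T_{\rm st}(-1)$ has length one at exactly the embedding singled out by $n_\eta \leq -2$ (see \eqref{eqn:CM-type-n-iota-consequence}), producing the CM-type $\Phi({\bf n},\iota)$ with the bijection $\beta_{{\bf n},\iota} : \Phi({\bf n},\iota) \to S_\infty;$ analogously $\Phi(\tilde{\bf n},\iota)$ indexes the codomain. Applying $\varsigma$ sends the $\iota$-component of the $\bar\Q$-cohomology to its $\varsigma\circ\iota$-component; when the resulting identifications of tensor factors are written in terms of the fixed ordering on $S_\infty,$ one obtains precisely the permutations $\pi_{{\bf n},\iota}(\varsigma)$ and $\pi_{\tilde{\bf n},\iota}(\varsigma)$ of Sect.\,\ref{sec:two-signatures}. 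Their signs $\varepsilon_{{\bf n},\iota}(\varsigma)$ and $\varepsilon_{\tilde{\bf n},\iota}(\varsigma)$ enter on the domain and codomain sides, respectively, of the comparison effected by $T_{\rm st}(-1),$ and their product is the correction appearing in \eqref{eqn:harder-variation-2}.

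The main obstacle, where the original account of Harder \cite{harder} lost a sign, is to fix a coherent rational generator of each one-dimensional archimedean factor and then verify that under $\varsigma$ the permutation of the tensor-ordering on $S_\infty$ produces exactly $\varepsilon_{{\bf n},\iota}(\varsigma)\cdot\varepsilon_{\tilde{\bf n},\iota}(\varsigma)$ and not the trivial sign. This is automatic when $F$ is CM, because then $\Phi({\bf n},\iota)$ and $\Phi(\tilde{\bf n},\iota)$ are both stabilised by the action of $\GalQ$ through the quotient $\cG/\cN$ of Prop.\,\ref{prop:descend-to-F_1} and the two sign contributions cancel pairwise, but is genuinely nontrivial for a general totally imaginary $F,$ consistently with the counterexample in Sect.\,\ref{sec:counterexample}.
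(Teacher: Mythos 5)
Your outline follows essentially the same route the paper itself indicates: reduction to the pair $(-1,0)$ by Tate twists via Lem.\,\ref{lem:comb-lemma}, Harder's Eisenstein cohomology for $\GL(2)/F$ with the standard intertwining operator on boundary cohomology supplying the normalised ratio of $L$-values, and the signatures $\varepsilon_{{\bf n},\iota}(\varsigma)\cdot\varepsilon_{\tilde{\bf n},\iota}(\varsigma)$ arising from how $\varsigma$ permutes the rational generators of the archimedean (unipotent-cohomology) factors indexed by $\Phi({\bf n},\iota)$ and $\Phi(\tilde{\bf n},\iota)$ on the domain and codomain of $T_{\rm st}(-1)$. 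The paper gives no more detail than this, deferring to \cite[Thm.\,5.16]{raghuram-tot-imag} and to \cite[Cor.\,4.2.2, Sect.\,2.4]{harder}; the only slight imprecision in your account is the claim that in the CM case the two CM-types are ``stabilised'' by $\GalQ$ --- rather, $\varsigma$ carries $\Phi({\bf n},\iota)$ to $\Phi({\bf n},\varsigma\circ\iota)$, and Lem.\,\ref{lem:CM-field-embeddings} forces the two permutations $\pi_{{\bf n},\iota}(\varsigma)$ and $\pi_{\tilde{\bf n},\iota}(\varsigma)$ to coincide, whence their signs cancel.
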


\bigskip
\subsubsection{\bf Remarks on the proof}
\label{sec:trivial-sign-CM}
The above theorem is a special case of a more general theorem on Rankin--Selberg $L$-functions 
for $\GL(n) \times \GL(n')$ over a totally imaginary field; see \cite[Thm.\,5.16, (1), (2)]{raghuram-tot-imag}. 
When $F$ is a CM field, one has $\varsigma \circ \c \circ \eta = \c \circ \varsigma \circ \eta$ (see Lem.\,\ref{lem:CM-field-embeddings}), 
hence, 
$\pi_{{\bf n}, \iota}(\varsigma) = \pi_{\tilde{\bf n}, \iota}(\varsigma)$ an equality of the permutations of $S_\infty$; whence, 
$\varepsilon_{{\bf n}, \iota}(\varsigma) = \varepsilon_{\tilde{\bf n}, \iota}(\varsigma).$ In particular, 
the reciprocity law in \eqref{eqn:harder-variation-2} is the same as \eqref{eqn:harder-2}. 
As will be seen below this is no longer the case for a general total imaginary field; even though one has an equality of sets
$\c(\Phi({\bf n}, \varsigma\circ\iota)) = \varsigma(\c(\Phi({\bf n}, \iota))),$ the permutations $\pi_{{\bf n}, \iota}(\varsigma)$ and $\pi_{\tilde{\bf n}, \iota}(\varsigma)$
can be distinct, and the signature $\varepsilon_{{\bf n}, \iota}(\varsigma) \cdot \varepsilon_{\tilde{\bf n}, \iota}(\varsigma)$ can be nontrivial. (This is in essence the explanation of the counterexample in Sect.\,\ref{sec:counterexample}.)
Even though Thm.\,\ref{thm:main} and especially \eqref{eqn:harder-variation-2}, 
for a general totally imaginary field, are not stated like this 
in Harder\,\cite{harder}, its proof follows by putting together the proof of 
\cite[Cor.\,4.2.2]{harder} amplified by the discussion in \cite[Sect.\,2.4]{harder} of a rational system of generators for the unipotent cohomology of the coefficient system on $\GL(2).$ The signature $\varepsilon_{{\bf n}, \iota}(\varsigma)$ is described in \cite[(2.4.1)]{harder}.

\bigskip
\subsection{Compatibility with Deligne's Conjecture}
In this subsection we recall Deligne's celebrated conjecture on the special values of motivic $L$-functions, as applied to ratios of successive critical values, 
and also recall a result of Deligne on the ratios of motivic periods  
that gives a motivic explanation to the ratio of successive critical $L$-values for a Hecke $L$-function. 

\subsubsection{\bf Statement of Deligne's Conjecture}
We will freely use the notation of \cite{deligne-corvallis}; a motive $M$ over $\Q$ with coefficients in a field $E$ will be thought in terms of its Betti, de Rham, and $\ell$-adic realizations. For a critical motive $M$ we have its periods $c^\pm(M) \in (E \otimes \C)^\times$ as in {\it loc.\,cit.,} that are well-defined in $(E \otimes \C)^\times/E^\times.$ We begin with a relation between the two periods over a totally imaginary base field $F$; recall that $F_0$ is the largest totally real subfield of $F$; then $F_0$ admits at most one totally imaginary quadratic extension contained inside $F$; if $F$ does admit such a CM subfield, we denote it as $F_1$; and if there is no such extension inside $F$, then we put $F_1 = F_0.$ If $F_1$ is indeed a CM field, and suppose $F_1 = F_0(\sqrt{D})$ for a totally negative $D \in F_0$, then define 
$$
\Delta_{F_1} \ := \ \sqrt{N_{F_0/\Q}(D)}, \quad \Delta_F \ := \ \Delta_{F_1}^{[F:F_1]}.
$$ 
If $F_1 = F_0$ is totally real, then define
$$
\Delta_{F_1} \ := \ 1, \quad \Delta_F \ := \ \Delta_{F_1}^{[F:F_1]} = 1. 
$$ 
The following proposition is proved in \cite{deligne-raghuram}: 

\medskip
\begin{prop}
\label{prop:deligne}
Let $M_0$ be a pure motive of rank $1$ over a totally imaginary number field $F$ with coefficients in a number field $E$. Put $M = \Res_{F/\Q}(M_0),$ and suppose that $M$ has no middle Hodge type. Let $c^\pm(M)$ be the periods defined in \cite{deligne-corvallis}. 
Then 
$$
\frac{c^+(M)}{c^-(M)} \ = \  1 \otimes \Delta_F, \quad \mbox{in $(E \otimes \C)^\times/ E^\times$.}
$$
\end{prop}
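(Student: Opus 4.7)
The plan is to compute the two periods $c^\pm(M)$ directly from the realizations of $M = \Res_{F/\Q}(M_0)$ and then take their ratio. The Betti realization decomposes as $M_B = \bigoplus_{\tau \in \Sigma_F} L_\tau$, where each $L_\tau$ is a free $E$-module of rank one and the infinite Frobenius $F_\infty$ interchanges $L_\tau$ with $L_{\bar\tau}$ via complex conjugation of Betti classes. The de Rham realization $M_{dR}$ is a free $F \otimes_\Q E$-module of rank one, equipped with the Hodge filtration, and the comparison $M_B \otimes_\Q \C \simeq M_{dR} \otimes_\Q \C$ respects the decomposition by embeddings: on $L_\tau \otimes \C$ the Hodge type is a single $(p_\tau, q_\tau)$. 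The hypothesis that $M$ has no middle Hodge type forces $p_\tau \neq q_\tau$ for every $\tau$, so $F_\infty$ genuinely swaps the two lines in each conjugate pair $\{L_\tau, L_{\bar\tau}\}$ and the $\pm$-eigenspaces $M_B^\pm$ each have rank $r = d_F/2$ over $E$.

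First I would fix compatible bases. Choose $E$-basis vectors $e_\tau$ of $L_\tau$ with $F_\infty e_\tau = e_{\bar\tau}$, so that $M_B^+$ and $M_B^-$ admit $E$-bases $\{e_\tau + e_{\bar\tau}\}$ and $\{e_\tau - e_{\bar\tau}\}$ indexed by choices of representatives for each conjugate pair. On the de Rham side, take a generator $\omega$ of $M_{dR}$ over $F \otimes_\Q E$, a $\Q$-basis $\{a_1,\dots,a_{d_F}\}$ of $F$, and form the $E$-basis $\omega_j := a_j \omega$ of $M_{dR}$. Under comparison, $\omega \mapsto \sum_\tau \alpha_\tau e_\tau$ for scalars $\alpha_\tau$, so $\omega_j \mapsto \sum_\tau \tau(a_j)\alpha_\tau\, e_\tau$. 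The periods are then the determinants of the comparison $M_{dR}/F^\pm \otimes \C \simeq M_B^\pm \otimes \C$: modulo $F^+$ only the contributions from the CM-type $\Phi^- = \{\tau : p_\tau < q_\tau\}$ survive, so up to $E^\times$ one has $c^+(M) = \det[\tau(a_j)]_{\tau \in \Phi^-,\, j} \cdot \prod_{\tau \in \Phi^-} \alpha_\tau$, and analogously $c^-(M)$ with $\Phi^+$ in place of $\Phi^-$.

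Passing to the ratio $c^+(M)/c^-(M)$, the $\pm$-basis change on the Betti side contributes only rational factors, and the products $\prod_{\tau \in \Phi^\pm} \alpha_\tau$ pair off via $\alpha_{\bar\tau} = \overline{\alpha_\tau}$ (which encodes the fact that the Betti/de Rham comparison is compatible with the $\R$-structures on both sides); both of these contributions vanish in $(E \otimes \C)^\times / E^\times$. What survives is the ratio of Vandermonde-type determinants $\det[\tau(a_j)]_{\tau \in \Phi^-}/\det[\tau(a_j)]_{\tau \in \Phi^+}$ modulo $E^\times$.

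The main obstacle will be identifying this ratio with $1 \otimes \Delta_F$, and this is where the maximal CM (or totally real) subfield $F_1 \subset F$ enters decisively. The restriction map $\Sigma_F \to \Sigma_{F_1}$ partitions embeddings of $F$ into fibers of size $[F:F_1]$, and, crucially, the splitting $\Sigma_F = \Phi^+ \amalg \Phi^-$ is the pullback of the corresponding splitting of $\Sigma_{F_1}$ under $F_\infty|_{F_1}$; this uses Lem.\,\ref{lem:CM-field-embeddings} together with the observation that the purity of the infinity type of $M_0$ forces its Hodge type to be a base-change from $F_1$. Choosing $\{a_j\}$ compatibly with the tower $\Q \subset F_0 \subset F_1 \subset F$, and, when $F_1$ is CM, with a choice of $\sqrt{D}$ generating $F_1$ over $F_0$, the Vandermonde determinants factor through the tower: the contributions from the $F/F_1$ layer are permuted symmetrically by $F_\infty$ and cancel in the ratio, while each of the $[F_0:\Q]$ places contributes a factor of $\sqrt{D}^{[F:F_1]}$ from the $F_1/F_0$ layer, producing $\Delta_{F_1}^{[F:F_1]} = \Delta_F$ up to $E^\times$. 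When $F_1 = F_0$ is totally real (so $F$ carries no CM subfield), the splitting degenerates — the two halves $\Phi^\pm$ are interchanged by an element of $\Gal(\bar\Q/\Q)$ that acts trivially on $E$-valued determinants — and the ratio is $1$, matching $\Delta_F = 1$.
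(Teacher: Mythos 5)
First, a point of reference: the paper does not prove Prop.~\ref{prop:deligne}; it is quoted from the preprint \cite{deligne-raghuram} (``in preparation''), so there is no in-paper argument to measure yours against. Your overall strategy --- decompose $M_B$ by embeddings, write the comparison isomorphism in $E$-rational bases adapted to the CM-type $\Phi^\pm$ cut out by the Hodge types, and reduce $c^+(M)/c^-(M)$ to a ratio of determinants of matrices $[\tau(a_j)]$ which is then analyzed through the tower $\Q\subset F_0\subset F_1\subset F$ --- is certainly the natural one, and the tower analysis at the end is in the same spirit as the discriminant computations the paper does carry out (Lem.~\ref{lem:compatibility_tot_imag} and Lem.~\ref{lem:equality-signs_tot_imag}).

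However, there is a genuine gap at the step where you discard the transcendental part of the period matrix. You write that the products $\prod_{\tau\in\Phi^\pm}\alpha_\tau$ ``pair off via $\alpha_{\bar\tau}=\overline{\alpha_\tau}$'' and that ``both of these contributions vanish in $(E\otimes\C)^\times/E^\times$.'' The relation $\alpha_{\bar\tau}=\overline{\alpha_\tau}$ only gives $\prod_{\Phi^+}\alpha_\tau=\overline{\prod_{\Phi^-}\alpha_\tau}$, so the contribution of the $\alpha$'s to $c^+/c^-$ is $z/\bar z$ with $z=\prod_{\Phi^-}\alpha_\tau$: a unimodular complex number, not an element of $E^\times$. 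Nothing in your argument rules out this being transcendental --- and indeed, for a motive over $\Q$ \emph{without} the rank-one-over-$F$ structure (e.g.\ the motive of a non-CM modular form) the analogous ratio $c^+/c^-$ is expected to be transcendental, so this cancellation is precisely where the hypothesis ``$M_0$ has rank $1$ over $F$'' must be used in an essential way (via the full $F\otimes E$-module structure on \emph{all} realizations and the $F\otimes E\otimes\C$-linearity of the comparison isomorphism, or by reducing $c^+/c^-$ to the determinant period $\delta(M)$, which is algebraic). As written, the argument shows only that $c^+/c^-$ has the form $w/\bar w$ times a determinant ratio; identifying it with $1\otimes\Delta_F$ modulo $E^\times$ requires exactly the input you have skipped. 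Two secondary remarks: the matrices $[\tau(a_j)]_{\tau\in\Phi^\mp,\,j}$ are of size $d_F/2\times d_F$, so you must specify which columns (i.e.\ which $E$-basis of the quotient $M_{dR}/F^{\pm}$) you take before speaking of their determinants, and the choice matters up to more than $E^\times$ unless handled consistently between the $+$ and $-$ computations; and the case $F_1=F_0$ totally real is vacuous here, since purity then forces a constant infinity type, i.e.\ $p_\tau=q_\tau$, contradicting the hypothesis that $M$ has no middle Hodge type.
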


Let us also note that $1 \otimes \Delta_F$ is $\pm 1$ in each component of $(E \otimes \C)^\times/ E^\times$, since its square is trivial.  
Based on Prop.\,\ref{prop:deligne}, Deligne's conjecture \cite{deligne-corvallis} for the ratios of successive critical values of the completed $L$-function  of a rank-one motive may be stated as:  

\medskip
\begin{con}[Deligne]
\label{con:deligne}
Let $M_0$ be a pure motive of rank $1$ over a totally imaginary $F$ with coefficients in $E$. Put $M = \Res_{F/\Q}(M_0),$ and suppose that 
$M$ has no middle Hodge type. For $\iota : E \to \C$ let $L(s, \iota, M)$ denote the completed $L$-function attached to $(M, \iota).$ Put 
$L(s,M) =  \{L(s, \iota, M)\}_{\iota : E \to \C}$ for the array of $L$-functions taking values in $E \otimes \C.$ Suppose $m$ and $m+1$ are critical 
integers for $L(s,M)$, and assuming that $L(m+1,M) \neq 0$, we have
$$
\frac{L(m,M)}{L(m+1,M)} \ = \  1\otimes \i^{d_F/2} \Delta_F, 
$$
where the equality is in $(E \otimes \C)^\times/ E^\times.$
\end{con}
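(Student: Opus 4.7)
The plan is to derive the stated formula from Deligne's general conjecture on motivic $L$-values, applied at each of the two consecutive critical integers $m$ and $m+1,$ combined with Proposition \ref{prop:deligne}. Throughout, all congruences are in $(E\otimes\C)^\times/E^\times.$

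First, I would invoke Deligne's conjecture in the form $L_f(m,M) \equiv (2\pi i)^{m\cdot d^{(-1)^m}(M)}\,c^{(-1)^m}(M),$ where $L_f$ is the finite part, and the standard recipe $c^+(M(k)) \equiv (2\pi i)^{k\,d^+(M(k))}\,c^{(-1)^k}(M)$ for Tate twists has been applied, together with $d^+(M(k)) = d^{(-1)^k}(M)$ (because $F_\infty$ acts on $\Q(k)_B$ by $(-1)^k$). Since $M = \Res_{F/\Q}(M_0)$ for a rank-one $M_0$ over a totally imaginary $F$ and $M$ has no middle Hodge type, complex conjugation on $M_B$ permutes the rank-one components indexed by conjugate pairs of embeddings; hence $d^+(M) = d^-(M) = d_F/2.$ Dividing at $m$ and $m+1,$ the Tate-twist exponents combine to $(2\pi i)^{-d_F/2},$ while the ratio of periods becomes $c^{(-1)^m}(M)/c^{(-1)^{m+1}}(M),$ which Proposition \ref{prop:deligne} identifies with $1 \otimes \Delta_F^{\pm 1},$ and hence with $1 \otimes \Delta_F$ modulo $E^\times$ since $\Delta_F^2 \in \Q^\times.$ This yields
\[
\frac{L_f(m,M)}{L_f(m+1,M)} \ \equiv \ (2\pi i)^{-d_F/2} \cdot (1 \otimes \Delta_F) \pmod{E^\times}.
\]

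Second, I would pass to the completed $L$-function via $L(s,M) = L_\infty(s,M) L_f(s,M).$ Since $F$ is totally imaginary and $M$ has no middle type, $L_\infty(s,M)$ is a product of $d_F/2$ factors of the shape $\Gamma_\C(s+a_w);$ using $\Gamma_\C(s) = 2(2\pi)^{-s}\Gamma(s),$ each factor contributes $\Gamma_\C(m+a_w)/\Gamma_\C(m+1+a_w) = (2\pi)(m+a_w)^{-1},$ equal to $(2\pi)$ modulo $\Q^\times.$ Multiplying over the $d_F/2$ complex places gives $L_\infty(m,M)/L_\infty(m+1,M) \equiv (2\pi)^{d_F/2}$ modulo $E^\times.$ The $(2\pi)^{d_F/2}$ then cancels the $(2\pi)^{-d_F/2}$ coming from the decomposition $(2\pi i)^{-d_F/2} = (2\pi)^{-d_F/2}\cdot i^{-d_F/2},$ leaving
\[
\frac{L(m,M)}{L(m+1,M)} \ \equiv \ i^{-d_F/2} \cdot (1\otimes \Delta_F) \ \equiv \ 1 \otimes i^{d_F/2}\Delta_F \pmod{E^\times},
\]
where the last step uses $i^{d_F} \in \{\pm 1\} \subset E^\times$ (note that $d_F = 2 r_2$ is even for totally imaginary $F$).

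The main obstacle I expect is the exact bookkeeping of the transcendental constants: the precise cancellation of $(2\pi)^{d_F/2}$ between the archimedean $L$-factor ratio and the Tate-twist powers of $(2\pi i)$ must leave behind only a power of $i,$ and one must verify the recipe $c^+(M(k)) \equiv (2\pi i)^{k\,d^+(M(k))}c^{(-1)^k}(M)$ with its precise parity conventions. A secondary obstacle is that Deligne's conjecture itself is unproved in full generality; accordingly, what is sketched is a derivation of Conjecture \ref{con:deligne} from Deligne's Corvallis conjecture together with Proposition \ref{prop:deligne}. The output is therefore a conditional statement: assuming Deligne's conjecture for $M = \Res_{F/\Q}(M_0),$ the stated ratio holds, thereby giving the motivic explanation for the signature-corrected formulation of Theorem \ref{thm:main}.
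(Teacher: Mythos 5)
Your proposal is correct and follows essentially the same route as the paper: Deligne's Corvallis conjecture for $L_f$ at the two consecutive critical integers (via Tate twists), the identity $d^{\pm}(M)=d_F/2$ from the absence of a middle Hodge type, the ratio $L_\infty(m,M)/L_\infty(m+1,M)=1\otimes(2\bfgreek{pi})^{d_F/2}$, and Prop.~\ref{prop:deligne} for $c^+(M)/c^-(M)=1\otimes\Delta_F$. Your bookkeeping of the $(2\pi i)$-powers and the observation that $\i^{-d_F/2}\equiv\i^{d_F/2}$ and $\Delta_F^{\pm1}\equiv\Delta_F$ modulo $E^\times$ are exactly the cancellations implicit in \eqref{eqn:deligne-con-ratios}, and you correctly flag that the result is conditional on Deligne's conjecture, which is why the statement is recorded as Conj.~\ref{con:deligne} rather than a theorem.
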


A word of explanation is in order, since, in \cite{deligne-corvallis}, Deligne formulated his conjecture for critical values of $L_f(s,M)$--the finite-part of the $L$-function attached to $M;$ which  takes the shape: 
$$
\frac{L_f(m, M)}{L_f(m+1, M)} \ = \ (1 \otimes (2 \bpi \i)^{-d^\pm(M)}) \, \frac{c^\pm(M)}{c^\mp(M)}, \quad \mbox{in $E \otimes \C$}.
$$
The assumption on $M$ that there is middle Hodge type gives $d^{\pm}(M) = d(M)/2 = d_F/2.$ 
Knowing the $L$-factor at infinity we have: 
$$
L_\infty(m, M)/L_\infty(m+1, M) = 1 \otimes (2\bpi)^{d_F/2}.
$$ 
For the completed $L$-function, we get:
\begin{equation}
\label{eqn:deligne-con-ratios}
\frac{L(m, M)}{L(m+1, M)} \ = \ (1 \otimes \i^{d_F/2}) \, \frac{c^\pm(M)}{c^\mp(M)}. 
\end{equation}
It is clear now that \eqref{eqn:deligne-con-ratios} and Prop.\,\ref{prop:deligne} give Conj.\,\ref{con:deligne}. 

\medskip
Using the conjectural correspondence between an algebraic Hecke character of $F$ with values in $E$, and a rank-one motive over $F$ with coefficients
in $E$, Conj.\,\ref{con:deligne} may be restated as the following:

\begin{con}[Deligne's Conjecture for ratios of critical values of Hecke $L$-functions]
\label{con:deligne-hecke}
Let the notations and hypothesis be as in Thm.\,\ref{thm:main}. 
Then: 
\begin{equation}
\label{eqn:harder-deligne-1}
\i^{d_F/2} \Delta_F \, \frac{L(m, {}^\iota\chi)}{L(m+1, {}^\iota\chi)} \ \in \iota(E), 
\end{equation}
and, furthermore, for every $\varsigma \in \Gal(\bar\Q/\Q)$ we have the reciprocity law: 
\begin{equation}
\label{eqn:harder-deligne-2}
\varsigma \left(\i^{d_F/2} \Delta_F \, \frac{L(m, {}^\iota\chi)}{L(m+1, {}^\iota\chi)}\right) \ = \ 
\i^{d_F/2} \Delta_F \, 
\frac{L(m, {}^{\varsigma\circ\iota}\chi)}{L(m+1, {}^{\varsigma\circ\iota}\chi)}.
\end{equation}
\end{con}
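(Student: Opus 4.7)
The plan is to derive Conjecture~\ref{con:deligne-hecke} from Theorem~\ref{thm:main} and Proposition~\ref{prop:deligne}: the two statements differ only in the choice of normalizing constant, $|\delta_{F/\Q}|^{1/2}$ versus $\i^{d_F/2}\Delta_F$, so it suffices to prove that the ratio
$$
\epsilon_F \ := \ \frac{\i^{d_F/2}\Delta_F}{|\delta_{F/\Q}|^{1/2}}
$$
lies in $\bar{\Q}^\times$ and transforms under every $\varsigma \in \GalQ$ by exactly the signature $\varepsilon_{{\bf n},\iota}(\varsigma)\cdot\varepsilon_{\tilde{\bf n},\iota}(\varsigma)$ appearing in \eqref{eqn:harder-variation-2}. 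Given this, multiplying both sides of \eqref{eqn:harder-variation-2} by $\epsilon_F$ and using the Galois-transformation of $\epsilon_F$ to absorb the signature converts the reciprocity law into \eqref{eqn:harder-deligne-2}; the algebraicity assertion \eqref{eqn:harder-deligne-1} then follows by Galois descent.

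First I would pin down $\epsilon_F$. If $F$ admits no CM subfield then $F_1 = F_0$ is totally real, ${\bf n}$ is parallel by Prop.\,\ref{prop:exist_alg_hecke_tot_imag}\,(i), both CM-types $\Phi({\bf n},\iota)$ and $\Phi(\tilde{\bf n},\iota)$ collapse, and both sides are trivial; so assume $F_1 = F_0(\sqrt{D})$ is CM. The tower formula
$
\delta_{F/\Q} \ = \ N_{F_1/\Q}(\delta_{F/F_1}) \cdot \delta_{F_1/\Q}^{[F:F_1]},
$
combined with $\delta_{F_1/F_0} \equiv 4D \pmod{(F_0^\times)^2}$, yields $|\delta_{F/\Q}| \equiv |N_{F_0/\Q}(D)|^{[F:F_1]} \pmod{(\Q^\times)^2}$. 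On the other hand $\Delta_F^2 = N_{F_0/\Q}(D)^{[F:F_1]}$. Tracking the sign of $N_{F_0/\Q}(D)$, which is $(-1)^{[F_0:\Q]}$ since $D$ is totally negative, and comparing with the factor $\i^{d_F/2} = \i^{[F_0:\Q]\cdot [F:F_1]}$ (on recalling $d_F = 2[F_0:\Q]\cdot[F:F_1]$), shows that $\epsilon_F^2 \in \Q^\times$, so $\epsilon_F \in \bar\Q^\times$ is a square root of a rational number, and the quadratic extension $\Q(\epsilon_F)$ of $\Q$ is either $\Q$ itself (e.g.\ when $F$ is CM) or a single explicit quadratic field determined by $N_{F_0/\Q}(D)^{[F:F_1]}$ modulo squares.

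The heart of the argument is to identify $\varsigma(\epsilon_F)/\epsilon_F$ with the signature $\varepsilon_{{\bf n},\iota}(\varsigma)\cdot\varepsilon_{\tilde{\bf n},\iota}(\varsigma)$. By Prop.\,\ref{prop:descend-to-F_1} and the construction in \S\ref{sec:comb-lemma}, the CM-type $\Phi({\bf n},\iota) \subset \Sigma_F$ is the preimage under the restriction map $\Sigma_F \to \Sigma_{F_1}$ of a CM-type $\Phi_1 \subset \Sigma_{F_1}$, and similarly for $\Phi(\tilde{\bf n},\iota) = \bar\Phi_1$. A classical computation identifies the sign of the permutation of $\Sigma_{F_1}/\c$ induced by $\varsigma$ acting on $\Phi_1$ with $\varsigma(\sqrt{N_{F_0/\Q}(D)})/\sqrt{N_{F_0/\Q}(D)}$, i.e.\ with the Galois action on $\Delta_{F_1}$; the sign for $\bar\Phi_1$ is the same (as complex conjugation commutes with $\GalQ$ on $F_1$ by Lem.\,\ref{lem:CM-field-embeddings}), so raising to the $[F:F_1]$-th power and multiplying the two signatures yields $\varsigma(\Delta_F)/\Delta_F$ times an ambient rational sign; combined with the trivial Galois action on $|\delta_{F/\Q}|^{1/2}$ modulo $\Q^\times$, this is precisely $\varsigma(\epsilon_F)/\epsilon_F$.

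The main obstacle is this last bookkeeping: the combinatorial signature is defined in terms of permutations of the set of archimedean places $S_\infty$ of $F$ via explicit bijections $\beta_{{\bf n},\iota}$ and $\beta_{\tilde{\bf n},\iota}$, whereas the Galois action on $\epsilon_F$ naturally lives in the quadratic subextension generated by $\Delta_{F_1}$; fitting these together requires sorting out how a chosen ordering of $S_\infty$ interacts with the fibration $\Sigma_F \to \Sigma_{F_1}$ and the choice of representative CM-type. To anchor the sign, I would first verify the claim in the CM case, where both sides are trivial by the remark in \S\ref{sec:trivial-sign-CM}, then check the field $F = \Q(\i,\sqrt{4+\i})$ of Sect.\,\ref{sec:counterexample}, where both sides produce the nontrivial character $\varsigma \mapsto \varsigma(\sqrt{17})/\sqrt{17}$, and only then carry out the general descent via Prop.\,\ref{prop:descend-to-F_1}.
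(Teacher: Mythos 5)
Your overall reduction is the right one and coincides with the paper's: everything comes down to showing that $\epsilon_F=\i^{d_F/2}\Delta_F/|\delta_{F/\Q}|^{1/2}$ transforms under every $\varsigma$ by exactly $\varepsilon_{{\bf n},\iota}(\varsigma)\cdot\varepsilon_{\tilde{\bf n},\iota}(\varsigma)$; this is precisely Prop.~\ref{prop:equality-signatures}. But the identification you propose at the heart of the argument is wrong. You claim the signature product equals $\varsigma(\Delta_F)/\Delta_F$ up to an ambient rational sign; your own test case refutes this. For $F=\Q(\i,\sqrt{4+\i})$ one has $\Delta_F=\Delta_{F_1}^{[F:F_1]}=(\sqrt{-1})^2=-1\in\Q^\times$, so $\varsigma(\Delta_F)/\Delta_F=1$ for every $\varsigma$, whereas the signature product must realize the nontrivial character $\varsigma\mapsto\varsigma(\sqrt{17})/\sqrt{17}$ (as forced by the base-change computation of Sect.~\ref{sec:counterexample}). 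The point is that $\Delta_F$ only sees the CM subfield $F_1$ and the degree $[F:F_1]$; it cannot detect the relative discriminant $\delta_{F/F_1}$, which is where the nontrivial Galois action lives. The correct statement (Lem.~\ref{lem:equality-signs_tot_imag}) is
$$
\varepsilon_{{\bf n},\iota}(\varsigma)\cdot\varepsilon_{\tilde{\bf n},\iota}(\varsigma) \ = \ \varsigma\bigl(N_{F_1/\Q}(\delta_{F/F_1})^{1/2}\bigr)\big/N_{F_1/\Q}(\delta_{F/F_1})^{1/2},
$$
which combined with $|\delta_{F/\Q}|^{1/2}=\i^{d_F/2}\Delta_F\cdot N_{F_1/\Q}(\delta_{F/F_1})^{1/2}\pmod{\Q^\times}$ (Lem.~\ref{lem:compatibility_tot_imag}) gives the proposition. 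Relatedly, your appeal to ``the trivial Galois action on $|\delta_{F/\Q}|^{1/2}$ modulo $\Q^\times$'' gives away the game: every quantity in play ($\epsilon_F$, $|\delta_{F/\Q}|^{1/2}$, the signatures) transforms by $\pm1$, so working modulo $\Q^\times$ erases exactly the signs the proposition is about.

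The missing idea is the mechanism by which the two permutations of $S_\infty$ detect the relative discriminant. One writes $N_{F_1/\Q}(\delta_{F/F_1})^{1/2}$ as $\pm\det$ of a block-diagonal matrix whose blocks $A^\theta=[\rho_i^\theta(\omega_j)]$ are indexed by $\theta\in\Sigma_{F_1}$ and whose rows within a block are indexed by the fiber of $\Sigma_F\to\Sigma_{F_1}$ over $\theta$. Applying $\varsigma$ permutes the blocks --- with even net effect, since by Lem.~\ref{lem:CM-field-embeddings} the permutations of the two halves $\Psi$ and $\bar\Psi$ of $\Sigma_{F_1}$ coincide --- and also permutes the rows inside the blocks; the row-permutations over the blocks lying above $\Phi({\bf n},\iota)$ contribute $\varepsilon_{{\bf n},\iota}(\varsigma)$ and those above $\Phi(\tilde{\bf n},\iota)$ contribute $\varepsilon_{\tilde{\bf n},\iota}(\varsigma)$. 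Your plan of ``raising the $F_1$-level sign to the $[F:F_1]$-th power'' discards exactly this fiberwise information, which is the entire content of the non-CM case. (A smaller slip: $\Q(\epsilon_F)$ is determined by $N_{F_1/\Q}(\delta_{F/F_1})$ modulo squares, not by $N_{F_0/\Q}(D)^{[F:F_1]}$; in your test case the latter is $1$ while $\Q(\epsilon_F)=\Q(\sqrt{17})$.)
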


\bigskip

\subsubsection{\bf Thm.\,\ref{thm:main} $\implies$ Conj.\,\ref{con:deligne-hecke}}
This follows from the following: 

\begin{prop}
\label{prop:equality-signatures}
Let the notations and hypothesis be as in Thm.\,\ref{thm:main}. Then: 
$$
\frac{\varsigma(\i^{d_F/2} \Delta_F)}{\i^{d_F/2} \Delta_F}
\ = \ 
\frac{\varsigma(|\delta_{F/\Q}|^{1/2})}{|\delta_{F/\Q}|^{1/2})} \cdot 
\varepsilon_{{\bf n}, \iota}(\varsigma) \cdot 
\varepsilon_{\tilde{\bf n}, \iota}(\varsigma).
$$
\end{prop}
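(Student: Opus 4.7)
The plan is to establish the identity by explicit Galois-theoretic computation, noting at the outset that both sides are $\pm 1$ so that the claim reduces to matching two signs. First I would use the classical relation $\delta_{F/\Q} = (-1)^{d_F/2} |\delta_{F/\Q}|$ (valid because $F$ is totally imaginary with $r_2 = d_F/2$) to rewrite $|\delta_{F/\Q}|^{1/2} = \i^{-d_F/2}\sqrt{\delta_{F/\Q}}$. Choosing a $\Q$-basis $\{\omega_1,\ldots,\omega_{d_F}\}$ of $F$ and an ordering $\sigma_1,\ldots,\sigma_{d_F}$ of $\Sigma_F$, one has $\sqrt{\delta_{F/\Q}} = \det([\sigma_i(\omega_j)])$ for a canonical choice of square root; applying $\varsigma \in \Gal(\bar\Q/\Q)$ permutes rows by $\sigma \mapsto \varsigma \circ \sigma$, yielding
\[
\frac{\varsigma(|\delta_{F/\Q}|^{1/2})}{|\delta_{F/\Q}|^{1/2}} \ = \ \frac{\varsigma(\i^{-d_F/2})}{\i^{-d_F/2}} \cdot {\rm sgn}(\pi_\varsigma^{\Sigma_F}),
\]
where $\pi_\varsigma^{\Sigma_F}$ is the permutation of $\Sigma_F$ induced by $\varsigma$. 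Since $\varsigma(\i^{d_F/2})/\i^{d_F/2} = \varsigma(\i^{-d_F/2})/\i^{-d_F/2}$ in $\{\pm 1\}$, the $\i$-factors in the proposition cancel on both sides and the statement reduces to
\[
\frac{\varsigma(\Delta_F)}{\Delta_F} \ = \ {\rm sgn}(\pi_\varsigma^{\Sigma_F}) \cdot \varepsilon_{{\bf n},\iota}(\varsigma) \cdot \varepsilon_{\tilde{\bf n},\iota}(\varsigma). \qquad (\ast)
\]

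Next I would pass to the maximal CM subfield $F_1 = F_0(\sqrt{D})$ of $F$; the hypothesis $\ell \geq 2$ together with Prop.\,\ref{prop:exist_alg_hecke_tot_imag} forces $F_1 \neq F_0$, since otherwise ${\bf n}$ would be constant and $\ell = 0$. By Prop.\,\ref{prop:descend-to-F_1} the infinity type ${\bf n}$ is the base-change of an infinity type ${\bf m}$ on $F_1$, so the CM-types $\Phi({\bf n},\iota)$ and $\Phi(\tilde{\bf n},\iota)$ are the preimages in $\Sigma_F$ of CM-types $\Phi({\bf m},\iota)$ and $\Phi(\tilde{\bf m},\iota)$ of $F_1$ under the restriction $\Sigma_F \twoheadrightarrow \Sigma_{F_1}$, and the bijections $\beta_{{\bf n},\iota}$, $\beta_{\tilde{\bf n},\iota}$ factor compatibly through $S_\infty(F) \to S_\infty(F_1)$. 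The permutations $\pi_{{\bf n},\iota}(\varsigma)$ and $\pi_{\tilde{\bf n},\iota}(\varsigma)$ thereby split into a common base permutation of $S_\infty(F_1)$ composed with fibre-wise permutations over fibres of size $[F:F_1]$; because $\Phi({\bf n},\iota) \amalg \Phi(\tilde{\bf n},\iota) = \Sigma_F$, the within-fibre contributions to the product $\varepsilon_{{\bf n},\iota}(\varsigma)\varepsilon_{\tilde{\bf n},\iota}(\varsigma)$ exactly recover the within-fibre portion of ${\rm sgn}(\pi_\varsigma^{\Sigma_F})$. The identity $(\ast)$ thus reduces to the analogous statement over $F_1$, where the only remaining discrepancy is the sign of the base permutation on $S_\infty(F_1)$, and that sign is precisely $\varsigma(\Delta_{F_1}^{[F:F_1]})/\Delta_{F_1}^{[F:F_1]} = \varsigma(\Delta_F)/\Delta_F$ by a direct computation with $\Delta_{F_1}^2 = N_{F_0/\Q}(D)$ and the action of $\varsigma$ on $\{\sqrt{\tau(D)} : \tau \in \Sigma_{F_0}\}$.

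The main obstacle is making this within-fibre/between-fibre decomposition precise and verifying that the mismatch between $\varepsilon_{{\bf n},\iota}(\varsigma)\varepsilon_{\tilde{\bf n},\iota}(\varsigma)$ and ${\rm sgn}(\pi_\varsigma^{\Sigma_F})$ is exactly $\varsigma(\Delta_F)/\Delta_F$. The key inputs are Lem.\,\ref{lem:CM-field-embeddings} (complex conjugation is central in the Galois action on the CM subfield $F_1$, so the two CM-types for $F_1$ are interchanged by complex conjugation independently of $\varsigma$) and an explicit Vandermonde-style evaluation of $\sqrt{N_{F_0/\Q}(D)}$ whose Galois transform under $\varsigma$ records exactly the permutation sign of $\varsigma$ acting on the CM-type of $F_1$. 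Once this bookkeeping is carried out on $F_1$, raising to the $[F:F_1]$-th power propagates the identity back to $F$ and finishes the proof.
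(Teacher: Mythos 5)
Your reduction of the proposition to the identity $(\ast)$ is correct: writing $|\delta_{F/\Q}|^{1/2}=\i^{-d_F/2}\sqrt{\delta_{F/\Q}}$ and computing $\varsigma(\sqrt{\delta_{F/\Q}})/\sqrt{\delta_{F/\Q}}={\rm sgn}(\pi^{\Sigma_F}_\varsigma)$ by row permutation is fine, and tracking $\varsigma$ on $\sqrt{\delta_{F/\Q}}$ directly is a legitimate variant of the paper's route (the paper instead proves $|\delta_{F/\Q}|^{1/2}=\i^{d_F/2}\Delta_F\cdot N_{F_1/\Q}(\delta_{F/F_1})^{1/2}$ mod $\Q^\times$ and then computes $\varepsilon_{{\bf n},\iota}(\varsigma)\,\varepsilon_{\tilde{\bf n},\iota}(\varsigma)$ as the effect of $\varsigma$ on the block-diagonal determinant \eqref{eqn:norm-matrix} representing $N_{F_1/\Q}(\delta_{F/F_1})^{1/2}$).

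However, your identification of the ``remaining discrepancy'' is wrong as stated, and that is exactly where the content of $(\ast)$ lies. Write $\Psi=\Phi({\bf m},\iota)\subset\Sigma_{F_1}$ and $k=[F:F_1]$. The sign of the base permutation of $S_\infty(F_1)$ is $\varepsilon_{{\bf m},\iota}(\varsigma)$, and this quantity \emph{cancels} from both sides: it enters $\varepsilon_{{\bf n},\iota}(\varsigma)$ and $\varepsilon_{\tilde{\bf n},\iota}(\varsigma)$ each to the $k$-th power, hence contributes $\varepsilon_{{\bf m},\iota}(\varsigma)^{2k}=1$ to their product, while it enters ${\rm sgn}(\pi^{\Sigma_F}_\varsigma)$ only through ${\rm sgn}(\varsigma\circ -:\Sigma_{F_1}\to\Sigma_{F_1})^{k}$, and by Lem.\,\ref{lem:CM-field-embeddings} one has ${\rm sgn}(\varsigma\circ-:\Sigma_{F_1}\to\Sigma_{F_1})=\varepsilon_{{\bf m},\iota}(\varsigma)^{2}\cdot(-1)^{\#\{\theta\in\Psi\,:\,\varsigma\theta\in\bar\Psi\}}$. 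The true discrepancy is therefore $(-1)^{k\cdot\#\{\theta\in\Psi\,:\,\varsigma\theta\in\bar\Psi\}}$: it counts how many elements of the CM-type of $F_1$ are flipped to their conjugates by $\varsigma$ (equivalently, how $\varsigma$ moves the chosen square roots $\theta(\sqrt{D})$), not how $\varsigma$ permutes the places of $F_1$. This does equal $\varsigma(\Delta_F)/\Delta_F$, but for a different reason than the one you give; already for $F_1$ imaginary quadratic the permutation of $S_\infty(F_1)$ is trivial while $\varsigma(\Delta_{F_1})/\Delta_{F_1}=\varsigma(\sqrt{D})/\sqrt{D}$ is not. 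Separately, your assertion that the within-fibre contributions to $\varepsilon_{{\bf n},\iota}(\varsigma)\,\varepsilon_{\tilde{\bf n},\iota}(\varsigma)$ coincide with those of ${\rm sgn}(\pi^{\Sigma_F}_\varsigma)$ is precisely the nontrivial bookkeeping: one must check that the orderings of the fibres of $\Sigma_F\to\Sigma_{F_1}$ induced by the four bijections $\beta_{{\bf n},\iota},\beta_{\tilde{\bf n},\iota},\beta_{{\bf n},\varsigma\circ\iota},\beta_{\tilde{\bf n},\varsigma\circ\iota}$ all agree on each fibre. This is true for the canonical $\beta$'s (an embedding goes to its archimedean place), but you assert it rather than prove it, and it is what the paper's block-matrix computation in Lem.\,\ref{lem:equality-signs_tot_imag} keeps track of. With these two points repaired your argument goes through and gives an equivalent proof.
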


\begin{proof}
When $F$ is a CM-field, we have $\varepsilon_{{\bf n}, \iota}(\varsigma) \cdot 
\varepsilon_{\tilde{\bf n}, \iota}(\varsigma) = 1$ from \ref{sec:trivial-sign-CM}; the proof will follow from the following lemma: 

\begin{lemma}
\label{lem:compatibility_CM_field}
Let $F$ be a CM field. Then, as elements of $\C^\times/\Q^\times$, we have: 
$$
|\delta_{F/\Q}|^{1/2} \ = \ 
\i^{d_F/2} \cdot \Delta_F. 
$$
\end{lemma}

\begin{proof}[Proof of Lem.\,\ref{lem:compatibility_CM_field}]
Let $F_0$ be the maximal totally real subfield of $F;$ suppose $F = F_0(\sqrt{D})$, for a totally 
negative $D \in F_0;$ then $\Delta_F = \sqrt{N_{F_0/\Q}(D)}.$
For the absolute discriminant of $F$, using the well-known formula for the discriminant of a tower of fields, we have:
$$
\delta_{F/\Q} \ = \ \delta_{F_0/\Q}^{[F:F_0]} \cdot N_{F_0/\Q}(\delta_{F/F_0}) \ = \ 
\delta_{F_0/\Q}^2 \cdot N_{F_0/\Q}(\delta_{F/F_0}).
$$
Using the basis $\{1, \sqrt{D} \}$ we get the relative discriminant of $F/F_0$ as $\delta_{F/F_0} = 4D.$ Hence, taking square-root 
of the absolute value of $\delta_{F/\Q}$ we get: 
$$
\sqrt{|\delta_{F/\Q}|} \ = \ \sqrt{|N_{F_0/\Q}(D)|} \pmod{\Q^\times}.
$$
Since $D$ is totally negative, the sign of $N_{F_0/\Q}(D)$ is the same as $(-1)^{[F_0:\Q]} = (-1)^{d_F/2},$ i.e., 
$|N_{F_0/\Q}(D)| = (-1)^{d_F/2} N_{F_0/\Q}(D).$ Hence: 
$$
|\delta_{F/\Q}|^{1/2} \ = \ \sqrt{(-1)^{d_F/2}N_{F_0/\Q}(D)} 
 \ = \ \i^{d_F/2} \sqrt{N_{F_0/\Q}(D)} 
 \ = \ \i^{d_F/2} \Delta_F 
\pmod{\Q^\times}.
$$
\end{proof}

\noindent
Suppose now that $F$ is a totally imaginary field; the hypothesis of Thm.\,\ref{thm:main} guarantees that $F$ contains a CM subfield $F_1$ and its maximal 
totally real subfield $F_0.$ Lem.\,\ref{lem:compatibility_CM_field} generalizes to the following lemma: 

\medskip
\begin{lemma}
\label{lem:compatibility_tot_imag}
Let $F$ be a totally imaginary field, and $F_1$ its maximal CM subfield. 
Then, as elements of $\C^\times/\Q^\times$, we have: 
$$
|\delta_{F/\Q}|^{1/2} \ = \ 
\i^{d_F/2} \cdot \Delta_F \cdot \left(N_{F_1/\Q}(\delta_{F/F_1})\right)^{1/2}.
$$
\end{lemma}

\begin{proof}[Proof of Lem.\,\ref{lem:compatibility_tot_imag}]
Begin by noting that 
$\delta_{F/\Q} \ = \ \delta_{F_0/\Q}^{[F:F_0]} \cdot N_{F_0/\Q}(\delta_{F/F_0}),$ and since $[F:F_0] = 2[F:F_1]$ is even, we get 
$$
|\delta_{F/\Q}|^{1/2} \ = \ |N_{F_0/\Q}(\delta_{F/F_0})|^{1/2} \pmod{\Q^\times}.
$$
Next, since $\delta_{F/F_0} \ = \ \delta_{F_1/F_0}^{[F:F_1]} \cdot N_{F_1/F_0}(\delta_{F/F_1});$ using the $F_0$-basis $\{1, \sqrt{D}\}$ for 
$F_1$, one has $\delta_{F_1/F_0} = 4D;$ therefore, 
\begin{multline*}
N_{F_0/\Q}(\delta_{F/F_0}) \ = \ N_{F_0/\Q}(4D)^{[F:F_1]} \cdot N_{F_0/\Q}(N_{F_1/F_0}(\delta_{F/F_1})) \\ 
= \ N_{F_0/\Q}(D)^{[F:F_1]} \cdot N_{F_1/\Q}(\delta_{F/F_1}) \pmod{\Q^\times{}^2}.
\end{multline*}
Since $F_1/\Q$ is a CM-extension, $N_{F_1/\Q}(\delta_{F/F_1}) > 0$; hence, we get
$$
|\delta_{F/\Q}|^{1/2} \ = \  |N_{F_0/\Q}(\delta_{F/F_0})|^{1/2} \ = \ 
|N_{F_0/\Q}(D)|^{[F:F_1]/2} \cdot \left(N_{F_1/\Q}(\delta_{F/F_1})\right)^{1/2} \pmod{\Q^\times}. 
$$
Since $D \ll 0$ in $F_0$, we see that $(-1)^{[F_0:\Q]}N_{F_0/\Q}(D) > 0$. Hence, 
\begin{multline*}
|N_{F_0/\Q}(D)|^{[F:F_1]/2} \ = \ ((-1)^{[F_0:\Q]}N_{F_0/\Q}(D))^{[F:F_1]/2} \ = \ 
(\i^{[F_0:\Q]} \Delta_{F_1})^{[F:F_1]} \\ 
= \ \i^{[F_0:\Q][F:F_1]} \Delta_F \ = \ \i^{d_F/2} \Delta_F.
\end{multline*}
\end{proof}

\medskip
\begin{lemma}
\label{lem:equality-signs_tot_imag}
With notations as in Lem.\,\ref{lem:compatibility_tot_imag} and Thm.\,\ref{thm:main}, we have the equality of signatures:
$$
\varepsilon_{{\bf n}, \iota}(\varsigma) \cdot 
\varepsilon_{\tilde{\bf n}, \iota}(\varsigma) \ = \ 
\frac{\varsigma\left(N_{F_1/\Q}(\delta_{F/F_1})^{1/2}\right)}
{N_{F_1/\Q}(\delta_{F/F_1})^{1/2}}.
$$
\end{lemma}

\begin{proof}[Proof of Lem.\,\ref{lem:equality-signs_tot_imag}]
For the left hand side, 
recall that $\varepsilon_{{\bf n}, \iota}(\varsigma)$ (resp., $\varepsilon_{\tilde{\bf n}, \iota}(\varsigma)$) 
is the sign of the permutation $\pi_{{\bf n}, \iota}(\varsigma)$ (resp., $\pi_{\tilde{\bf n}, \iota}(\varsigma)$)
induced by the map 
$\varsigma \circ - : \Phi_{{\bf n}, \iota} \to \Phi_{{\bf n}, \varsigma \circ \iota}$ 
(resp., $\Phi_{\tilde{\bf n}, \iota} \to \Phi_{\tilde{\bf n}, \varsigma \circ \iota}$). For brevity, let $\Phi = \Phi_{{\bf n}, \iota}$, then 
$\bar\Phi = \Phi_{\tilde{\bf n}, \iota}$. Recall from Prop.\,\ref{prop:descend-to-F_1} that ${\bf n}$ is the base-change of ${\bf m}$, hence
the CM-type $\Psi = \Phi_{{\bf m}, \iota}$ for the CM-field $F_1$ has the property that $\Phi \to \Psi$ under the restriction map; also 
$\bar\Phi \to \bar\Psi.$ One has the following relation with respect to conjugating by a Galois element $\varsigma$: 
$$
\xymatrix{
\Phi \cup \bar\Phi \ar[rr]^{\varsigma \circ -} \ar[d]& & \varsigma(\Phi) \cup \varsigma(\bar\Phi) \ar[d] \\
\Psi \cup \bar\Psi \ar[rr]^{\varsigma \circ -} & & \varsigma(\Psi) \cup \varsigma(\bar\Psi)
}
$$
Fixing an ordering on $S_\infty(F_1)$, the bottom row involves two permutations corresponding to 
$\Psi \to \varsigma(\Psi)$ and  $\bar\Psi \to  \varsigma(\bar\Psi)$, but from 
Lem.\,\ref{lem:CM-field-embeddings} it follows that they are equal, hence $\varepsilon_{{\bf m}, \iota}(\varsigma) = \varepsilon_{\tilde{\bf m}, \iota}(\varsigma).$
However, the two permutations and their signatures corresponding to $\Phi \to \varsigma(\Phi)$ and  $\bar\Phi \to \varsigma(\bar\Phi)$ need not be equal.

\smallskip
For the right hand side: let $\Sigma_{F_1} = \{\theta_1,\dots, \theta_{d_1}\};$ $d_1 = [F_1:\Q];$ then for any $x \in F_1^\times$ one has
$N_{F_1/\Q} = \prod_{j=1}^{d_1} \theta_j(x) > 0.$ Also, suppose $\{\rho_1,\dots,\rho_k\}$ denotes all embeddings of $F$ into $\bar{F_1}$ over $F_1,$
and $\{\omega_1,\dots,\omega_k\}$ is an $F_1$-basis for $F$, then $\delta_{F/F_1} = \det[\rho_i(\omega_j)]^2.$ Putting together, we get
$$
N_{F_1/\Q}(\delta_{F/F_1}) \ = \ \prod_{\theta \in \Sigma_{F_1}} \theta(\det[\rho_i(\omega_j)]^2) \ = \ 
\prod_{\theta \in \Sigma_{F_1}} \det[\rho_i^\theta(\omega_j)]^2,
$$
where $\{\rho_1^\theta, \dots, \rho_k^\theta\}$ is the set of all embeddings of $F$ into $\bar{\Q}$ that restrict to $\theta : F_1 \to \bar{\Q}.$ Hence, 
\begin{equation}
\label{eqn:norm-matrix}
N_{F_1/\Q}(\delta_{F/F_1})^{1/2} \ = \ 
\pm \det
\left[\begin{array}{cccc}
[\rho_i^{\theta_1}(\omega_j)] & & & \\
 & [\rho_i^{\theta_2}(\omega_j)] & & \\
 & & \ddots & \\
 & & & [\rho_i^{\theta_{d_1}}(\omega_j)]
\end{array}\right],
\end{equation}
where the appropriate sign $\pm$ is chosen to make the right hand side positive. Denote by $A^{\theta}$ the $k \times k$ block $[\rho_i^{\theta}(\omega_j)]$. 
To the above equation apply a Galois element $\varsigma$. To see how the determinant changes, note that 
then the blocks along 
the diagonal are permuted according to how $\varsigma$ permutes $\Sigma_{F_1}$ which is the same as $\Psi \to \varsigma(\Psi)$ and  $\bar\Psi \to  \varsigma(\bar\Psi),$ 
and since $\varepsilon_{{\bf m}, \iota}(\varsigma) = \varepsilon_{\tilde{\bf m}, \iota}(\varsigma).$ Just a permutation of the blocks of a block diagonal matrix does not change the determinant. 
Next, if $\theta \mapsto \varsigma\theta$ then the rows of the block 
$A^{\theta}$ are permuted to get the rows of $A^{\varsigma\theta}.$ Fix an ordering on $S_\infty(F)$ and correspondingly suppose $\Phi = \{\theta_1,\dots, \theta_{r_1}\}$,  
where $r_1 = d_1/2.$ The blocks of the block-diagonal matrix ${\rm diag}(A^{\theta_1},\dots, A^{\theta_{r_1}})$ (which is `half' the matrix in \eqref{eqn:norm-matrix}) 
get permuted to give ${\rm diag}(A^{\varsigma\theta_1},\dots, A^{\varsigma\theta_{r_1}})$, and within the blocks the net effect of the permutation is exactly the permutation 
$\pi_{{\bf n}, \iota}(\varsigma)$ from Sec.\,\ref{sec:two-signatures}, and so the determinant changes by the signature 
$\varepsilon_{{\bf n}, \iota}(\varsigma)$. Similarly, working with the `other-half' ${\rm diag}(A^{\bar\theta_1},\dots, A^{\bar\theta_{r_1}})$ we get the signature 
$\varepsilon_{\tilde{\bf n}, \iota}(\varsigma).$ Hence, the effect of applying $\varsigma$ to the matrix in \eqref{eqn:norm-matrix} changes its determinant by 
$\varepsilon_{{\bf n}, \iota}(\varsigma)\varepsilon_{\tilde{\bf n}, \iota}(\varsigma).$ 
\end{proof}

\smallskip
Lem.\,\ref{lem:compatibility_tot_imag} and 
Lem.\,\ref{lem:equality-signs_tot_imag} prove Prop.\,\ref{prop:equality-signatures}. 
\end{proof}

\bigskip
\subsubsection{\bf Some final remarks}

Using Prop.\,\ref{prop:equality-signatures} the result in Thm.\,\ref{thm:main} 
may be restated as the following theorem: 

\begin{thm}
\label{thm:main-restated} 
Let the notations and hypothesis be as in Thm.\,\ref{thm:main}. 
Then: 
$$
\i^{d_F/2} \Delta_F \, \frac{L(m, {}^\iota\chi)}{L(m+1, {}^\iota\chi)} \ \in \iota(E), 
$$
and, furthermore, for every $\varsigma \in \Gal(\bar\Q/\Q)$ we have the reciprocity law: 
$$
\varsigma \left(\i^{d_F/2} \Delta_F \, \frac{L(m, {}^\iota\chi)}{L(m+1, {}^\iota\chi)}\right)
\ \ = \ \ 
\i^{d_F/2} \Delta_F \, 
\frac{L(m, {}^{\varsigma\circ\iota}\chi)}{L(m+1, {}^{\varsigma\circ\iota}\chi)}.
$$
\end{thm}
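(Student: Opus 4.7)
\textbf{Proof proposal for Theorem \ref{thm:main-restated}.}

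The plan is to deduce Theorem \ref{thm:main-restated} from Theorem \ref{thm:main} by absorbing the signature $\varepsilon_{{\bf n}, \iota}(\varsigma)\varepsilon_{\tilde{\bf n}, \iota}(\varsigma)$ into a change of normalizing factor from $|\delta_{F/\Q}|^{1/2}$ to $\i^{d_F/2}\Delta_F$. The bridge between the two normalizations is precisely Proposition \ref{prop:equality-signatures}, so once that is in hand the argument is essentially a one-line computation.

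\emph{First step: the reciprocity law.} Let $R_\iota := \frac{L(m,{}^\iota\chi)}{L(m+1,{}^\iota\chi)}$ for brevity. Theorem \ref{thm:main} gives
$$
\varsigma\bigl(|\delta_{F/\Q}|^{1/2}\, R_\iota\bigr)\ =\ \varepsilon_{{\bf n}, \iota}(\varsigma)\,\varepsilon_{\tilde{\bf n}, \iota}(\varsigma)\cdot |\delta_{F/\Q}|^{1/2}\, R_{\varsigma\circ\iota}.
$$
Writing $\i^{d_F/2}\Delta_F\, R_\iota = \frac{\i^{d_F/2}\Delta_F}{|\delta_{F/\Q}|^{1/2}}\cdot |\delta_{F/\Q}|^{1/2}\, R_\iota$ and applying $\varsigma$, we obtain
$$
\varsigma\bigl(\i^{d_F/2}\Delta_F\, R_\iota\bigr)\ =\ \frac{\varsigma(\i^{d_F/2}\Delta_F)}{\varsigma(|\delta_{F/\Q}|^{1/2})}\cdot \varepsilon_{{\bf n}, \iota}(\varsigma)\,\varepsilon_{\tilde{\bf n}, \iota}(\varsigma)\cdot |\delta_{F/\Q}|^{1/2}\, R_{\varsigma\circ\iota}.
$$
By Proposition \ref{prop:equality-signatures} one has $\frac{\varsigma(\i^{d_F/2}\Delta_F)}{\varsigma(|\delta_{F/\Q}|^{1/2})} = \frac{\i^{d_F/2}\Delta_F}{|\delta_{F/\Q}|^{1/2}}\cdot \varepsilon_{{\bf n}, \iota}(\varsigma)\,\varepsilon_{\tilde{\bf n}, \iota}(\varsigma)$. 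Substituting this in and using that the two signatures are $\pm 1$ so their product squares to $1$, the signatures cancel and we get exactly
$$
\varsigma\bigl(\i^{d_F/2}\Delta_F\, R_\iota\bigr)\ =\ \i^{d_F/2}\Delta_F\, R_{\varsigma\circ\iota},
$$
which is the desired reciprocity law.

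\emph{Second step: algebraicity in $\iota(E)$.} The quantity $\i^{d_F/2}\Delta_F\, R_\iota$ lies in $\bar\Q$ since it equals $|\delta_{F/\Q}|^{1/2}\, R_\iota \in \iota(E)$ (by \eqref{eqn:harder-variation-1}) times the algebraic ratio $\i^{d_F/2}\Delta_F/|\delta_{F/\Q}|^{1/2}$. If $\varsigma \in \Gal(\bar\Q/\iota(E))$, then $\varsigma\circ\iota = \iota$ as embeddings of $E$, so $R_{\varsigma\circ\iota}=R_\iota$, and the reciprocity law above shows $\varsigma$ fixes $\i^{d_F/2}\Delta_F\, R_\iota$. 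Since this holds for every $\varsigma$ fixing $\iota(E)$, the quantity belongs to $\iota(E)$.

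\emph{Expected obstacle.} In this reformulation all the heavy lifting has already been carried out earlier in the paper: Theorem \ref{thm:main} provides the corrected reciprocity (coming from the Eisenstein-cohomology setup of \cite{harder}), and Proposition \ref{prop:equality-signatures}---itself proved via the discriminant identities of Lemmas \ref{lem:compatibility_tot_imag} and \ref{lem:equality-signs_tot_imag}---matches the signature $\varepsilon_{{\bf n}, \iota}(\varsigma)\varepsilon_{\tilde{\bf n}, \iota}(\varsigma)$ with the Galois action on the normalizing constant. Therefore, given those inputs, the only real task is the algebraic bookkeeping carried out above, and there is no substantive obstacle remaining; the proof of Theorem \ref{thm:main-restated} is effectively a corollary of Theorem \ref{thm:main} and Proposition \ref{prop:equality-signatures}.
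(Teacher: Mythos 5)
Your proposal is correct and is exactly the paper's route: the paper presents Thm.\,\ref{thm:main-restated} as an immediate restatement of Thm.\,\ref{thm:main} via Prop.\,\ref{prop:equality-signatures}, and your two steps (cancelling the signature using $(\varepsilon_{{\bf n},\iota}(\varsigma)\varepsilon_{\tilde{\bf n},\iota}(\varsigma))^2=1$, then Galois descent for membership in $\iota(E)$) just make that bookkeeping explicit.
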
 

\medskip
We conclude this article by drawing attention to following piquant detail in the special case when $F$ is a CM field. As before, $F_0$ is its maximal totally real subfield. Suppose $\omega_{F/F_0}$ is the quadratic Hecke character of $F_0$ associated to $F/F_0$ by class field theory, and let 
$\G(\omega_{F/F_0})$ denote the associated quadratic Gau\ss\ sum. Then 
\begin{equation}
\label{eqn:piquant} 
|\delta_{F/\Q}|^{1/2} \ = \ \i^{d_F/2} \, \Delta_F \ = \ \i^{d_F/2} \, \G(\omega_{F/F_0})  \pmod{\Q^\times}; 
\end{equation}
the first congruence is exactly Lem.\,\ref{lem:compatibility_CM_field}, and the second congruence is well-known (see Prop.\,33 of \cite{raghuram-CM}). Such a Gau\ss\ sum naturally appears if we were to use automorphic induction and study the $L$-function of a Hecke character of the CM field $F$ as the $L$-function of a Hilbert modular form over $F_0$; see  
\cite[Sect.\,5.5]{raghuram-tot-imag}. 
More generally, as shown in \cite{raghuram-CM}, the Gau\ss\ sum naturally appears when we study the standard $L$-functions 
for $\GL(n)/F$ via automorphic induction as the standard $L$-functions of $\GL(2n)/F_0$.  

\medskip

\begin{small}
\bibliographystyle{plain}

\end{small}

\bigskip

\end{document}